\documentclass[11pt,twoside,reqno]{amsart}

\usepackage{amssymb}
\usepackage{xcolor}
\usepackage[final]{hyperref}

\hypersetup{unicode= false, colorlinks=true, linkcolor=blue,
anchorcolor=blue, citecolor=green, filecolor=red, menucolor=blue, urlcolor=blue}

\def\ti{\tilde}

\def\PW{\text{\rm PW}}

\def\to{\rightarrow}

\def\pa{\partial}

\def \nlhat {\overset{\ \curlywedge}}

\def\R{{\mathbb R}}

\def\Z{{\mathbb{Z}}}
\def\C{{\mathbb{C}}}

\def\RR{{\mathcal{R}}}

\def\DD{{\mathcal{D}}}

\def\MM{{\mathcal M}}

\def\SS{{\mathcal S}}

\def\EE{{\mathcal E}}

\def\e{\varepsilon}
\def\d{\delta}
\def\DD{\Delta}

\def\l{\lambda}
\def\g{\gamma}
\def\G{\Gamma}
\def\a{\alpha}
\def\b{\beta}
\def\s{\sigma}

\newcommand{\charf}{\raisebox{\depth}{\(\chi\)}}

\def\arctanh{\operatorname{arctanh}}
\def\uv{\underline{v}_{\d}}
\def\S{\text{\rm Sinc}}

\theoremstyle{plain}
\newtheorem{lemma}{Lemma}
\newtheorem{theorem}{Theorem}
\newtheorem{corollary}{Corollary}

\newtheorem{proposition}{Proposition}

\newtheorem{remark}{Remark}

\numberwithin{equation}{section}

\author{A.~Poltoratski}
\address{University of Wisconsin\\ Department of Mathematics\\ Van Vleck Hall\\
480 Lincoln Drive\\
Madison, WI  53706\\ USA }
\email{poltoratski@wisc.edu}
\thanks{The author was partially supported by
NSF Grant DMS-2244801.}

\begin{document}

\begin{abstract} We discuss estimates for the maximal operator associated with the non-linear Fourier transform of an $L^2$-function on the half-line. For potentials with bounded dyadic $L^1$ masses we  prove weak-type maximal and maximal fluctuation estimates on the sets where the spectral densities are bounded away from zero and three classical maximal functions of the spectral data are uniformly small; such sets exhaust almost all of the real line as the parameters of their definition relax.
\end{abstract}

\title{A maximal estimate for the non-linear Fourier transform}


\maketitle

\section{Introduction}

We study one of the basic models of scattering corresponding to the 'real' Dirac system on the right half-line $\R_+$,
\begin{equation} \Omega \dot X =z X - QX,
	\label{eqDS}\end{equation}
where $z\in\C$ is a spectral parameter,
$$ \Omega=\begin{pmatrix} 0 & 1 \\ -1 & 0 \end{pmatrix},\textrm{ and  }Q(t)=\begin{pmatrix} 0 & f(t) \\ f(t) & 0 \end{pmatrix} $$
for some real-valued locally summable function $f$. A slightly more general form of the system allows
for a locally summable functions $g$ and $-g$ on the main diagonal of $Q$. The function $f+ig$
is then called the potential of the system.
To simplify our exposition, we keep the potential real, although our methods will work similarly
for the general potential.

We will be most interested in  the scattering
problems corresponding to the case $f\in L^2(\R_+)$. For each value of the spectral parameter $z$ the unknown function $$X(t,z)=\begin{pmatrix} u(t,z) \\ v(t,z) \end{pmatrix}$$
is assumed to be differentiable on $\R_+$  with respect to the time variable $t$ and satisfy an initial condition  $X(0,z)=c\in \R^2$.

A special role will be played by solutions  satisfying the Neumann, $$X(0,z)=\begin{pmatrix} 1 \\ 0 \end{pmatrix},$$
and Dirichlet, $$X(0,z)=\begin{pmatrix} 0 \\ 1 \end{pmatrix},$$ initial conditions.
The matrix function $M$ whose columns are the Neumann and Dirichlet solutions, i.e., the matrix-function which solves
\eqref{eqDS} with the initial condition $$M(0,z)=\begin{pmatrix} 1 & 0 \\ 0 & 1 \end{pmatrix},$$ is called
the fundamental matrix, or the transfer matrix,  of the system.

If $$M(t,z)=\begin{pmatrix} A(t,z) & B(t,z) \\ C(t,z) & D(t,z) \end{pmatrix}$$ is the fundamental matrix then the Hermite-Biehler functions of the system are defined as
$$E(t,z)=A(t,z)-iC(t,z)\textrm{ and }\ti E(t,z)=B(t,z)-iD(t,z).$$

We denote by $\Pi$ the Poisson measure on $\R$, $d\Pi(x)=dx/(1+x^2)$, and call a measure $\mu$ on $\R$ Poisson-finite if
$$\int\frac{d|\mu|(x)}{1+x^2}<\infty.$$
The family of de Branges spaces $B(E(t,\cdot)),\ t\in\R_+,$ possesses a unique positive Poisson-finite measure $\mu$ on $\R$ such that the embedding $B(E(t,\cdot))\to L^2(\mu)$ is isometric for all $t\in\R_+$. Similarly, the family $B(\ti E(t,\cdot))$ gives rise to a unique measure $\ti\mu$. The measures $\mu$ and $\ti\mu$ are called the spectral measures for the Dirac system \eqref{eqDS} corresponding to the Neumann and Dirichlet initial conditions at $0$ correspondingly. See \cite{dBr}, \cite{R} or \cite{MIF1} on the basics of spectral theory for canonical systems and \cite{Den} or \cite{Ro} on the reduction of Dirac systems to the canonical case.

Let $w(x)$ be the density of the absolutely continuous part of $\mu$, $d\mu_{ac}=w(x)dx$, and let $\ti w$ be the density of the absolutely continuous part of $\ti\mu$. For $f\in L^2(\R_+)$ the spectral measures satisfy the Szeg\"o condition
$$\log |w|, \ \log |\ti w|\in L^1(\Pi),$$
see the paper by Denisov \cite{Den} for this and many related results. In particular, $w,\ti w \neq 0$ a.e. on $\R$.

For $f\in L^2(\R_+)$ the system \eqref{eqDS} defines, for each $t>0$, the scattering coefficients $a(t,z)$ and $b(t,z)$, entire in $z$ and satisfying $|a|^2-|b|^2\equiv1$ on $\R$ (see Section 2 for the definitions); the ratio
$$\nlhat f_t\,(s)=\frac{b(t,s)}{a(t,s)},\qquad s\in\R,$$
is the non-linear Fourier transform of the truncated potential $f\charf_{(0,t)}$. The map $f\mapsto\nlhat f_t$ shares the basic structural properties of the linear Fourier transform --- the modulation and scaling symmetries, analyticity on half-lines, an analogue of the Riemann--Lebesgue lemma --- with the Plancherel theorem replaced by the non-linear Parseval identity
$$\int_\R\log|a(t,s)|\,ds\ =\ \frac\pi2\int_0^t f^2(\tau)\,d\tau .$$
We recall these facts in Section 2; for a detailed introduction to non-linear Fourier analysis see \cite{TT}, and for the closely related theory of Krein systems see \cite{Den}.

A basic open problem of the theory asks whether $\nlhat f_t\,(s)$ converges as $t\to\infty$ for almost every $s\in\R$ --- the non-linear analogue of Carleson's theorem \cite{C} on the almost everywhere convergence of Fourier series and integrals. The problem was raised by Muscalu, Tao and Thiele, who proved the corresponding statement for a Cantor group model of the transform \cite{MTT} and pointed out that, as in the linear case, the convergence should follow from a weak-type estimate for the associated maximal operator. For potentials in $L^p(\R_+)$, $1\leq p<2$, convergence follows from the work of Christ and Kiselev \cite{CK,CK1}, whose methods, however, do not extend to $p=2$ considered in this note. 

In \cite{Scatter} the pointwise behavior of the scattering data was studied through the motion of {\it resonances} --- the zeros of the Hermite--Biehler functions of the system in boxes of the natural spectral resolution $1/t$ --- and the a.e. convergence of the data was established along suitable sequences of times. The estimates of \cite{Scatter} are qualitative: the universality theorems underlying them carry no modulus of convergence. The present paper develops a quantitative version of that approach and proves a maximal estimate.

We consider potentials satisfying the {\it dyadic mass restriction}
$$M\ :=\ \sup_{n\geq0}\int_{2^n}^{2^{n+1}}|f(t)|\,dt\ <\ \infty,$$
a condition on the behavior of $f$ near infinity only (Section \ref{secMax}), and write $M'=\max\left(M,\int_0^1|f|\right)$. Let $w$ and $\ti w$ denote the densities of the absolutely continuous parts of the Neumann and Dirichlet spectral measures of the system, defined below, and let
$$A(s)\ =\ \log\left(\frac12\sqrt{\frac1{w(s)}+\frac1{\ti w(s)}+2}\right).$$
Our main result, Theorem \ref{thmMaxR} in Section \ref{secMax}, is a weak-type maximal fluctuation estimate. For every $\eta_0\in(0,\tfrac12]$ and $\lambda>0$ we define a family of {\it regular sets} $\RR(T,\eta_0,\lambda)\subset\{\min(w,\ti w)\geq\eta_0\}$, $T\geq1$, through the smallness of three classical maximal functions of the spectral data; the sets increase in $T$ and, for every fixed $(\eta_0,\lambda)$, exhaust almost all of $\{\min(w,\ti w)\geq\eta_0\}$ as $T\to\infty$. The theorem states that
$$\begin{gathered}\left|\left\{s\in\RR(T,\eta_0,\lambda):\ \sup_{t\geq T}\big|\log|a(t,s)|-A(s)\big|>\lambda\right\}\right|\ \leq\\ \leq\ K\,(M'+1)\left(\eta_0\min(\lambda,1)\right)^{-N}\,\frac{\|f\|^2_{L^2((T/32,\infty))}}{\lambda}\end{gathered}$$
with absolute constants $K$ and $N$. As $T\to\infty$ the right-hand side tends to zero, and we obtain the almost everywhere convergence
$$\log|a(t,s)|\ \longrightarrow\ A(s)\qquad(t\to\infty)$$
(Corollary \ref{corConv}); through the identity $|\nlhat f_t|^2=1-|a|^{-2}$ this identifies the a.e. limit of $|\nlhat f_t\,|$ --- the modulus layer of the convergence problem. The restriction to the modulus seems to be essential: the phases $\arg a(t,s)$ may diverge everywhere, while the ratio $\nlhat f_t=a/b$ converges, see\cite{Den2}.


The paper is organized as follows. Section 2 collects the preliminaries on Dirac systems and their scattering data. Section \ref{secU} introduces the maximal functions, the universality conditions and the regular sets. Section \ref{secRep} contains the representations of the Hermite--Biehler functions at the resonances, and Section \ref{secPMT} the scattering coefficients of a time interval. Section \ref{secTE} is devoted to the two-endpoint estimate, and Section \ref{secMax} to the maximal estimates and the convergence corollary. Appendix \ref{secApp} derives the kernel approximation (U2) from the maximal conditions.

\section{Preliminaries}

Here and throughout the paper, for an entire function $F$ we denote by $F^\#$ the entire function
$$F^\#(z)=\overline{F(\bar z)}.$$

The Hermite-Biehler functions of the system satisfy the differential equation
\begin{equation} \frac \pa {\pa  t} E(t,z)=-izE(t,z) + f (t)E^\# (t,z),\label{eqME}\end{equation}
and $\ti E$ satisfies the same equation. Since the entries $A,B,C,D$ of the fundamental matrix are real entire functions and $E=A-iC$, $\ti E=B-iD$, we have $E^\#=A+iC$ and $\ti E^\#=B+iD$.

It follows from \eqref{eqDS} that
\begin{equation}\det M(t,z)= 1\label{det=1}\end{equation}
for all $t$ and $z$.
Rewritten in terms of $E$ and $\ti E$, this relation becomes
\begin{equation}\det\begin{pmatrix} E &  \ti E \\ E^\# & \ti E^\#
	\end{pmatrix}\equiv 2i.\label{eqDet2i}\end{equation}

On the real line, where $E^\#=\bar E$ and $\ti E^\#=\bar{\ti E}$, \eqref{eqDet2i} becomes
\begin{equation}\Im \left(E\bar{\ti E}\right)\equiv 1\ \text{ on }\R.\label{eqImE}\end{equation}

The reproducing kernels of the Paley-Wiener spaces are constant multiples of sinc functions:
$$ \S(t,w,z)=\frac1\pi\frac{\sin t(z-\bar w)}{z-\bar w},$$
and for the $L^2$-norms we have
$$ ||\S(t,w,\cdot)||_2^2= \S(t,w,w)=\frac {\sinh 2ty}{2\pi y},\ y=\Im w.$$

Further, for each $t\geq 0$ define entire functions $a(t,z)$ and $b(t,z)$ as
\begin{equation}\begin{gathered}a(t,z)=\frac {\EE(t,z) +i\ti\EE(t,z)}2=\frac {e^{itz}}2 (E(t,z)+i\ti E(t,z)), \\  b(t,z)=
		\frac {\EE(t,z) -i\ti\EE(t,z)}2=\frac {e^{itz}}2 (E(t,z)-i\ti E(t,z)),\label{eqab}\end{gathered}\end{equation}
where $\EE(t,z)=e^{itz}E(t,z)$ and $\ti\EE(t,z)=e^{itz}\ti E(t,z)$.
(Note that our notations are slightly different from those in \cite{TT} where $a$ stands for $a^\#$ in our definitions.)

Together with \eqref{eqImE}, the definitions \eqref{eqab} imply
$$|a|^2-|b|^2\equiv 1\ \text{ on }\R.$$
In particular, $|a|\geq 1$ on $\R$. For $t>0$ we denote by $f_t=f\charf_{(0,t)}$ the truncation of the potential and define its non-linear Fourier transform as
$$\nlhat f_t\,(s)=\frac{b(t,s)}{a(t,s)},\qquad s\in\R;$$
by the last identity, $|\nlhat f_t|<1$ on $\R$.

We will also use the following exact consequence of \eqref{eqab} and \eqref{eqImE}, the {\it elevation identity}: for all $t>0$ and $s\in\R$,
\begin{equation}
|a(t,s)|^2=\frac{|E(t,s)|^2+|\ti E(t,s)|^2+2}4 .
\label{eqAid}
\end{equation}
Indeed, $4|a|^2=|E+i\ti E|^2=|E|^2+|\ti E|^2+2\Im\left(E\bar{\ti E}\right)$ on $\R$, and the last term equals $2$ by \eqref{eqImE}.

We will denote by $\PW_t$, $t>0$, the Paley-Wiener space, i.e. the space of all entire functions of exponential type at most $t$ whose restrictions to $\R$ belong to $L^2(\R)$; equivalently, $\PW_t$ is the space of Fourier transforms of functions from $L^2(-t,t)$. An entire function $F$ belongs to $\PW_t$ iff $e^{itz}F,\ e^{itz}F^\#$ belong to $H^2(\C_+)$.

Recall that an entire function $F$ is called a Hermite-Biehler (HB) function if
$$|F^\#(z)|<|F(z)|\qquad\text{ for all }z\in\C_+;$$
in particular, an HB function has no zeros in $\C_+$. For an HB function $F$, the de Branges space $B(F)$ is the Hilbert space of all entire functions $H$ such that both $H/F$ and $H^\#/F$ belong to $H^2(\C_+)$, with the norm $||H||_{B(F)}=||H/F||_{L^2(\R)}$. See \cite{dBr} for these notions. The functions $E(t,\cdot)$ and $\ti E(t,\cdot)$ introduced above are HB functions for every $t\in\R_+$.

Throughout the paper, $\mu$ denotes the spectral measure of the Dirac system \eqref{eqDS} corresponding to the Neumann initial condition at $0$, as defined in the introduction, and $\ti\mu$ denotes the spectral measure corresponding to the Dirichlet initial condition. For a Poisson-finite measure $\nu$ on $\R$ we write
$$||g||_\nu=\left(\int_\R |g|^2d\nu\right)^{1/2}\qquad\text{ and }\qquad <g,h>_\nu=\int_\R g\bar h\, d\nu$$
for the norm and the inner product in $L^2(\nu)$; in particular, $||\cdot||_\mu$ and $<\cdot,\cdot>_\mu$ stand for the norm and the inner product in $L^2(\mu)$.

Recall that the embedding $B(E(t,\cdot))\to L^2(\mu)$ is isometric; for our system, $B(E(t,\cdot))$ coincides with $\PW_t$ as a set (see \cite{Den}). We denote by $K(t,\l,z)$ the reproducing kernel of $B(E(t,\cdot))$ with respect to the inner product of $L^2(\mu)$: for every $F\in B(E(t,\cdot))$ and every $\l\in\C$,
$$F(\l)=<F,K(t,\l,\cdot)>_\mu.$$
In particular,
$$K(t,\l,\l)=||K(t,\l,\cdot)||^2_\mu=\max\{|F(\l)|^2\ |\ F\in\PW_t,\ ||F||_\mu\leq 1\}.$$
The kernels $\ti K(t,\l,z)$ of the Dirichlet chain $B(\ti E(t,\cdot))\to L^2(\ti\mu)$ are defined in the same way.

We denote by ${\mathcal D}(t,\l,z)$ and $\ti{\mathcal D}(t,\l,z)$ the numerators of the kernels:
\begin{equation}
	{\mathcal D}(t,\l,z)=\det\begin{pmatrix}A(t,z) & \bar A(t,\l) \\ C(t,z) & \bar C (t,\l)
	\end{pmatrix}=\frac 1{2i}\det\begin{pmatrix}E(t,z) & E(t,\bar \l) \\ E^\#(t,z) & E^\# (t,\bar \l)
	\end{pmatrix},\label{eq1004M}\end{equation}
and $\ti{\mathcal D}$ is defined by the same formulas with $B,D,\ti E$ in place of $A,C,E$. By the standard formula for the reproducing kernels of de Branges spaces (see \cite{dBr}),
\begin{equation}
K(t,\l,z)=\frac{{\mathcal D}(t,\l,z)}{\pi(\bar\l-z)},\qquad
\ti K(t,\l,z)=\frac{\ti{\mathcal D}(t,\l,z)}{\pi(\bar\l-z)} .
\label{eqKD}
\end{equation}

Let us denote by $\g(p)$ the function $$\g(p)=\sqrt{2}/\sqrt{\sinh [2p]}$$
defined for $p> 0$. For $s\in\R$ and $a>0$ we denote by $Q(s,a)$ the box
$$Q(s,a)=\{z\ |\ |\Im z|<a,\ |\Re z-s|<a\}$$
centered at $s$. If $\zeta=x-iy$, $y>0$, is a zero of $E(t,\cdot)$ (all zeros of $E(t,\cdot)$ lie in $\C_-$), we call $v=ty$ its {\it depth}; note that a zero lying in $Q(s,C/t)$ automatically has depth less than $C$.

\section{Maximal functions, universality conditions and regular sets}\label{secU}

The regular sets, on which the maximal estimates of Section \ref{secMax} will be proved, are defined in this section through the smallness of three classical maximal functions of the spectral data. The proofs themselves use the spectral universality conditions (U2)--(U3) stated below; the point of defining the regular sets through the maximal functions is that the universality conditions are hard to control directly, while maximal functions are classical objects amenable to direct estimates, and the former follow from the latter (Theorem \ref{thmAppMain}, proved in Appendix \ref{secApp}, and Proposition \ref{propU3} below).

{\bf Maximal functions.} We write
$$P_z(x)=\frac1\pi\,\frac{\Im z}{(x-\Re z)^2+(\Im z)^2},\qquad P\nu(z)=\int_\R P_z\,d\nu$$
for the Poisson kernel and the Poisson extension of a Poisson-finite measure $\nu$, and $\G_\s=\{z\in\C_+:\ |\Re z-\s|<\Im z\}$ for the cone over $\s\in\R$. For $h>0$ define the truncated maximal functions
$$M_h\nu(\s)=\sup\left\{P|\nu|(z):\ z\in\G_\s,\ \Im z\leq h\right\},$$
$$\MM_h g(\s)=M_h\!\left(|g-g(\s)|\,dm\right),\qquad
\MM^2_h G(\s)=M_h\!\left(|G-G(\s)|^2dm\right),$$
for a function $g$ with $|g-g(\s)|\in L^1(\Pi)$ and for $G\in L^2(\Pi)$. We denote by $G$ and $\ti G$ the outer functions in $\C_+$ with $|G|^2=w$, $|\ti G|^2=\ti w$ a.e.\ on $\R$; they exist by the Szeg\"o condition of Section 2, and belong to $L^2(\Pi)$ since $\mu$, $\ti\mu$ are Poisson-finite.

{\it Conditions (M).} Let $C\geq1$, $t\geq1$ and $\e_0\in(0,\tfrac14]$. We say that $(s,t)$ satisfies {\rm(M1)--(M3)} with constant $\e_0$ at scale $C$ if
$$\begin{gathered}
\text{(M1)}\qquad \MM_{50C/t}\!\log w\,(s)\ \leq\ \e_0,\\
\text{(M2)}\qquad \MM^2_{50C/t}G(s)\ \leq\ \e_0\,w(s),\\
\text{(M3)}\qquad M_{50C/t}\,\mu(s)\ \leq\ (1+\e_0)\,w(s),
\end{gathered}$$
and the same inequalities hold for the Dirichlet data $(\ti w,\ti G,\ti\mu)$ with $\ti w(s)$ in place of $w(s)$. We refer to the parameter $C$ as the {\it scale}: at time $t$ the natural resolution of the spectral data near $s$ is $1/t$, and all conditions of this section --- the truncations above and the boxes $Q(s,\cdot/t)$ below --- live at distances from $s$ that are bounded multiples of $C/t$.

{\bf Universality conditions.} The arguments of Sections \ref{secRep}--\ref{secMax} use the universality of the spectral data near almost every point of $\R$, in the form of the following two conditions on a pair $(s,t)$, $s\in\R$, $t>0$, depending on the parameters $C\geq 1$ and $\e\in(0,\tfrac14]$. Below $w=w(s)$, $\ti w=\ti w(s)$.

{\it (U2) (kernel approximation)} 
$$\begin{gathered}
\sup_{\l,z\in Q(s,16C/t)}\left|{\mathcal D}(t,\l,z)-\frac{\sin[t(\bar\l-z)]}{w}\right|\leq\e
\qquad\text{ and }\\
\sup_{\l,z\in Q(s,16C/t)}\left|\ti{\mathcal D}(t,\l,z)-\frac{\sin[t(\bar\l-z)]}{\ti w}\right|\leq\e .
\end{gathered}$$

{\it (U3) (exponential models at resonance-free times)} For each of the values $c=C$, $2C$, $4C$, $8C$ the following holds: if $E(t,\cdot)$ has no zeros in $Q(s,c/t)$, then there is a unimodular constant $\a$ such that
$$\left|E(t,x)-\frac{\a}{\sqrt w}\,e^{-itx}\right|\leq\e\qquad\text{ for all }x\in\R\cap Q(s,c/t),$$
and the same holds for $\ti E$ (with its own unimodular constant $\ti\a$) whenever $\ti E(t,\cdot)$ has no zeros in $Q(s,c/t)$.

{\bf From (M) to (U).} By Theorem \ref{thmAppMain}, proved in Appendix \ref{secApp}, conditions (M1)--(M3) with a small enough constant imply (U2) with any prescribed accuracy, with explicit constants; by Proposition \ref{propU3} below, condition (U3) then follows from (U2) taken on an enlarged box, with the accuracy limited only by the depth up to which the enlarged box is free of resonances. This makes it possible to define the regular sets through the maximal conditions alone. The $o(1)$-forms of both conditions are in \cite{Scatter}: (U2) is Lemma 5 and Corollary 1 there, and (U3) is Corollary 4, part 2). Earlier treatments also used the norm universality $wK(t,z,z)/\S(t,z,z)=1+o(1)$ on the full box (Lemma 2 of \cite{Scatter}), whose known proof at the points near the real axis is a compactness argument without an explicit modulus; the present paper avoids that condition altogether: the normalization of the Hermite--Biehler functions at the resonances is derived from (U2), in an additive form at all depths and in the multiplicative form above a thin shallow layer (Lemma \ref{lemBeta}), and the layer itself is excluded, outside an exceptional set whose measure is controlled by the tail of the potential, by the first-entry estimate of Lemma \ref{lemShallow}.

We also recall that $\sqrt{w\ti w}\leq1$ a.e.\ on $\R$: this follows from the Alexandrov--Clark representation of the pair $(w,\ti w)$ through a single contractive analytic function (see \cite{MIF1}),
$$w=\frac{1-|\phi|^2}{|1-\phi|^2},\qquad \ti w=\frac{1-|\phi|^2}{|1+\phi|^2},\qquad
\sqrt{w\ti w}=\frac{1-|\phi|^2}{|1-\phi^2|}\leq1 .$$
Consequently, on the set $\{\min(w,\ti w)\geq\eta_0\}$,
\begin{equation}
\eta_0\ \leq\ \min(w,\ti w)\ \leq\ \max(w,\ti w)\ \leq\ \frac1{\eta_0},
\qquad
\frac 1{\sqrt{w}}+\frac 1{\sqrt{\ti w}}\ \geq\ \frac 2{(w\ti w)^{1/4}}\ \geq\ 2 .
\label{eqxy2}
\end{equation}

{\bf Regular sets.} For $\eta_0\in(0,\tfrac12]$, $\d\in(0,\tfrac14]$, $C\geq1$ and $T\geq1$ put
\begin{equation}
C^\sharp=C^\sharp(C,\d):=C+K_9\left(1+\log(1/\d)\right),
\qquad
\e_0(\d,C,\eta_0)=\d^{K_{10}}\,\eta_0^{8}\,e^{-K_8C},
\label{eqEpsZero}
\end{equation}
with the absolute constant $K_8$ of Theorem \ref{thmAppMain} and absolute constants $K_9$, $K_{10}=8+K_8K_9$, and define the {\it regular set} $\RR(T,\eta_0,C,\d)$ as the set of all $s\in\R$ with $\min\left(w(s),\ti w(s)\right)\geq\eta_0$ such that

{\it for all $t\geq T/32$, conditions (M1)--(M3) hold at $(s,t)$ with the constant $\e_0(\d,C,\eta_0)$ at scale $C^\sharp$.}

(The factor $\tfrac1{32}$ allows the arguments of Section \ref{secMax} to look back several octaves from a time $t\geq T$; it plays no other role.)

The definition involves the maximal functions only. By Theorem \ref{thmAppMain}, applied at the scale $C^\sharp$ with $\e=\d$ (note that $\min(w,\ti w,1)\geq\eta_0$ by \eqref{eqxy2}, and that $\e_0\leq\d^8\eta_0^8e^{-K_8C^\sharp}$ by the choice of $K_{10}$), we obtain:
\begin{equation}
\begin{gathered}
\text{\it for every }s\in\RR(T,\eta_0,C,\d)\text{\it , condition (U2) holds at $(s,t)$, at the scale $C^\sharp$}\\
\text{\it (hence also at the scale $C$), with $\e=\d$, for all }t\geq T/32 ,
\end{gathered}
\label{eqRtoU}
\end{equation}
and condition (U3) at the scale $C$ is recovered from \eqref{eqRtoU} by the following proposition, proved at the end of the section. As in \cite{Scatter}, for $c>0$ we denote by $T_0(s,c)$ the set of all $t>0$ for which $E(t,\cdot)$ has zeros in the box $Q(s,c/t)$, and by $\ti T_0(s,c)$ the analogous set for $\ti E$; thus $t\notin T_0(s,c)$ means that the box $Q(s,c/t)$ contains no zeros of $E(t,\cdot)$.

\begin{proposition}\label{propU3}
There are absolute constants $K_9$, $K_{11}$ such that the following holds on $\RR(T,\eta_0,C,\d)$ for $t\geq T$ and $c\in\{C,2C,4C,8C\}$, provided that $C\geq4\pi$ and $4\d\leq\eta_0$. (Both conditions hold in all applications below: there $C\geq C_1(\eta_0)\geq4\pi$, with $C_1$ of Lemma \ref{lemPair}, and the thresholds imposed on $\d$ are always taken smaller than $\eta_0/4$.)

1) {\rm(lattice rigidity)} If $t\notin T_0(s,c)$, then every zero of $E(t,\cdot)$ in $Q(s,8C^\sharp/t)$ has depth at least $c-1$.

2) {\rm(resonance-free models)} If $t\notin T_0(s,c)$ and $v_{\min}\in[c-1,\infty]$ denotes the smallest depth of the zeros of $E(t,\cdot)$ in $Q(s,8C^\sharp/t)$, then there is a unimodular constant $\a$ such that
$$\left|E(t,x)-\frac{\a}{\sqrt w}\,e^{-itx}\right|\ \leq\ K_{11}\left(\d+e^{-2v_{\min}}\right)\eta_0^{-1}
\qquad\text{ for all }x\in\R\cap Q(s,c/t).$$
In particular, if no zeros of $E(t,\cdot)$ in $Q(s,8C^\sharp/t)$ have depth $\leq(C^\sharp-C)/2$, then (U3) holds at $(s,t)$ on the box $Q(s,c/t)$ with $\e=K_{11}\,\d\,\eta_0^{-1}$. The same statements hold for $\ti E$.
\end{proposition}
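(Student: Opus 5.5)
The only input will be (U2) at scale $C^\sharp$ with $\e=\d$, which holds for all $t\geq T/32$ by \eqref{eqRtoU}. The idea is to convert the kernel approximation into an approximation of $E(t,\cdot)$ by an exponential polynomial on the big box $Q(s,16C^\sharp/t)$, and then let the zero-free small box fix the model.

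\textbf{Step 1: a two-term model for $E$.} By \eqref{eq1004M}, for $\l,z$ in the box,
$$2i\,{\mathcal D}(t,\l,z)=E(t,z)E^\#(t,\bar\l)-E^\#(t,z)E(t,\bar\l),$$
and by (U2) this is $\d$-close to $2i\sin[t(\bar\l-z)]/w$. I will fix a real reference point $x_0=s$ and write the sine as a sum of exponentials:
$$2i\sin[t(\bar\l-z)]=e^{it(\bar\l-z)}-e^{-it(\bar\l-z)}.$$
Then I will solve for $E(t,z)$ using two values of $\l$. The determinant relation $\Im(E\bar{\ti E})=1$ together with $|E(t,x_0)|\lesssim\eta_0^{-1/2}$, which follows from (U2) on the diagonal, controls the denominators. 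The outcome should be that, uniformly on $Q(s,16C^\sharp/t)$,
$$E(t,z)=\frac{1}{\sqrt w}\left(pe^{-itz}+qe^{itz}\right)+O(\d\eta_0^{-1}),$$
where $p,q$ are complex constants with $|p|^2-|q|^2=1$. This normalization comes from comparing the coefficients of $e^{\pm it\bar\l}$. Since $E$ is Hermite--Biehler, we also have $|q|\leq|p|$.

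\textbf{Step 2: locating the zeros.} The zeros of the model $pe^{-itz}+qe^{itz}$ satisfy $e^{2itz}=-p/q$. They therefore form a lattice with spacing $\pi/t$ on the horizontal line $\Im z=-v/t$, where $v=\frac12\log(|p|/|q|)$. Rouché's theorem on small discs should then show that, once $e^{-2v}\gg\d\eta_0^{-1}$, every zero of $E(t,\cdot)$ in $Q(s,8C^\sharp/t)$ lies within $O(\d e^{2v}\eta_0^{-1})/t$ of a lattice point, and each lattice point carries exactly one zero. When $e^{-2v}\lesssim\d\eta_0^{-1}$, the depth is of order $\log(1/\d)$ or more, and $C^\sharp-C=K_9(1+\log 1/\d)$ is designed to accommodate this case.

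\textbf{Step 3: lattice rigidity (part 1).} Suppose $t\notin T_0(s,c)$. Since $C\geq4\pi$, the box $Q(s,c/t)$ has width $2c/t>\pi/t$, so it contains the real parts of lattice points. If the lattice depth $v$ were less than $c-1$, then, using $4\d\leq\eta_0$ to keep the perturbation below distance $1/t$, some zero of $E(t,\cdot)$ would fall inside $Q(s,c/t)$, which is a contradiction. Hence $v\geq c-1$ up to an error absorbed in the ``$-1$''. Because all the zeros in $Q(s,8C^\sharp/t)$ lie near this one lattice, they all have depth at least $c-1$.

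\textbf{Step 4: resonance-free models (part 2).} With $v_{\min}\approx v$, Step 1 gives $|q/p|=e^{-2v}\leq e^{-2v_{\min}+O(1)}$. On real $x\in Q(s,c/t)$ this yields
$$E(t,x)=\frac{p}{\sqrt w}e^{-itx}+O\left(\left(e^{-2v_{\min}}+\d\right)\eta_0^{-1}\right),$$
and $|p|=1+O(e^{-2v_{\min}})$. Taking $\a=p/|p|$ gives the claimed bound. If no zero has depth $\leq(C^\sharp-C)/2$, then $e^{-2v_{\min}}\leq e^{-K_9(1+\log1/\d)}\leq\d$ once $K_9\geq1$, so (U3) holds with $\e=K_{11}\d\eta_0^{-1}$. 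The case of $\ti E$ is identical, using $\ti{\mathcal D}$.

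\textbf{Main obstacle.} I expect the hardest part to be Step 1: extracting a clean two-exponential model for $E$ from the bilinear determinant approximation with constants that are only polynomial in $\eta_0^{-1}$. The difficulty is that ${\mathcal D}$ determines $E$ only up to a real-linear ($SL_2(\R)$-type) change of the pair $(A,C)$. I would first try to handle this by writing $(A,C)$ at $z$ as a combination of $(\cos tz,\sin tz)$: the Wronskian-type identity forces the coefficient matrix to have determinant $1/w$, up to $O(\d)$, and the Hermite--Biehler property fixes its orientation.
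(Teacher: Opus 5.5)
There is a genuine gap in part 1 (lattice rigidity), and it comes from where your model is normalized.

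\textbf{The depth problem.} Your Step 1 model is normalized on the real axis: $|p|\approx1$, with additive error $O(\delta\eta_0^{-1})$, which is the level that real slots give. Near its line of zeros at depth $v$, the function $\frac1{\sqrt w}(pe^{-itz}+qe^{itz})$ has modulus only of order $e^{-v}w^{-1/2}\,t|z-z_k|$ near a lattice point $z_k$.

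\begin{itemize}
\item Consequently, Rouch\'e localizes the zeros of $E(t,\cdot)$ only with a displacement that grows exponentially in $v$. It fails once your own condition $e^{-2v}\gg\delta\eta_0^{-1}$ is violated.
\item In Step 3 you invoke $4\delta\le\eta_0$ to keep the displacement below $1/t$. That inequality only controls a depth-independent displacement of size $O(\delta w)$.
\item Part 1 must exclude every zero of depth $<c-1$ in $Q(s,8C^\sharp/t)$, with $c$ as large as $8C$. The hypotheses $C\ge4\pi$, $4\delta\le\eta_0$ do not make $\delta$ small compared with $e^{-2c}$.
\item So a zero of depth between roughly $\frac12\log(\eta_0/\delta)$ and $c-1$, lying horizontally outside $Q(s,c/t)$, sits below the error level of your model and is not excluded.
\item The margin $C^\sharp-C\sim\log(1/\delta)$ does not help. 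It enlarges the box, not the depth threshold $c-1$.
\end{itemize}

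Your argument therefore proves part 1 only under an extra assumption like $\delta\ll\eta_0e^{-2c}$. Such smallness is available in the later lemmas, where $\delta\le\delta_0(C,\eta_0)$, but it is not a hypothesis of the proposition.

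\textbf{The a priori bound in Step 1.} The bound $|E(t,x_0)|\lesssim\eta_0^{-1/2}$ does not follow from (U2), for three reasons:
\begin{itemize}
\item on the real diagonal $\mathcal{D}(t,x,x)=0$;
\item off the real axis the diagonal only controls $|E|^2-|E^\#|^2$;
\item (U2) is invariant under the $SL_2(\mathbb{R})$ action you mention, and that orbit contains shallow-lattice models with $|E|$ arbitrarily large on $\mathbb{R}$.
\end{itemize}
Solving from two real slots (say $p_2-p_1=\pi/(2t)$) gives only an error $O(\delta w)\,(|E(p_1)|+|E(p_2)|)$, i.e.\ an error relative to the size of the model. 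That suffices for part 2, but it does not cure the depth problem.

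\textbf{The missing idea: use the zero itself as the slot.} Let $\zeta\in Q(s,8C^\sharp/t)$ be a zero of depth $v\le c-1$ and put $\beta=A(t,\zeta)$.
\begin{itemize}
\item By \eqref{eqDslot}, $\mathcal{D}(t,\zeta,\cdot)=i\bar\beta E^\#(t,\cdot)$.
\item Applying (U2) at $\lambda=\zeta$ and conjugating gives $|i\beta E(t,z)-\sin[t(z-\zeta)]/w|\le\delta$ on $Q(s,16C^\sharp/t)$.
\item This model is normalized at the zero. On the circle of radius $r/t$ about each lattice point $\zeta+k\pi/t$ it has modulus at least about $r/w$, at every depth.
\item Rouch\'e on circles of radius comparable to $\delta w/t$ then applies; here $4\delta\le\eta_0$ ensures $\delta w\le\tfrac14$.
\item It produces a zero of $E(t,\cdot)$ next to the lattice point whose real part is nearest to $s$. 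That zero lies in $Q(s,c/t)$ because $c\ge4\pi$, which is a contradiction.
\end{itemize}

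In your linear-solve language, you can get the same effect by taking the two slots at $\mu$ and $\bar\mu$ with $t|\Im\mu|=Y$ close to $16C^\sharp$. The determinant of the system then has modulus about $\sinh(2Y)/w$, and the relative error drops to about $\delta w e^{-Y}$.

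\textbf{Part 2.} With part 1 in hand, your Steps 2 and 4, run with the relative error, do yield part 2. Here you are close to the paper, which also argues by a two-exponential model and Rouch\'e against $v_{\min}$. The paper, however, obtains its model from real slots where $C(t,\cdot)=0$ and $A(t,\cdot)=0$, located by a phase-winding argument.
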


The maximal functions are Borel functions of $s$; hence the sets $\RR$ are measurable. They increase in $T$, and, by the almost-everywhere-eventually property (Proposition \ref{propAae}, proved in Appendix \ref{secApp}), for every fixed $\eta_0,C,\d$ almost every point of $\{\min(w,\ti w)\geq\eta_0\}$ belongs to $\RR(T,\eta_0,C,\d)$ for some $T$. The union of the sets $\{\min(w,\ti w)\geq\eta_0\}$ over $\eta_0>0$ has full measure.

\begin{proof}[Proof of Proposition \ref{propU3}]
Throughout, (U2) is used at the scale $C^\sharp$, i.e.\ on the box $Q(s,16C^\sharp/t)$, with $\e=\d$, as provided by \eqref{eqRtoU}.

1) Suppose $\zeta$ is a zero of $E(t,\cdot)$ in $Q(s,8C^\sharp/t)$ of depth $v\leq c-1$, and put $\b=A(t,\zeta)$ as in \eqref{eqDslot}. Then $\b\neq0$: otherwise ${\mathcal D}(t,\zeta,\cdot)\equiv0$ by \eqref{eqDslot}, while (U2) forces $|{\mathcal D}(t,\zeta,z)|\geq|\sin[t(\bar\zeta-z)]|/w-\d\geq1/(2w)$ at suitable $z$. By the conjugated slot identity (the display \eqref{eqMappE} below, whose derivation uses only (U2) and \eqref{eqDslot} and is valid on the present box), $|i\b E(t,z)-\sin[t(z-\zeta)]/w|\leq\d$ there. The function $\sin[t(z-\zeta)]$ vanishes exactly at the lattice points $z_k=\zeta+k\pi/t$, $k\in\Z$, all of depth $v$, and on the circle $|z-z_k|=r/t$, $r\leq1$, its modulus is at least $r/2$. Taking $r=4\d w$ (so that $r\leq4\d\eta_0^{-1}\leq1$) and applying Rouch\'e's theorem to $i\b E$ against $\sin[\cdot]/w$ on that circle, $E(t,\cdot)$ has a zero in each disc $B(z_k,4\d w/t)$ with $z_k\in Q(s,15C^\sharp/t)$ (so that the discs stay inside the box of (U2)). Since the lattice has step $\pi/t$ and $c\geq4\pi$, some $z_k$ lies in $Q(s,c/(2t))$, producing a zero of $E(t,\cdot)$ in $Q(s,c/t)$ of depth at most $v+4\d w\leq c$ --- contradicting $t\notin T_0(s,c)$. 

2) Let $I=\R\cap Q(s,c/t)$. For real $\l=p$, $z=x$, \eqref{eq1004M} gives ${\mathcal D}(t,p,x)=\Im\left[E(t,x)\overline{E(t,p)}\right]$, so (U2) reads
\begin{equation}
\left|\,\Im\left[E(t,x)\overline{E(t,p)}\right]-\frac{\sin[t(p-x)]}{w}\,\right|\ \leq\ \d,\qquad x,p\in I .
\label{eqRealPair}
\end{equation}
{\it Step 1: the winding of the phase.} We claim that there exist $x_0,x_1\in I$ with $E(t,x_0)\in(0,\infty)$ and $E(t,x_1)\in-i(0,\infty)$.

First, $E(t,\cdot)$ has no zeros on $\R\cap Q(s,8C^\sharp/t)$: at a real zero $x_*$ the real entire functions $A(t,\cdot)$ and $C(t,\cdot)$ would vanish simultaneously, making ${\mathcal D}(t,x_*,\cdot)\equiv0$ by \eqref{eq1004M}, while (U2) gives $|{\mathcal D}(t,x_*,z)|\geq\sinh(1)/w-\d>0$ at $z=x_*-i/t$. We may therefore write $E(t,x)=|E(t,x)|\,e^{-i\varphi(x)}$ on $\R\cap Q(s,8C^\sharp/t)$ with a real-analytic phase $\varphi$. The phase is strictly increasing: for real $x$ in that interval, the function $g(z)={\mathcal D}(t,x,z)-\sin[t(x-z)]/w$ is analytic and bounded by $\d$ on the disc $B(x,8C^\sharp/t)\subset Q(s,16C^\sharp/t)$ and vanishes at its center, so $|g'(x)|\leq\d t/(8C^\sharp)$ by the Schwarz lemma; since ${\mathcal D}(t,x,x)=0$, \eqref{eqKD} gives $K(t,x,x)=-\pa_z{\mathcal D}(t,x,z)|_{z=x}/\pi$, and the standard identity $\pi K(t,x,x)=\varphi'(x)|E(t,x)|^2$ for the diagonal of the reproducing kernel (see \cite{dBr}) yields
\begin{equation}
\varphi'(x)\,|E(t,x)|^{2}\ =\ \frac tw\left(1\pm\frac{\d w}{8C^\sharp}\right)\ >\ 0
\qquad\text{ on }\R\cap Q(s,8C^\sharp/t):
\label{eqPhaseVel}
\end{equation}
the phase velocity, weighted by $|E|^2$, is that of the free exponential.

Next we show that $\varphi$ advances by exactly $\pi$ on each period. Fix $p\in I$ and consider the entire function
$$z\ \mapsto\ {\mathcal D}(t,p,z)\ =\ \frac1{2i}\left[\,\overline{E(t,p)}\,E(t,z)-E(t,p)\,E^\#(t,z)\,\right],$$
which is \eqref{eq1004M} at real $\l=p$. Since $\overline{E(t,z)}=E^\#(t,\bar z)$, we have $\overline{{\mathcal D}(t,p,z)}={\mathcal D}(t,p,\bar z)$: the zero set of ${\mathcal D}(t,p,\cdot)$ is symmetric with respect to $\R$. On the real axis ${\mathcal D}(t,p,x)=|E(t,x)||E(t,p)|\sin[\varphi(p)-\varphi(x)]$, so the real zeros of ${\mathcal D}(t,p,\cdot)$ are exactly the points where $\varphi\equiv\varphi(p)$ modulo $\pi$. On the other hand, (U2) and Rouch\'e's theorem, applied as in part 1) to ${\mathcal D}(t,p,\cdot)$ against $\sin[t(p-z)]/w$ with $r=4\d w\leq1$, show that ${\mathcal D}(t,p,\cdot)$ has exactly one zero, counted with multiplicity, in each disc $B(p+k\pi/t,\,4\d w/t)$ whose center lies in $Q(s,15C^\sharp/t)$, and no real zeros in $Q(s,15C^\sharp/t)$ outside these discs: at a real point at distance at least $4\d w/t$ from the lattice $p+\tfrac\pi t\Z$,
$$\left|{\mathcal D}(t,p,x)\right|\ \geq\ \frac{\left|\sin[t(p-x)]\right|}{w}-\d\ \geq\ \frac{4\d w}{2w}-\d\ =\ \d\ >\ 0 .$$
Each disc is symmetric with respect to $\R$ and the zero set of ${\mathcal D}(t,p,\cdot)$ is conjugation-invariant, so if the unique zero in a disc were not real it would come with its conjugate, giving two zeros in the disc; hence that zero is real and simple. Denote it by $\tau_k$:
$$\tau_k=p+\frac{k\pi}t\pm\frac{4\d w}t,\qquad \tau_0=p,\qquad \varphi(\tau_k)\equiv\varphi(p)\ \ (\mathrm{mod}\ \pi).$$
The increments $\varphi(\tau_{k+1})-\varphi(\tau_k)$ are positive by \eqref{eqPhaseVel} and are integer multiples of $\pi$. If some increment were $m\pi$ with $m\geq2$, the intermediate values $\varphi(\tau_k)+\pi,\dots,\varphi(\tau_k)+(m-1)\pi$ would be attained, by \eqref{eqPhaseVel} and the intermediate value theorem, at $m-1$ points of the open interval $(\tau_k,\tau_{k+1})$ --- real zeros of ${\mathcal D}(t,p,\cdot)$ distinct from all the $\tau_j$, while the $\tau_j$ are the only zeros of ${\mathcal D}(t,p,\cdot)$ in the discs and there are no real zeros outside them. Hence
\begin{equation}
\varphi(\tau_{k+1})-\varphi(\tau_k)\ =\ \pi\qquad\text{ for all admissible }k:
\label{eqQuant}
\end{equation}
the phase advances by exactly $\pi$ on each step of the perturbed lattice $\{\tau_k\}$ --- $\arg E(t,\cdot)$ can neither lag behind the free exponential $e^{-itx}$ nor outrun it. Since $c\geq4\pi$, the interval $I$, of length $2c/t\geq8\pi/t$, contains at least five of the points $\tau_k$, so $\varphi$ increases by at least $4\pi>2\pi$ along $I$ by \eqref{eqQuant}, and $\arg E(t,\cdot)=-\varphi$ attains on $I$ every value modulo $2\pi$. Choose $x_0,x_1\in I$ with $E(t,x_0)\in(0,\infty)$ and $E(t,x_1)\in-i(0,\infty)$; then $C(t,x_0)=0$ and $A(t,x_1)=0$, proving the claim.

{\it Step 2: slot evaluation.} Evaluating (U2) at the real slots $\l=x_0$ and $\l=x_1$ (where ${\mathcal D}(t,x_0,z)=-A(t,x_0)C(t,z)$ and ${\mathcal D}(t,x_1,z)=C(t,x_1)A(t,z)$) gives, with $r_0=A(t,x_0)$, $r_1=C(t,x_1)$ and $x_2:=x_1-\pi/(2t)$,
$$C(t,z)=-\frac{\sin[t(x_0-z)]}{r_0\,w}\pm\frac{\d}{r_0},\qquad
A(t,z)=\frac{\cos[t(z-x_2)]}{r_1\,w}\pm\frac{\d}{r_1}$$
on $Q(s,16C^\sharp/t)$, while \eqref{eqRealPair} at $(x,p)=(x_0,x_1)$ gives $r_0r_1w=|\cos[t(x_0-x_2)]|\pm\d w$. Hence
$$\begin{gathered}E(t,z)=A-iC=\frac{1}{r_1w}\left[\,\rho\cos[t(z-x_2)]\,+\,i\rho\,\frac{r_1}{r_0}\sin[t(x_0-z)]\,\right]_{\rho=1}\pm\\ \pm\ K\d\,\frac{1+\rho}{r_0w},\end{gathered}$$
i.e., after normalization, $r_0wE(t,z)=\rho\cos(\theta-\a_1)-i\sin\theta\pm K\d w(1+\rho)$ with $\theta=t(z-x_0)+\pi/2$-type real coordinates, $\rho=r_0/r_1$ and $\a_1=t(x_0-x_2)$ modulo $2\pi$. The bracket equals $\tfrac12e^{i\theta}\left(\rho e^{-i\a_1}-1\right)+\tfrac12e^{-i\theta}\left(\rho e^{i\a_1}+1\right)$, so with the mismatch $\e_m:=|\rho e^{-i\a_1}-1|$ it has zeros precisely on the lattice $\left\{\theta:\ e^{2i\theta}=-\frac{\rho e^{i\a_1}+1}{\rho e^{-i\a_1}-1}\right\}$, whose depth is $\frac12\log\frac{|\rho e^{i\a_1}+1|}{\e_m}=\frac12\log\frac{2\pm\e_m}{\e_m}$. If $\e_m\geq K(\d w+e^{-2v_{\min}})$ with $K$ large, these zeros lie at depth smaller than $v_{\min}-1$. If that depth does not exceed $8C^\sharp-1$, the Rouch\'e transfer of part 1) (run against the two-exponential model, whose modulus on circles of radius $\e_m/4$ around its zeros exceeds $c\,\e_m$, above the error level $K\d w$) forces zeros of $E(t,\cdot)$ of depth smaller than $v_{\min}$ in $Q(s,8C^\sharp/t)$ --- a contradiction with the definition of $v_{\min}$. If instead the depth $\tfrac12\log\frac{2+\e_m}{\e_m}$ exceeds $8C^\sharp-1$, then $\e_m\leq2e^2e^{-16C^\sharp}\leq K\d^{16}$ by the choice of $K_9$, and the conclusion below holds directly. Hence $\e_m\leq K(\d w+e^{-2v_{\min}})$, and then $\rho=1\pm K\e_m$, $r_0r_1w=1\pm K(\d w+\e_m)$, $r_0=(1\pm K\e_m)/\sqrt w$, and the bracket is $e^{-i\theta}\pm K\e_m$: unwinding the normalizations, $E(t,x)=\a e^{-itx}/\sqrt w\pm K(\d+e^{-2v_{\min}})\eta_0^{-1}$ on $I$ for a unimodular $\a$, which is 2). For the final claim, if no zeros of depth $\leq(C^\sharp-C)/2$ are present, then $e^{-2v_{\min}}\leq e^{-(C^\sharp-C)}\leq\d$ by the choice of $K_9$. The Dirichlet family is treated identically.
\end{proof}

In what follows, for $c\in\{C,2C,4C,8C\}$ we write
\begin{equation}
\epsilon_c:=K_{11}\left(\d+e^{-2(c-1)}\right)\eta_0^{-1}
\label{eqEpsU3}
\end{equation}
for the accuracy of the resonance-free exponential models provided by Proposition \ref{propU3}, 2) together with 1) (which guarantees $v_{\min}\geq c-1$). Note that $\epsilon_c\to0$ only when both $\d\to0$ and $C\to\infty$: this is intrinsic, since a resonance of depth just above $c$ produces a genuine deviation of order $e^{-2c}$ from the exponential model, invisible to any assumption of absence of zeros from the box $Q(s,c/t)$.

\section{Representations above the shallow layer}\label{secRep}

Throughout this section $s\in\RR(T,\eta_0,C,\d)$ and $t\geq T$; all constants denoted by $K$ are absolute and may change from line to line. Recall from Section 2 that the depth of a zero $\zeta=x-iy$, $y>0$, of $E(t,\cdot)$ is the quantity $v=ty$. We define $\uv=\uv(\d)$ by
\begin{equation}
\sinh[2\uv]=\d^{1/2},
\label{eqLayer}
\end{equation}
so that $\uv\approx\tfrac12\d^{1/2}$, and call the range of depths below $\uv$ the {\it shallow layer}: a zero of depth smaller than $\uv$ lies within $\uv/t$ of the real axis, where the additive error $\d$ of (U2) becomes comparable to $\sinh[2v]$ --- the quantity that normalizes the Hermite--Biehler functions at the zero (Lemma \ref{lemBeta} below) --- so that the multiplicative form of that normalization is lost. We say that the pair $(s,t)$ is {\it $\d$-shallow} if some zero of $E(t,\cdot)$ or of $\ti E(t,\cdot)$ in $Q(s,16C/t)$ has depth smaller than $\uv$. The representations of this section are multiplicative above the layer; the shallow pairs are excluded, outside an exceptional set whose measure is controlled by the tail of the potential, in Lemma \ref{lemShallow} below.

The first lemma is the normalization of the Hermite-Biehler functions at a resonance. Its additive form requires nothing but (U2) and holds at all depths; the multiplicative form holds above the layer.

\begin{lemma}\label{lemBeta}
Let $\zeta=x_0-iy_0$ be a zero of $E(t,\cdot)$ in $Q(s,16C/t)$, $v=ty_0$ its depth, and put $\b=A(t,\zeta)$. Then
$$A(t,\zeta)=iC(t,\zeta)=\b,\qquad E^\#(t,\zeta)=2\b,$$
and
\begin{equation}
\left|\,2|\b|^2 w(s)-\sinh[2v]\,\right|\ \leq\ \d\,w(s)\ \leq\ \d\,\eta_0^{-1}.
\label{eqBetaAdd}
\end{equation}
In particular, if $v\geq\uv$, then
\begin{equation}
2|\b|^2 w(s)=\theta\,\sinh[2v]\qquad\text{ for some }\theta\in\left[1-\d^{1/2}\eta_0^{-1},\,1+\d^{1/2}\eta_0^{-1}\right].
\label{eqBetaNorm}
\end{equation}
The same holds for the Dirichlet family: if $\ti\zeta_0$ is a zero of $\ti E(t,\cdot)$ in $Q(s,16C/t)$ of depth $\ti v$ and $\ti\b=B(t,\ti\zeta_0)$, then $|2|\ti\b|^2\ti w(s)-\sinh[2\ti v]|\leq\d\eta_0^{-1}$, and $2|\ti\b|^2\ti w(s)=\ti\theta\sinh[2\ti v]$, $\ti\theta\in[1\pm\d^{1/2}\eta_0^{-1}]$, whenever $\ti v\geq\uv$.
\end{lemma}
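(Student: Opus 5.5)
The plan is to evaluate the kernel numerator ${\mathcal D}$ on the diagonal at the reflected point $\bar\zeta\in\C_+$ and compare it with the model supplied by (U2); the lemma then follows by a few lines of bookkeeping.

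\emph{Algebraic identities.} Since $E=A-iC$ with $A,C$ real entire, $E(t,\zeta)=0$ gives $A(t,\zeta)=iC(t,\zeta)=\b$. Then $E^\#(t,\zeta)=A(t,\zeta)+iC(t,\zeta)=2\b$. We also record that $E(t,\bar\zeta)=\overline{E^\#(t,\zeta)}=2\bar\b$.

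\emph{The kernel at $\l=\bar\zeta$.} Put $\l=\bar\zeta$ in \eqref{eq1004M}, so that $\bar\l=\zeta$. Using $E(t,\zeta)=0$,
$${\mathcal D}(t,\bar\zeta,z)=\frac1{2i}\left[E(t,z)E^\#(t,\zeta)-E^\#(t,z)E(t,\zeta)\right]=-i\b\,E(t,z).$$
Setting $z=\bar\zeta$ gives ${\mathcal D}(t,\bar\zeta,\bar\zeta)=-i\b\cdot2\bar\b=-2i|\b|^2$.

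On the model side, $\bar\l-z=\zeta-\bar\zeta=-2iy_0$. Hence $\sin[t(\bar\l-z)]=\sin(-2iv)=-i\sinh[2v]$.

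The point $\bar\zeta$ lies in $Q(s,16C/t)$, since the box is symmetric about $\R$. That box is contained in $Q(s,16C^\sharp/t)$, so (U2) with $\e=\d$ applies there by \eqref{eqRtoU}, because $t\geq T$. It yields
$$\left|-2i|\b|^2+i\,\frac{\sinh[2v]}{w}\right|\leq\d .$$
Multiplying by $w=w(s)$ gives the first inequality in \eqref{eqBetaAdd}. The bound $\d w\leq\d\eta_0^{-1}$ follows from $w\leq1/\eta_0$ in \eqref{eqxy2}.

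\emph{Multiplicative form.} If $v\geq\uv$, then $\sinh[2v]\geq\sinh[2\uv]=\d^{1/2}$ by \eqref{eqLayer}. Define $\theta:=2|\b|^2w/\sinh[2v]$. Dividing \eqref{eqBetaAdd} by $\sinh[2v]$ gives
$$|\theta-1|\leq\frac{\d\eta_0^{-1}}{\sinh[2v]}\leq\d^{1/2}\eta_0^{-1},$$
which is \eqref{eqBetaNorm}.

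\emph{Dirichlet family.} The same computation applies with $B,D,\ti E,\ti{\mathcal D},\ti w$ in place of $A,C,E,{\mathcal D},w$. It uses the second line of (U2) and $\ti w\leq1/\eta_0$.

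There is no real obstacle here. The only care needed is the conjugation bookkeeping: choosing the slot $\l=\bar\zeta$ rather than $\zeta$, so that the term containing $E(t,\zeta)$ drops out and the diagonal value equals $-2i|\b|^2$.
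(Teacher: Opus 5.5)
Your proof is correct and is the paper's argument reflected through the real axis. The paper evaluates (U2) at $\l=z=\zeta$ via the slot identity ${\mathcal D}(t,\zeta,z)=i\bar\b\,E^\#(t,z)$, getting $2i|\b|^2$ against $i\sinh[2v]/w$. You use $\l=z=\bar\zeta$, which is the complex conjugate of the same computation since ${\mathcal D}(t,\bar\l,\bar z)=\overline{{\mathcal D}(t,\l,z)}$.

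Your closing remark overstates the care needed. With $\l=\zeta$ the term $E^\#(t,\bar\zeta)=\overline{E(t,\zeta)}$ vanishes just as well, so either slot works, and the paper's choice does not even need the conjugation symmetry of the box.
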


\begin{proof}
Since $E=A-iC$ vanishes at $\zeta$, $A(\zeta)=iC(\zeta)=\b$ and $E^\#(\zeta)=A(\zeta)+iC(\zeta)=\b+\b=2\b$. Since $A,C$ are real entire, $\bar A(t,\zeta)=\overline{A(t,\zeta)}=\bar\b$ and $\bar C(t,\zeta)=\overline{C(t,\zeta)}=i\bar\b$, so by \eqref{eq1004M},
\begin{equation}
{\mathcal D}(t,\zeta,z)=A(t,z)\,\overline{C(t,\zeta)}-C(t,z)\,\overline{A(t,\zeta)}
=i\bar\b\,E^\#(t,z).
\label{eqDslot}
\end{equation}
Evaluating \eqref{eqDslot} at $z=\zeta$ gives ${\mathcal D}(t,\zeta,\zeta)=i\bar\b\cdot2\b=2i|\b|^2$, while $\sin[t(\bar\zeta-\zeta)]=\sin[2iv]=i\sinh[2v]$. Condition (U2) at $\l=z=\zeta\in Q(s,16C/t)$ therefore gives
$$\left|2i|\b|^2-\frac{i\sinh[2v]}{w(s)}\right|\leq\d,$$
which is \eqref{eqBetaAdd} after multiplying by $w(s)\leq\eta_0^{-1}$ (see \eqref{eqxy2}). If $v\geq\uv$, then $\sinh[2v]\geq\d^{1/2}$ by \eqref{eqLayer}, and the additive error $\d w(s)$ is at most $\d^{1/2}\eta_0^{-1}\sinh[2v]$, which gives \eqref{eqBetaNorm}. The Dirichlet case is identical, with $\ti E=B-iD$ and the second half of (U2).
\end{proof}

\begin{lemma}\label{lemRep}
Let $\zeta$ be a zero of $E(t,\cdot)$ in $Q(s,8C/t)$, let $v$ be its depth, and suppose $v\geq\uv$. Then there is a unimodular constant $\a=\a(s,t)$ such that
\begin{equation}
\sup_{z\in Q(s,8C/t)}\left|E(t,z)-\a\,\frac{\g(v)}{\sqrt{w}}\,\sin[t(z-\zeta)]\right|\ \leq\ \vartheta\,\g(v),
\quad
\vartheta:=K_5\,\d^{1/2}e^{16C}\eta_0^{-3/2},
\label{eqRepE}
\end{equation}
with an absolute constant $K_5$. The same holds for the Dirichlet family: if $\ti\zeta_0$ is a zero of $\ti E(t,\cdot)$ in $Q(s,8C/t)$ of depth $\ti v\geq\uv$, then, with $\ti\zeta:=\ti\zeta_0-\pi/(2t)$ and a unimodular $\ti\a$,
\begin{equation}
\sup_{z\in Q(s,8C/t)}\left|\ti E(t,z)-\ti\a\,\frac{\g(\ti v)}{\sqrt{\ti w}}\,\cos[t(z-\ti\zeta)]\right|\ \leq\ \vartheta\,\g(\ti v).
\label{eqRepEt}
\end{equation}
\end{lemma}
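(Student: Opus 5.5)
The plan is to read the representation directly off the kernel approximation (U2), evaluated in the slot $\l=\zeta$ with the help of the slot identity \eqref{eqDslot}, and then use Lemma \ref{lemBeta} to replace the normalizing constant $1/\b$ by $\g(v)/\sqrt w$. No Rouch\'e-type argument should be needed: the zero $\zeta$ itself pins the lattice of the sine.

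\emph{Step 1 (slot evaluation).} Since $\zeta\in Q(s,8C/t)\subset Q(s,16C/t)$, (U2) may be applied with $\l=\zeta$. By \eqref{eqDslot}, ${\mathcal D}(t,\zeta,z)=i\bar\b E^\#(t,z)$, so
$$\left|i\bar\b\,E^\#(t,z)-\frac{\sin[t(\bar\zeta-z)]}{w}\right|\leq\d\qquad\text{ for } z\in Q(s,16C/t).$$
The box is symmetric with respect to $\R$, so we may replace $z$ by $\bar z$ and take complex conjugates. Using $\overline{E^\#(t,\bar z)}=E(t,z)$ and $\overline{\sin[t(\bar\zeta-\bar z)]}=\sin[t(\zeta-z)]$, this becomes
$$\left|i\b\,E(t,z)-\frac{\sin[t(z-\zeta)]}{w}\right|\leq\d .$$
By Lemma \ref{lemBeta}, $\b\neq0$ when $v\geq\uv$. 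Dividing by $i\b$ and writing $\b=|\b|e^{i\psi}$, we set $\a=-ie^{-i\psi}$ and obtain
$$E(t,z)=\frac{\a}{|\b|\,w}\,\sin[t(z-\zeta)]\pm\frac{\d}{|\b|}\qquad\text{ on } Q(s,16C/t).$$

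\emph{Step 2 (normalization).} By \eqref{eqBetaNorm}, $|\b|=\sqrt{\theta\sinh[2v]/(2w)}$, which gives
$$\frac1{|\b|w}=\frac{\g(v)}{\sqrt w}\,\theta^{-1/2},\qquad \frac{\d}{|\b|}=\frac{\d\sqrt w\,\g(v)}{\theta^{1/2}}.$$
Consequently the error in \eqref{eqRepE} splits into two terms.

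The first term comes from the sine coefficient. It is bounded by $|\theta^{-1/2}-1|\,\g(v)w^{-1/2}|\sin[t(z-\zeta)]|$. We use $|\theta^{-1/2}-1|\leq K\d^{1/2}\eta_0^{-1}$ and $w^{-1/2}\leq\eta_0^{-1/2}$ from \eqref{eqxy2}. For $z\in Q(s,8C/t)$ we have $|\Im t(z-\zeta)|\leq 8C+v\leq16C$, so $|\sin|\leq e^{16C}$. Together these give $K\d^{1/2}e^{16C}\eta_0^{-3/2}\g(v)$.

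The second term is $\d\sqrt w\,\theta^{-1/2}\g(v)\leq K\d\eta_0^{-1/2}\g(v)$, which is dominated by the first.

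The only point needing care is keeping $\theta$ bounded away from $0$, i.e.\ having $\d^{1/2}\eta_0^{-1}\leq\tfrac12$. In the complementary regime we would argue as follows. There $\vartheta\geq\tfrac12K_5e^{16C}\eta_0^{-1/2}$ is large. The additive form \eqref{eqBetaAdd} still yields $|\b|^2\geq c\sinh[2v]/w$ unless $\sinh[2v]\lesssim\d w$. If that lower bound holds, the estimate follows crudely from Step 1. If instead $\sinh[2v]\lesssim\d w$, we are in a degenerate case that must be excluded by the hypothesis $v\geq\uv$. I expect this bookkeeping of $\theta$ to be the only real obstacle. It is simplest to impose $\d\leq\eta_0^2/4$, consistent with the standing smallness of $\d$, and treat the rest as trivial.

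\emph{Dirichlet family.} The same argument applies with $\ti\b=B(t,\ti\zeta_0)=iD(t,\ti\zeta_0)$ and $\ti{\mathcal D}(t,\ti\zeta_0,z)=i\bar{\ti\b}\,\ti E^\#(t,z)$. It gives $\ti E(t,z)\approx\ti\a'\,\g(\ti v)\ti w^{-1/2}\sin[t(z-\ti\zeta_0)]$. Since $t(z-\ti\zeta_0)=t(z-\ti\zeta)-\pi/2$, we have $\sin[t(z-\ti\zeta_0)]=-\cos[t(z-\ti\zeta)]$. The sign is absorbed into $\ti\a$, which yields \eqref{eqRepEt} with the same $\vartheta$.
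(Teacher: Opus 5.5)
Your proof is correct and follows the paper's argument essentially line by line. Both apply (U2) in the slot $\l=\zeta$ via \eqref{eqDslot}, conjugate to obtain \eqref{eqMappE}, divide by $i\b$ with $\a=-i\bar\b/|\b|$, normalize through \eqref{eqBetaNorm}, bound $|\sin[t(z-\zeta)]|\leq e^{16C}$, and absorb the Dirichlet sign into $\ti\a$.

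Your explicit restriction $\d\leq\eta_0^2/4$, which gives $\theta^{-1/2}=1\pm K\d^{1/2}\eta_0^{-1}$, is exactly what the paper assumes tacitly when it writes $1/|\b|=(1\pm\d^{1/2}\eta_0^{-1})\g(v)\sqrt w$. You can therefore drop the side discussion of the complementary regime. In that regime, $v\geq\uv$ does not in fact exclude $\sinh[2v]\lesssim\d w$.
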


\begin{proof}
By (U2) at $\l=\zeta$ and \eqref{eqDslot},
$$\sup_{z\in Q(s,16C/t)}\left|i\bar\b E^\#(t,z)-\frac{\sin[t(\bar\zeta-z)]}{w}\right|\leq\d .$$
Taking complex conjugates, replacing $\bar z$ by $z$ (the box is invariant under conjugation) and using $\overline{E^\#(t,\bar z)}=E(t,z)$ and $\overline{\sin[t(\bar\zeta-\bar z)]}=\sin[t(\zeta-z)]$, we obtain
\begin{equation}
\sup_{z\in Q(s,16C/t)}\left|i\b\,E(t,z)-\frac{\sin[t(z-\zeta)]}{w}\right|\leq\d .
\label{eqMappE}
\end{equation}
By \eqref{eqBetaNorm} (here $v\geq\uv$ is used), $\frac1{|\b|}=(1\pm\d^{1/2}\eta_0^{-1})\sqrt{\frac{2w}{\sinh[2v]}}=(1\pm\d^{1/2}\eta_0^{-1})\,\g(v)\sqrt w$. Dividing \eqref{eqMappE} by $i\b$ and putting $\a=-i\bar\b/|\b|$, we get, on $Q(s,8C/t)$,
$$\left|E(t,z)-(1\pm\d^{1/2}\eta_0^{-1})\,\a\,\frac{\g(v)}{\sqrt w}\sin[t(z-\zeta)]\right|\leq(1+\d^{1/2}\eta_0^{-1})\,\d\,\g(v)\sqrt w .$$
Since $|t\,\Im(z-\zeta)|\leq16C$ on $Q(s,8C/t)$, $\left|\sin[t(z-\zeta)]\right|\leq e^{16C}$, so replacing $(1\pm\d^{1/2}\eta_0^{-1})$ by $1$ changes the model by at most $\d^{1/2}\eta_0^{-1}\,\g(v)e^{16C}/\sqrt w\leq\d^{1/2}\,e^{16C}\eta_0^{-3/2}\g(v)$; together with $\sqrt w\leq\eta_0^{-1/2}$ from \eqref{eqxy2} this gives \eqref{eqRepE}. The Dirichlet case is identical, using $\cos[t(z-\ti\zeta)]=\sin[t(z-\ti\zeta_0)+\pi/2]\cdot(\pm1)$ with the sign absorbed into $\ti\a$.
\end{proof}

We will refer to $\vartheta=\vartheta(\d,\eta_0,C)$ of \eqref{eqRepE} as the {\it representation error}; note $\vartheta\to0$ as $\d\to0$ for fixed $C,\eta_0$. Note also that \eqref{eqMappE} in the proof above holds at every depth: it is only the multiplicative normalization of the amplitude that requires $v\geq\uv$.

\begin{lemma}\label{lemChi}
Suppose that $E(t,\cdot)$ has no zeros in $Q(s,c/t)$ and $\ti E(t,\cdot)$ has no zeros in $Q(s,c'/t)$, for some $c,c'\in\{C,2C,4C,8C\}$, and let $\a,\ti\a$ be the unimodular constants of the exponential models of Proposition \ref{propU3}, 2). Then $\chi:=\a\bar{\ti\a}$ satisfies
$$\left|\Im\chi-\sqrt{w\ti w}\right|\ \leq\ K\,\epsilon_c\!\left(\min(c,c')\right)\eta_0^{-1/2},$$
with $\epsilon_c$ defined in \eqref{eqEpsU3} and an absolute constant $K$.
\end{lemma}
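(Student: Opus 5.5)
The identity to exploit is \eqref{eqImE}, $\Im(E\bar{\ti E})\equiv1$ on $\R$, evaluated at a single real point of the box where both exponential models are available. Put $c_0=\min(c,c')$ and $I_0=\R\cap Q(s,c_0/t)$. Both resonance-free hypotheses hold on $Q(s,c_0/t)$, since this box is contained in $Q(s,c/t)$ and in $Q(s,c'/t)$. Proposition \ref{propU3}, 2), combined with part 1) as recorded in \eqref{eqEpsU3}, gives on $I_0$
$$E(t,x)=\frac{\a}{\sqrt w}e^{-itx}+r(x),\qquad \ti E(t,x)=\frac{\ti\a}{\sqrt{\ti w}}e^{-itx}+\ti r(x),$$
with $|r|\leq\epsilon_c$ and $|\ti r|\leq\epsilon_{c'}$. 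For the statement as written, I will take the accuracy of each model to be the worse of the two, which is the $\epsilon_c$ attached to $c_0$; this is what $\epsilon_c(\min(c,c'))$ denotes.

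First I expand $E\bar{\ti E}$ at any $x\in I_0$. The main term is
$$\frac{\a\bar{\ti\a}}{\sqrt{w\ti w}}\,e^{-itx}\overline{e^{-itx}}=\frac{\chi}{\sqrt{w\ti w}}.$$
The cross terms are bounded by
$$\frac{|\ti r|}{\sqrt w}+\frac{|r|}{\sqrt{\ti w}}+|r\ti r|\ \leq\ K\epsilon\,\eta_0^{-1/2}+\epsilon^2,$$
using $w,\ti w\geq\eta_0$ from \eqref{eqxy2}; here $\epsilon=\epsilon_c(c_0)$. Taking imaginary parts and using $\Im(E\bar{\ti E})=1$ gives
$$\left|\frac{\Im\chi}{\sqrt{w\ti w}}-1\right|\leq K\epsilon\eta_0^{-1/2}+\epsilon^2.$$

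Next I multiply by $\sqrt{w\ti w}$, which is at most $1$ by the Alexandrov--Clark bound recalled before \eqref{eqxy2}. This yields
$$|\Im\chi-\sqrt{w\ti w}|\leq K\epsilon\eta_0^{-1/2}+\epsilon^2.$$
The quadratic term is harmless. If $\epsilon\leq1$, then $\epsilon^2\leq\epsilon\leq\epsilon\eta_0^{-1/2}$, because $\eta_0\leq1/2$. If $\epsilon>1$, the claim is trivial: $|\Im\chi|\leq1$ and $\sqrt{w\ti w}\leq1$, so the left side is at most $2\leq 2\epsilon\eta_0^{-1/2}$. So the inequality holds with an absolute $K$ in every case.

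The only delicate point is bookkeeping of which accuracy applies to which model. Proposition \ref{propU3} is stated for the box $Q(s,c/t)$ with the error $K_{11}(\d+e^{-2v_{\min}})\eta_0^{-1}$, where $v_{\min}$ is the smallest depth in the larger box $Q(s,8C^\sharp/t)$. By part 1), $v_{\min}\geq c-1$ for $E$ and $v_{\min}\geq c'-1$ for $\ti E$, so both errors are at most $\epsilon_c(c_0)$. Since the proposition requires $c,c'\in\{C,2C,4C,8C\}$, I apply it with the given $c$ and $c'$ and restrict each model to the common subinterval $I_0$. That subinterval is nonempty because it contains $s$. No zeros or Rouché arguments are needed beyond what Proposition \ref{propU3} already supplies.
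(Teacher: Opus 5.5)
Your proposal is correct and follows the paper's proof: insert both exponential models of Proposition \ref{propU3}, 2) into $\Im(E\bar{\ti E})\equiv1$ at a common real point (the paper takes $x=s$), bound the cross terms by $K\epsilon_{\min(c,c')}\eta_0^{-1/2}$ using \eqref{eqxy2}, and multiply through by $\sqrt{w\ti w}\leq1$. Your case split on $\epsilon\leq1$ versus $\epsilon>1$ for the quadratic term is equivalent to the paper's preliminary reduction to $\epsilon_{\min(c,c')}\eta_0^{-1/2}\leq1$.
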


\begin{proof}
Note that $\epsilon_{\min(c,c')}\geq\max\left(\epsilon_c,\epsilon_{c'}\right)$; we may assume $\epsilon_{\min(c,c')}\eta_0^{-1/2}\leq1$, the claim being trivial otherwise. Insert the models of Proposition \ref{propU3}, 2) into \eqref{eqImE} at the point $x=s$:
$$\begin{gathered}1=\Im\left(E\bar{\ti E}\right)=\Im\left(\frac{\a\bar{\ti\a}}{\sqrt{w\ti w}}\right)+O\left(\epsilon_{\min(c,c')}\left(\frac1{\sqrt w}+\frac1{\sqrt{\ti w}}\right)+\epsilon_{\min(c,c')}^2\right)\\
=\frac{\Im\chi}{\sqrt{w\ti w}}+O\left(K\epsilon_{\min(c,c')}\,\eta_0^{-1/2}\right),\end{gathered}$$
and it remains to multiply by $\sqrt{w\ti w}\leq1$.
\end{proof}

\begin{lemma}[pairing]\label{lemPair}
There exist absolute constants $C_0\geq4\pi$, $K_6$ and $K_{12}$ with the following properties.

The constants $C_0$ and $K_{12}$ determine the least admissible scale: given $\eta_0\in(0,\tfrac12]$, we put
$$C_1(\eta_0)\ :=\ C_0+K_{12}\left(1+\log(1/\eta_0)\right)$$
and assume $C\geq C_1(\eta_0)$; the constant $K_6$ appears in the estimates \eqref{eqDepthMatch} and \eqref{eqDeltaPin} below. For every such $C$ and $\eta_0$ there is a threshold $\d_0(C,\eta_0)>0$ such that the following holds whenever $\d\leq\d_0(C,\eta_0)$; here and in the similar statements below the thresholds imposed on $\d$ are always taken smaller than $\eta_0/4$, as required by Proposition \ref{propU3}.

Let $s\in\RR(T,\eta_0,C,\d)$ and $t\geq T$, and suppose that the pair $(s,t)$ is not $\d$-shallow. Let $c$ be one of the values $C$, $2C$, $4C$, and suppose that $E(t,\cdot)$ has a zero $\zeta=x_0-iy_0$ in $Q(s,c/t)$; let $v=ty_0$ be its depth (since $(s,t)$ is not $\d$-shallow, necessarily $v\geq\uv$). Then:

1) $\ti E(t,\cdot)$ has zeros in $Q(s,2c/t)$; let $\ti\zeta_0$ be one nearest to $x_0$ and $\ti v$ its depth. Then
\begin{equation}
\left|\frac{\sinh[2\ti v]}{\sinh[2v]}-1\right|\ \leq\ K_6\,\vartheta\,\eta_0^{-3},
\label{eqDepthMatch}
\end{equation}

2) with $\d_p:=t\,(\Re\ti\zeta-x_0)$, where $\ti\zeta=\ti\zeta_0-\pi/(2t)$,
\begin{equation}
\left|\cos^2\d_p-w\ti w\right|\ \leq\ K_6\,\vartheta\,\eta_0^{-3}.
\label{eqDeltaPin}
\end{equation}

The same holds with the roles of $E$ and $\ti E$ exchanged.
\end{lemma}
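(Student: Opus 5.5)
The plan is to test the representations of Section \ref{secRep} against the exact identity \eqref{eqImE}, $\Im(E\bar{\ti E})\equiv1$ on $\R$. The key fact is that the product of two models is a trigonometric polynomial in $u=t(x-x_0)$, so \eqref{eqImE} forces its oscillating harmonic to be negligible. Throughout I work on the real interval $I'=\R\cap Q(s,2c/t)$. Since $c\leq4C$, this interval lies inside $Q(s,8C/t)$, where Lemma \ref{lemRep} applies to $E$ (here $v\geq\uv$ because $(s,t)$ is not $\d$-shallow). Since $c\geq C\geq4\pi$, $I'$ contains a full period $\pi/t$ of $u\mapsto e^{2iu}$, so I can extract Fourier coefficients by averaging over one period.

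\emph{Step 1: existence of zeros of $\ti E$.} Suppose $\ti E(t,\cdot)$ had no zeros in $Q(s,2c/t)$. Proposition \ref{propU3}, 2) then gives $\ti E(t,x)=\ti\a e^{-itx}/\sqrt{\ti w}\pm\epsilon_{2c}$ on $I'$. Substituting this and \eqref{eqRepE} into \eqref{eqImE}, and writing $\sin(u+iv)=\frac1{2i}(e^{iu-v}-e^{-iu+v})$, the product $E\bar{\ti E}$ splits into a constant term plus a term proportional to $e^{2itx}$. The latter has modulus $\g(v)e^{v}/(2\sqrt{w\ti w})\geq\g(v)e^v/2$. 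Since $\g(v)e^{v}=\sqrt2e^v/\sqrt{\sinh2v}$ is bounded below by an absolute constant, this term does not vanish. Averaging $\Im(E\bar{\ti E})\,e^{-2itx}$ over a period, this coefficient must instead be bounded by the errors, which are of order $\vartheta\g(v)\eta_0^{-1/2}+\epsilon_{2c}\g(v)\eta_0^{-1/2}$. Because $\g(v)\leq\g(\uv)\sim\d^{-1/4}$, these errors are smaller than the main term once $\d\leq\d_0(C,\eta_0)$ and $C\geq C_1(\eta_0)$; the latter condition makes $e^{-2(2c-1)}\eta_0^{-K}$ small, and this is where $K_{12}\log(1/\eta_0)$ enters. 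This contradiction shows that $\ti E$ has zeros in $Q(s,2c/t)$. Take $\ti\zeta_0$ nearest to $x_0$; non-shallowness gives $\ti v\geq\uv$, so \eqref{eqRepEt} holds on $Q(s,8C/t)$.

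\emph{Step 2: the product of the two models.} With $\ti\zeta=\ti\zeta_0-\pi/(2t)$ we have $t(x-\ti\zeta)=u-\d_p+i\ti v$. Hence, on $I'$,
$$\sin(u+iv)\,\overline{\cos(u-\d_p+i\ti v)}=\tfrac12\left[\sin\bigl(2u-\d_p+i(v-\ti v)\bigr)+\sin\bigl(\d_p+i(v+\ti v)\bigr)\right].$$
Put $\chi'=\a\bar{\ti\a}$ and $N=\g(v)\g(\ti v)/\sqrt{w\ti w}$. Then \eqref{eqImE} reads
$$1=\tfrac N2\,\Im\bigl(\chi'\sin(2u-\d_p+ia)\bigr)+\tfrac N2\,\Im\bigl(\chi'\sin(\d_p+ib)\bigr)+\mathrm{err},\qquad a=v-\ti v,\ b=v+\ti v,$$
where $|\mathrm{err}|\leq K\vartheta\,\g(v)\g(\ti v)\eta_0^{-1}(e^{16C}$-type factors already absorbed in $\vartheta)$. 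The key point is that all errors carry the factor $\g(v)\g(\ti v)$, which is exactly the size of $N$ up to powers of $\eta_0$. After dividing by $N$ the errors are therefore relatively $O(\vartheta\eta_0^{-K})$ regardless of how large the $\g$'s are.

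\emph{Step 3: reading off the coefficients.} Expanding the first term gives
$$\Im\bigl(\chi'\sin(\theta+ia)\bigr)=\Im\chi'\,\sin\theta\cosh a+\Re\chi'\,\cos\theta\sinh a .$$
Taking its $\sin\theta$ and $\cos\theta$ Fourier coefficients yields $|\Im\chi'|\cosh a$ and $|\Re\chi'\sinh a|$, both $\lesssim\vartheta\eta_0^{-K}/N$-relative. Since $|\chi'|=1$, the first bound gives $|\Im\chi'|$ small, hence $|\Re\chi'|\approx1$, and the second then gives $|\sinh(v-\ti v)|\lesssim\vartheta\eta_0^{-K}$. Converting this into a bound on $\sinh2\ti v/\sinh2v-1$ requires care when $v$ is small, because the ratio $\sinh2\ti v/\sinh2v$ is sensitive to $v-\ti v$ precisely then. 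I expect this to be the main obstacle. What I would try first: in the shallow-but-admissible range use the additive normalization \eqref{eqBetaAdd} directly, i.e. compare $|\b|^2$ and $|\ti\b|^2$ through the exact identity \eqref{eqDet2i} evaluated at $\zeta$, which gives $E^\#(\zeta)\ti E(\zeta)=-2i$ and hence $2\b\,\ti E(\zeta)=-2i$. This relates $\sinh2\ti v$ to $\sinh2v$ multiplicatively rather than through the difference $v-\ti v$. This yields \eqref{eqDepthMatch}. Finally, the constant term gives $\tfrac N2\Re\chi'\,\cos\d_p\sinh b=1+O(\cdot)$. Substituting $\ti v\approx v$ and $\g(v)^2\sinh2v=2$, this becomes $|\cos\d_p|=\sqrt{w\ti w}\,(1+O(\vartheta\eta_0^{-K}))$; squaring, and using $w\ti w\leq1$, gives \eqref{eqDeltaPin}. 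The roles of $E$ and $\ti E$ are symmetric, using Proposition \ref{propU3} for $E$ in Step 1.
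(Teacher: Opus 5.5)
There is a genuine gap in Steps 2--3. It sits exactly where the lemma is needed: resonances near the top of the shallow layer, $\uv\leq v\ll1$. Lemma \ref{lemElev}, 2) and Lemma \ref{lemShallow} use \eqref{eqDepthMatch} at such depths.

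Fourier coefficients of $\Im(E\bar{\ti E})$ on $\R$ give only \emph{additive} information at precision $\vartheta$. After dividing by $N$, your errors are of size $\vartheta e^{\max(v,\ti v)}\eta_0^{-K}$. The cross terms carry $\cosh v$ and $\cosh\ti v$; these are not absorbed in $\vartheta$, and they already cost you the absolute constant $K_6$. So you obtain only $|\Im\chi'|,\,|\sinh(v-\ti v)|\lesssim\vartheta\eta_0^{-K}$.

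Near the layer this is too weak. There $\uv\approx\tfrac12\d^{1/2}$, while $\vartheta=K_5\d^{1/2}e^{16C}\eta_0^{-3/2}$, and \eqref{eqDepthMatch} at depth $v$ means $|\ti v-v|\lesssim v\,\vartheta\eta_0^{-3}$. Your bound therefore gives no control of $\ti v/v$.

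The constant term fails there as well, which you did not notice. Divided by $N$, it reads $\sqrt{w\ti w}\sqrt{v\ti v}\approx\tfrac12\Im\chi'\sin\d_p\cosh b+\tfrac12\Re\chi'\cos\d_p\sinh b+O(\vartheta\eta_0^{-K})$ with $\sinh b\approx v+\ti v$. The main terms are of order $v$, while the $\Im\chi'$-term you dropped and the error are of order $\vartheta\eta_0^{-K}$. So $\cos\d_p$ is not pinned.

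Your fallback is half of the right idea. The identity $2\b\,\ti E(t,\zeta)=-2i$, together with \eqref{eqBetaNorm} and the cosine model evaluated at $\zeta$, gives one relation: $\cos^2\d_p+\sinh^2\nu\approx R\,w\ti w$, with $R=\sinh[2\ti v]/\sinh[2v]$ and $\nu=\ti v-v$. This relation still contains the unknown $\d_p$. Closing it via the constant term (``substituting $\ti v\approx v$'') is circular and, as shown above, unavailable for small $v$.

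The missing step is the second exact relation $E(t,\ti\zeta_0)\,\ti E^\#(t,\ti\zeta_0)=2i$. Use it with $\ti E^\#(t,\ti\zeta_0)=2\ti\b$ normalized by Lemma \ref{lemBeta}, and with the sine model of $E$ evaluated at $\ti\zeta_0$, where $t(\ti\zeta_0-\zeta)=\d_p+\pi/2-i\nu$. This gives $\cos^2\d_p+\sinh^2\nu\approx w\ti w/R$. Comparing the two relations yields $R^2=1\pm K\vartheta\eta_0^{-3}$, i.e.\ \eqref{eqDepthMatch}; then $\sinh^2\nu\leq K\vartheta\eta_0^{-3}$, and \eqref{eqDeltaPin} follows.

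This is uniform in $v\geq\uv$. The errors in both relations are $O(\vartheta\eta_0^{-1}R^{\pm1/2})$ independently of $v$, while both sides are of order $w\ti w\geq\eta_0^2$. This is the paper's argument, and it needs no Fourier analysis on $\R$.

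Step 1 also needs a correction, though it survives. The frequency-$2t$ harmonic comes from the recessive part $e^{iu-v}$ of $\sin(u+iv)$. Its amplitude is therefore $\g(v)e^{-v}/(2\sqrt{w\ti w})$, not $\g(v)e^{v}/(2\sqrt{w\ti w})$. You also cannot make the errors absolutely small via $\g(v)\lesssim\d^{-1/4}$, because the part $e^{-2(2c-1)}\eta_0^{-1}$ of $\epsilon_{2c}$ does not decay as $\d\to0$. Instead, keep $\g(v)$ as a common factor: the errors are $O\bigl(\g(v)\eta_0^{-1/2}(\vartheta+\epsilon_{2c}\cosh v)\bigr)$. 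Then $v\leq c$, $\d\leq\d_0$ and $C\geq C_1(\eta_0)$ make $\epsilon_{2c}e^{2c}\eta_0^{-1/2}$ small. With this fix, your direct exclusion on $Q(s,2c/t)$ is a valid and slightly shorter alternative to the paper's route, which excludes zeros on $Q(s,8C/t)$ and then localizes $\ti\zeta_0$ a posteriori.
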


\begin{proof}
Suppose first that $\ti E(t,\cdot)$ has a zero in $Q(s,8C/t)$, and let $\ti\zeta_0$ be one nearest to $x_0$. By Lemma \ref{lemRep} for the Dirichlet family and Rouch\'e's theorem, the zeros of $\ti E$ in $Q(s,8C/t)$ lie within $K\vartheta\eta_0^{-1/2}/t$ of the zero set of the cosine model (near each of its zeros the model has modulus at least $c_1\g(\ti v)\,r/\sqrt{\ti w}$ on the circle of radius $r/t$, which exceeds the error $\vartheta\g(\ti v)$ once $r\geq K\vartheta\eta_0^{-1/2}$), and the model's consecutive zeros are spaced by $\pi/t$; hence $t|\Re\ti\zeta_0-x_0|\leq\pi/2+1$ for $\d\leq\d_0$. That $\ti\zeta_0$ in fact lies in $Q(s,2c/t)$ will follow a posteriori: its real part satisfies $|\Re\ti\zeta_0-s|\leq(\pi/2+1)/t+c/t\leq2c/t$ since $c\geq C\geq4\pi$, and its depth is comparable to $v\leq c$ by \eqref{eqDepthMatch}, which is proved below using only $\ti\zeta_0\in Q(s,8C/t)$.

By \eqref{eqDet2i} evaluated at $z=\zeta$ (where $E(\zeta)=0$) and at $z=\ti\zeta_0$ (where $\ti E(\ti\zeta_0)=0$),
\begin{equation}
-\ti E(t,\zeta)\,E^\#(t,\zeta)=2i,\qquad E(t,\ti\zeta_0)\,\ti E^\#(t,\ti\zeta_0)=2i .
\label{eqWzeros}
\end{equation}
By Lemma \ref{lemBeta}, $|E^\#(t,\zeta)|=2|\b|=(1\pm\d)\sqrt{2\sinh[2v]/w}$ and $|\ti E^\#(t,\ti\zeta_0)|=(1\pm\d)\sqrt{2\sinh[2\ti v]/\ti w}$, so \eqref{eqWzeros} gives
\begin{equation}
|\ti E(t,\zeta)|=(1\pm2\d)\,\g(v)\sqrt{w},\qquad
|E(t,\ti\zeta_0)|=(1\pm2\d)\,\g(\ti v)\sqrt{\ti w}.
\label{eqSizes}
\end{equation}
On the other hand, evaluating the representations of Lemma \ref{lemRep} of each family at the zero of the other family, with $\d_p$ as in the statement and $\nu:=\ti v-v$: since
$t(\zeta-\ti\zeta)=-\d_p+i\nu$ and $t(\ti\zeta_0-\zeta)=\d_p+\pi/2-i\nu$, and $\sin(\d_p+\pi/2-i\nu)=\cos(\d_p-i\nu)$, both evaluations produce the quantity $|\cos(\d_p-i\nu)|$, $|\cos(\d_p-i\nu)|^2=\cos^2\d_p+\sinh^2\nu$:
$$\begin{gathered}|\ti E(t,\zeta)|=\frac{\g(\ti v)}{\sqrt{\ti w}}\left|\cos(\d_p-i\nu)\right|\pm\vartheta\g(\ti v),\\
|E(t,\ti\zeta_0)|=\frac{\g(v)}{\sqrt{w}}\left|\cos(\d_p-i\nu)\right|\pm\vartheta\g(v).\end{gathered}$$
Comparing with \eqref{eqSizes} and writing $R:=\sinh[2\ti v]/\sinh[2v]=\g^2(v)/\g^2(\ti v)$, we obtain the two relations
$$\begin{gathered}\cos^2\d_p+\sinh^2\nu=(1\pm4\d)\,R\,w\ti w\pm K\vartheta\eta_0^{-1}\sqrt R,\\
\cos^2\d_p+\sinh^2\nu=(1\pm4\d)\,\frac{w\ti w}{R}\pm \frac{K\vartheta\eta_0^{-1}}{\sqrt R},\end{gathered}$$
where the additive terms come from the errors $\vartheta\g(\ti v)$, $\vartheta\g(v)$ of the cross-evaluations, divided by the model amplitudes. We may assume $R\geq1$, exchanging the roles of the two families in the derivation otherwise. Comparing the two relations and dividing by $w\ti w\geq\eta_0^2$,
$$(1-4\d)R^2\ \leq\ (1+4\d)+K\vartheta\eta_0^{-3}\left(R^{3/2}+\sqrt R\right),$$
which first shows $R\leq K$ (if $R\geq2$, the right-hand side is dominated by $K\vartheta\eta_0^{-3}R^{3/2}$, whence $R\leq K(\vartheta\eta_0^{-3})^2+K$, and $\d_0$ makes this $\leq K$), and then, with $R$ bounded,
$$R^2=1\pm K\vartheta\eta_0^{-3},$$
hence \eqref{eqDepthMatch}; since $|\log R|\geq2|\nu|$, also $\sinh^2\nu\leq K\vartheta\eta_0^{-3}$, and then $\cos^2\d_p=w\ti w\pm K\vartheta\eta_0^{-3}$, which is \eqref{eqDeltaPin}.

It remains to exclude the case when $\ti E$ has no zeros in $Q(s,8C/t)$. In that case Proposition \ref{propU3}, 2) with $c=8C$ provides the exponential model for $\ti E$ on $\R\cap Q(s,8C/t)$, with the error $\epsilon_{8C}=K_{11}(\d+e^{-2(8C-1)})\eta_0^{-1}$, while Lemma \ref{lemRep} provides the sine model for $E$. Insert both into \eqref{eqImE} on the interval $\R\cap Q(s,c/t)$: with $\theta=t(x-x_0)$,
$$\begin{gathered}
E(t,x)\overline{\ti E(t,x)}=\frac{\a\bar{\ti\a}\,\g(v)}{\sqrt{w\ti w}}\,\sin(\theta+iv)\,e^{itx}+O\left(\vartheta\,\g(v)\,\eta_0^{-1/2}\right)\\
=\frac{\a\bar{\ti\a}\,\g(v)}{2i\sqrt{w\ti w}}\left[e^{i(2tx-tx_0)}e^{-v}-e^{itx_0}e^{v}\right]+O\left(\vartheta\,\g(v)\,\eta_0^{-1/2}\right).
\end{gathered}$$
The second term in the bracket does not depend on $x$, while the first is a harmonic of frequency $2t$ whose contribution to $\Im\left(E\bar{\ti E}\right)$ has amplitude
$$\frac{\g(v)e^{-v}}{2\sqrt{w\ti w}}\ \geq\ \frac{\g(v)\,e^{-4C}}{2}\ >\ 0$$
(recall $\sqrt{w\ti w}\leq1$ and $v\leq4C$, so $e^{-v}\geq e^{-4C}$; adjust $c_2$ accordingly). As $x$ runs over $\R\cap Q(s,c/t)$, an interval of length at least $2C/t>8\pi/t$, the harmonic sweeps full periods, so $\Im\left(E\bar{\ti E}\right)$ would fluctuate by at least $\g(v)e^{-4C}$, while it is identically $1$ and the error terms fluctuate by at most $K\left(\vartheta+\epsilon_{8C}\,e^{4C}\right)\g(v)\eta_0^{-1/2}$: both the main amplitude and the errors carry the factor $\g(v)$, so the comparison is uniform in the depth. For $\vartheta\leq c_2e^{-4C}\eta_0^{1/2}$ and $\epsilon_{8C}\leq c_2e^{-8C}\eta_0^{1/2}$ --- the first holds for $\d\leq\d_0(C,\eta_0)$ small enough, and the second for such $\d$ together with $C\geq C_1(\eta_0)$, which makes $e^{-16C+2}\eta_0^{-1}\leq e^{-8C}\eta_0^{1/2}$ once $K_{12}$ is chosen large enough --- this is a contradiction. Hence $\ti E$ has zeros in $Q(s,4C/t)$, and the nearest one lies in $Q(s,2c/t)$ as shown above.

The exchanged version is proved by the same argument with the two families swapped.
\end{proof}

\section{The scattering coefficients of a time interval} \label{secPMT}

For an interval $(t_1,t_2)\subset \R_+$ the scattering functions $a_{t_1 \to t_2}$  and $b_{t_1 \to t_2}$ are the functions $a, b$ defined as in Section 2 for the system \eqref{eqDS} whose potential is equal to $f(t+t_1)$ on $(0, t_2-t_1)$ and to $0$ elsewhere. Equivalently, with the transfer matrix from $t_1$ to $t_2$,
$$M_{t_1 \to t_2}(z)=\begin{pmatrix} A_{t_1 \to t_2}(z) & B_{t_1 \to t_2}(z) \\ C_{t_1 \to t_2}(z) & D_{t_1 \to t_2}(z)\end{pmatrix}
=M(t_2,z)M^{-1}(t_1,z),
$$
one puts
$$E_{t_1 \to t_2}=A_{t_1 \to t_2}-iC_{t_1 \to t_2},\qquad \ti E_{t_1 \to t_2}=B_{t_1 \to t_2}-iD_{t_1 \to t_2},$$
$$\begin{gathered}
a_{t_1 \to t_2}(z)=\frac 12 e^{i(t_2-t_1)z}(E_{t_1 \to t_2}(z)+i \ti E_{t_1 \to t_2}(z)),\\
b_{t_1 \to t_2}(z)=\frac 12 e^{i(t_2-t_1)z}(E_{t_1 \to t_2}(z)-i \ti E_{t_1 \to t_2}(z)).
\end{gathered}$$
Since the determinant relation \eqref{det=1} holds for the shifted system as well, $|a_{t_1\to t_2}|^2-|b_{t_1\to t_2}|^2=1$ on $\R$, so that $\log|a_{t_1\to t_2}|\geq0$ on $\R$. Parseval's identity for $a_{t_1 \to t_2}$ becomes
\begin{equation}||\log |a_{t_1 \to t_2}|\ ||_{L^1(\R)}=\frac\pi2\,|| f||^2_{L^2((t_1,t_2))}.
\label{eqParsBlock}\end{equation}

Passing to the basis of the Hermite-Biehler functions and using \eqref{eqDet2i} to invert,
$$
\begin{pmatrix} E_{t_1 \to t_2} & \ti E_{t_1 \to t_2} \\ E^\#_{t_1 \to t_2} & \ti E^\#_{t_1 \to t_2} \end{pmatrix}
=$$
$$=\begin{pmatrix} E(t_2,z) &  \ti E(t_2,z) \\    E^\#(t_2,z) & \ti E^\#(t_2,z)\end{pmatrix}\frac 1{2i} \begin{pmatrix} \ti E^\#(t_1,z) & -\ti E(t_1,z) \\  -E^\#(t_1,z) & E(t_1,z) \end{pmatrix}
\begin{pmatrix} 1 & -i \\ 1 & i \end{pmatrix},
$$
and forming $a_{t_1\to t_2}$, $b_{t_1\to t_2}$ from the entries gives the exact formulas
\begin{equation}
a_{t_1 \to t_2}(z)=\frac {e^{i(t_2-t_1)z}}{2i}\left( E(t_2,z)\ti E^\#(t_1,z)-\ti E(t_2,z)E^\#(t_1,z) \right),
\label{eqablock}\end{equation}
\begin{equation}
b_{t_1 \to t_2}(z)=\frac {e^{i(t_2-t_1)z}}{2i}\left( \ti E(t_2,z)E(t_1,z)-E(t_2,z)\ti E(t_1,z)\right) .
\label{fin11}\end{equation}

\section{The two-endpoint estimate}\label{secTE}

In this section we prove the estimate that connects the presence of a resonance near a point $s$ with the $L^2$ mass of the potential on the forward octave: if $E(t,\cdot)$ has a zero in a box around $s$, then $\log|a_{t\to2t}|$ is bounded below on a fixed portion of the box, with the bound improving as the zero becomes shallow.

{\bf Standing assumptions.} Throughout the section, $s\in\RR(T,\eta_0,C,\d)$ and $t\geq T$; the scale satisfies $C\geq C_1(\eta_0)$ and the accuracy $\d\leq\d_0(C,\eta_0)$, as in Lemma \ref{lemPair}; and neither $(s,t)$ nor $(s,2t)$ is $\d$-shallow, so that Lemmas \ref{lemRep} and \ref{lemPair} apply to all the zeros involved. We write $t_1=t$, $t_2=2t$ and
$$\eta:=w(s)\ti w(s)\in[\eta_0^2,1],\qquad J(s):=\R\cap Q(s,C/t).$$

{\it The near endpoint.} We assume {\it presence}: $E(t_1,\cdot)$ has a zero $\zeta_1$ in $Q(s,2C/t_1)$, of depth $v_1\leq2C$. By Lemma \ref{lemPair} with $c=2C$, $\ti E(t_1,\cdot)$ then has a matched zero. We write $\d_1$ for the phase offset of \eqref{eqDeltaPin} at $t_1$, and $\theta_1=t_1(p-\Re\zeta_1)$ for $p\in J(s)$.

{\it The far endpoint} may be in one of two configurations. In the {\it resonant case}, $E(t_2,\cdot)$ has a zero $\zeta_2$ in $Q(s,4C/t_2)$, of depth $v_2\leq4C$; by Lemma \ref{lemPair} with $c=4C$ the Dirichlet zero is again matched, and $\d_2$ denotes the offset of \eqref{eqDeltaPin} at $t_2$. In the {\it free case}, $E(t_2,\cdot)$ has no zeros in $Q(s,4C/t_2)$. Then, by Lemma \ref{lemPair} applied with the roles of the two families exchanged, $\ti E(t_2,\cdot)$ has no zeros in $Q(s,2C/t_2)$ either, and both exponential models of Proposition \ref{propU3}, 2) hold at $t_2$ --- at $c=4C$ for $E$ and at $c=2C$ for $\ti E$, with the errors $\epsilon_{2C}$ of \eqref{eqEpsU3}. In this case we set $v_2=\infty$, define $\d_2$ through $\chi$ of Lemma \ref{lemChi} by $\chi=ie^{-i\d_2}$, so that $\cos\d_2=\Im\chi=\sqrt\eta+O(\epsilon_{2C}\,\eta_0^{-1/2})$, and assume in addition that $v_1\leq C$ --- the only configuration in which the estimate below will be used.

Finally, set
$$S:=\sinh [2v_1]\sinh [2v_2],\qquad \nu:=v_2-v_1 .$$

\begin{lemma}[two-endpoint estimate]\label{lemSweep}
After enlarging the absolute constants $C_0$ and $K_{12}$ of Lemma \ref{lemPair} and decreasing the thresholds $\d_0(C,\eta_0)$, if necessary, the following holds under the standing assumptions and the endpoint configurations described above:
\begin{equation}
\left\langle |b_{t\to2t}|^2\right\rangle_{J(s)}\ \geq\ \frac{\cosh[2\nu]}{2^8\,S},
\qquad
\sup_{J(s)}|b_{t\to2t}|^2\ \leq\ \frac{2^5\cosh[2\nu]\,e^{8C}}{\eta\,S},
\label{eqSweepMoments}
\end{equation}
where $\langle\cdot\rangle_{J(s)}$ denotes the mean over $J(s)$, and in the free case $v_2=\infty$ the quantity $\cosh[2\nu]/S$ is understood as its limit $e^{-2v_1}/(2\sinh[2v_1])$.

Consequently, on a subset of $J(s)$ of measure at least $2^{-14}\,\eta_0^2\,e^{-8C}\,|J(s)|$,
\begin{equation}
\log|a_{t\to2t}|\ \geq\ \Phi(v_1,C):=\frac12\log\left(1+\frac{2^{-13}e^{-8C}}{v_1}\right).
\label{eqSweepLevel}
\end{equation}
\end{lemma}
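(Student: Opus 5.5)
The plan is to work directly from the exact formula \eqref{fin11}, $b_{t\to2t}(x)=\frac{e^{itx}}{2i}\left(\ti E(t_2,x)E(t_1,x)-E(t_2,x)\ti E(t_1,x)\right)$ on $J(s)$. Into it we insert the models of Lemma \ref{lemRep} at both endpoints. At $t_1$ we use the sine model for $E$ at $\zeta_1$ and the cosine model for $\ti E$ at the matched zero. In the resonant case at $t_2$ we do the same; in the free case we use the exponential models of Proposition \ref{propU3}, 2) instead.

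\textbf{Relating the unimodular constants.} By Lemma \ref{lemPair}, the depths of the matched zeros agree up to $K\vartheta\eta_0^{-3}$ in $\sinh[2v]$, and the offsets satisfy $\cos^2\d_j=\eta\pm K\vartheta\eta_0^{-3}$. The relative phase $\a\bar{\ti\a}$ at each endpoint is then pinned down by inserting the two models into \eqref{eqImE} and matching the constant harmonic, as in the proof of Lemma \ref{lemChi}. After this substitution, up to errors of size $K\vartheta\eta_0^{-3}\g(v_1)\g(v_2)$, the quantity $|b_{t\to2t}(x)|$ becomes an explicit trigonometric polynomial in $\theta_1=t(x-\Re\zeta_1)$. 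Its building blocks are $\sin(\theta+iv_1)$, $\cos(\theta+iv_1-\d_1)$, $\sin(2\theta'+iv_2)$, and so on, and the whole expression carries the amplitude factor $\g(v_1)\g(v_2)/\sqrt{w\ti w}$.

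\textbf{Computing the moments.} Expand every factor into exponentials $e^{\pm i\theta}$, $e^{\pm 2i\theta}$. The mean of $|b|^2$ over $J(s)$ is then, up to an $O(1/C)$ boundary term, the sum of the squared moduli of the Fourier coefficients. The reason is that $J(s)$ has length $2C/t$, which is many periods. Among these coefficients there is a harmonic of frequency $3t$ (or $t$) with coefficient of size $\g(v_1)\g(v_2)e^{v_1+v_2}/4$ times a factor bounded below. Alternatively, one uses the Wronskian identity \eqref{eqImE} at both times to see that the ``free'' combination cancels while a term $\propto\cosh[2\nu]$ survives. Since $\g(v_1)^2\g(v_2)^2=4/S$, this yields the lower bound $\cosh[2\nu]/(2^8S)$. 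The upper bound follows by bounding each term by $|\sin|,|\cos|\leq\cosh(v)\le e^{v}$ on the box, using $v_j\le4C$ and dividing by $\eta$. In the free case, the $t_2$ factors are bounded exponentials, and the same expansion gives the limiting form $e^{-2v_1}/(2\sinh[2v_1])$.

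\textbf{Main obstacle.} The hard part is showing that the leading Fourier coefficient does not cancel. A priori, the two products $\ti E_2E_1$ and $E_2\ti E_1$ could interfere destructively. Here the pinned offsets $\cos\d_1,\cos\d_2\approx\sqrt\eta$ from \eqref{eqDeltaPin} and Lemma \ref{lemChi} are essential: they fix the relative phases so that the coefficient equals $\frac12\cosh$-type expressions with a nonnegative sum, rather than a difference. Equivalently, the argument should reduce to the identity $\Im(E\bar{\ti E})\equiv1$ at both times. The error terms are absorbed by shrinking $\d_0$ so that $K\vartheta\eta_0^{-3}e^{8C}\le2^{-10}$, and by enlarging $C_0,K_{12}$ so that $\epsilon_{2C}e^{4C}$ is small.

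\textbf{Deducing \eqref{eqSweepLevel}.} This step is a Paley--Zygmund argument. Let $m=\langle|b|^2\rangle$ and $U=\sup|b|^2$. Then the set where $|b|^2\ge m/2$ has relative measure at least $m/(2U)\ge \eta e^{-8C}/2^{14}\ge2^{-14}\eta_0^2e^{-8C}$. On that set $|a|^2=1+|b|^2\ge1+\cosh[2\nu]/(2^9S)$. We have $S\le\sinh[2v_1]\,e^{2v_2}/2$ and $\cosh[2\nu]\ge e^{2(v_2-v_1)}/2$, so $\cosh[2\nu]/S\ge e^{-2v_1}/\sinh[2v_1]\ge e^{-8C}/(2v_1)$, using $\sinh[2v_1]\le2v_1e^{2v_1}$ and $v_1\le2C$. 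This gives $\log|a|\ge\Phi(v_1,C)$.
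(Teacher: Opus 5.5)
There is a genuine gap at the step you yourself single out as the main obstacle. Nothing in the proposal shows that some coefficient of the model for $b_{t\to2t}$ is bounded below, and the mechanisms you invoke do not supply this. After inserting the models, $|b_{t\to2t}|=\frac{\g(v_1)\g(v_2)}{2\sqrt{w\ti w}}|B|$ up to errors, where
$B=\chi_2'C_2S_1-\chi_1'S_2C_1$. Here $\chi_2'=\a_1\ti\a_2$ and $\chi_1'=\a_2\ti\a_1$ combine unimodular constants from the two \emph{different} times. In modulus, the coefficients of $B$ at the frequencies $3t$ and $t$ are $\frac14e^{v_1+v_2}|b_1|$ and $\frac14e^{\pm\nu}|b_{2,3}|$, with
$b_1=\chi_2'e^{i\d_2}-\chi_1'e^{i\d_1}$, $b_2=\chi_2'e^{i\d_2}+\chi_1'e^{-i\d_1}$ and $b_3=\chi_2'e^{-i\d_2}+\chi_1'e^{i\d_1}$.

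\begin{itemize}
\item The bound \eqref{eqDeltaPin} controls only $|\cos\d_k|$, so it cannot ``fix the relative phases''.
\item Your claim that the frequency-$3t$ coefficient is $\g(v_1)\g(v_2)e^{v_1+v_2}/4$ times a factor bounded below is false. Indeed $b_1=0$ whenever $\chi_1'e^{i\d_1}=\chi_2'e^{i\d_2}$, and nothing in the hypotheses excludes this. The frequency-$t$ coefficients carry $e^{\pm\nu}$, not $e^{v_1+v_2}$.
\item Pinning $\a_k\bar{\ti\a}_k\approx\pm1$ at each endpoint, as you suggest, does determine the only relevant ratio $\chi_2'/\chi_1'=(\a_1\bar{\ti\a}_1)\overline{(\a_2\bar{\ti\a}_2)}$. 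But it helps only if you also prove that $\Re(\a_k\bar{\ti\a}_k)\cos\d_k$ has the same sign at both endpoints; otherwise $\chi_2'=-\chi_1'$, $\d_2=-\d_1$ gives $b_2=b_3=0$.
\item With model errors proportional to $\g(v)$, the constant harmonic of \eqref{eqImE} at one endpoint is determined only up to $O(\vartheta\eta_0^{-1/2}/\sinh v)$. This is not small for depths near $\uv$, where $\vartheta/\sinh v\approx2K_5e^{16C}\eta_0^{-3/2}$.
\end{itemize}

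The missing idea, which makes all phase information unnecessary, is the pair of exact identities $b_2-b_1=2\chi_1'\cos\d_1$ and $b_3+b_1=2\chi_2'\cos\d_2$. Together with $|\cos\d_k|=\sqrt\eta+o(1)$ they give a dichotomy: either $|b_1|\geq\sqrt\eta/2$, or $|b_2|,|b_3|\geq\sqrt\eta+o(1)$. In either branch the largest coefficient of $B$ is at least $\frac{\sqrt\eta}8e^{|\nu|}$. Via $e^{2|\nu|}\geq\cosh[2\nu]$ and $\g^2(v_1)\g^2(v_2)=4/S$, this is exactly where $\cosh[2\nu]/S$ comes from. In the free case the first identity alone handles the two coefficients proportional to $e^{v_1}b_1$ and $e^{-v_1}b_2$.

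Two bookkeeping points also need repair.
\begin{itemize}
\item In the free case, your requirement that $\epsilon_{2C}e^{4C}$ be small can never be met, since $\epsilon_{2C}\geq K_{11}e^{2-4C}\eta_0^{-1}$ by \eqref{eqEpsU3}. The far-endpoint error, measured against the guaranteed coefficient $\frac{\sqrt\eta}4e^{-v_1}$ of $B$, is of order $\epsilon_{2C}e^{2v_1}$ times a power of $\eta_0^{-1}$. It is the additional free-case hypothesis $v_1\leq C$ that reduces this to $(\d e^{2C}+e^{-2C})$ times a power of $\eta_0^{-1}$. That quantity is small for $\d\leq\d_0$ and $C\geq C_1(\eta_0)$ with $K_{12}$ enlarged.
\item The supremum bound needs $e^{v_1+v_2}=e^{|\nu|}e^{2\min(v_1,v_2)}\leq e^{|\nu|}e^{4C}$, i.e.\ $v_1\leq2C$, to produce $e^{8C}\cosh[2\nu]$. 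The bound $v_j\leq4C$ alone is not enough.
\end{itemize}

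The remaining steps are correct and agree with the paper: the near-orthogonality of the harmonics over $J(s)$, and the Paley--Zygmund passage to \eqref{eqSweepLevel} using $\cosh[2\nu]/S\geq e^{-2v_1}/\sinh[2v_1]\geq e^{-8C}/(2v_1)$.
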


\begin{proof}
Let $p\in J(s)$, so that in \eqref{fin11} $E^\#=\bar E$ and $\ti E^\#=\bar{\ti E}$. Write
$$\theta_k=t_k(p-\Re\zeta_k),\quad S_k=\sin(\theta_k+iv_k),\quad C_k=\cos(\theta_k-\d_k+iv_k),\quad k=1,2.$$
By Lemmas \ref{lemRep} and \ref{lemPair}, at each endpoint with presence,
\begin{equation}
\begin{gathered}
E(t_k,p)=\a_k\,\frac{\g(v_k)}{\sqrt{w}}\,S_k+O(\vartheta')\g(v_k),\\
\ti E(t_k,p)=\ti\a_k\,\frac{\g(v_k)}{\sqrt{\ti w}}\,C_k+O(\vartheta')\g(v_k),
\end{gathered}
\label{eqEndpApp}
\end{equation}
where $\a_k,\ti\a_k$ are unimodular constants and
$$\vartheta'\ :=\ K_7\left(\vartheta\eta_0^{-3}\right)^{1/2}e^{4C}:$$
here the Dirichlet depth has been replaced by $v_k$ using \eqref{eqDepthMatch}, which perturbs the depth by $|\nu_k|\leq K(\vartheta\eta_0^{-3})^{1/2}$ and hence the factor $\g(\ti v_k)\cos[\cdot+i\ti v_k]$ by at most $K(\vartheta\eta_0^{-3})^{1/2}e^{4C}\g(v_k)$ (the derivatives of the model in the depth are bounded by $e^{4C}$ times its amplitude on the box), and the Dirichlet center has been written as $\Re\zeta_k+\d_k/t_k$. If $t_2$ is in the free case, \eqref{eqEndpApp} holds at $t_2$ with $S_2$, $C_2$, $\g(v_2)$ replaced by $ie^{-i\theta_2}$, $e^{-i(\theta_2-\d_2)}$, $1$, where $\theta_2=t_2(p-x_2)$ with an arbitrary reference point $x_2$ (absorbed into the unimodular constants), by Proposition \ref{propU3}, 2) and the definition of $\d_2$; the error terms of \eqref{eqEndpApp} at the far endpoint are then $O\left(\vartheta'+\epsilon_{2C}\right)$ in place of $O(\vartheta')\g(v_2)$, and \eqref{eqRbound} below holds with $\vartheta'$ replaced by $\vartheta'+K\epsilon_{2C}$ and $\g(v_2)$ by $1$. Below the two cases are treated together, the free case corresponding to $v_2=\infty$.

Multiplying the relations \eqref{eqEndpApp} and inserting them into \eqref{fin11}, we obtain
\begin{equation}
2ie^{-i(t_2-t_1)p}\,b_{t\to2t}(p)=\frac{\g(v_1)\g(v_2)}{\sqrt{w\ti w}}\,
\left[\chi_2' \,C_2S_1-\chi_1'\, S_2C_1\right]+R(p),
\label{eqModelB}
\end{equation}
where $\chi_2'=\a_1\ti\a_2$ and $\chi_1'=\a_2\ti\a_1$ are unimodular constants, on which no assumption is made, and $R(p)$, the sum of the products containing at least one error term of \eqref{eqEndpApp}, satisfies, since $|S_k|,|C_k|\leq\cosh v_k\leq e^{4C}$ on $J(s)$,
\begin{equation}
|R(p)|\ \leq\ K\,\vartheta'\,e^{4C}\eta_0^{-1/2}\,\g(v_1)\g(v_2),\qquad p\in J(s).
\label{eqRbound}
\end{equation}

Write $B(p):=\chi_2'C_2S_1-\chi_1'S_2C_1$ and expand the four factors into exponentials,
$$S_k=\frac i2\left[e^{v_k}e^{-i\theta_k}-e^{-v_k}e^{i\theta_k}\right],\qquad
C_k=\frac 12\left[e^{v_k}e^{-i(\theta_k-\d_k)}+e^{-v_k}e^{i(\theta_k-\d_k)}\right].$$
Then $B$ is a trigonometric polynomial in $p$ with the four frequencies $\mp(t_1+t_2)$, $\pm(t_1-t_2)$:
$$B=\frac i4\left[e^{v_1+v_2}b_1e^{-i(\theta_1+\theta_2)}-e^{v_2-v_1}b_2e^{i(\theta_1-\theta_2)}\right.$$
$$\left.+\,e^{v_1-v_2}b_3e^{-i(\theta_1-\theta_2)}-e^{-v_1-v_2}b_4e^{i(\theta_1+\theta_2)}\right],$$
$$\begin{gathered}b_1=\chi_2'e^{i\d_2}-\chi_1'e^{i\d_1},\qquad b_2=\chi_2'e^{i\d_2}+\chi_1'e^{-i\d_1},\\
b_3=\chi_2'e^{-i\d_2}+\chi_1'e^{i\d_1},\qquad b_4=\chi_2'e^{-i\d_2}-\chi_1'e^{-i\d_1}.\end{gathered}$$
The identities
$$b_2-b_1=2\chi_1'\cos\d_1,\qquad b_3+b_1=2\chi_2'\cos\d_2,$$
together with $|\cos\d_k|=\sqrt\eta+O(\vartheta\eta_0^{-4})$ from \eqref{eqDeltaPin} (and $|\cos\d_2|=\sqrt\eta+O(\epsilon_{2C}\eta_0^{-1/2})$ from Lemma \ref{lemChi} in the free case), give a dichotomy: either $|b_1|\geq\sqrt\eta/2$, or
$$\begin{gathered}|b_2|\ \geq\ 2|\cos\d_1|-|b_1|\ \geq\ \sqrt\eta+O(\vartheta\eta_0^{-4}),\\
|b_3|\ \geq\ 2|\cos\d_2|-|b_1|\ \geq\ \sqrt\eta+O(\vartheta\eta_0^{-4}).\end{gathered}$$
Since the coefficient of $B$ multiplying $b_1$ carries the factor $e^{v_1+v_2}\geq e^{|\nu|}$, and those multiplying $b_2$, $b_3$ carry $e^{\nu}$ and $e^{-\nu}$, in either case the largest of the four coefficients of $B$ is at least $\frac{\sqrt\eta}{8}\,e^{|\nu|}$ in absolute value, once $\d\leq\d_0(C,\eta_0)$ and $C\geq C_1(\eta_0)$ make the $O(\vartheta\eta_0^{-4})$- and $O(\epsilon_{2C}\eta_0^{-1/2})$-terms smaller than $\sqrt\eta/4$; for the latter, $K_{11}e^{-4C+2}\eta_0^{-3/2}\leq\eta_0/4$ once $K_{12}$ is large enough, and the $\d$-part of $\epsilon_c$ is absorbed into $\d_0$.

The pairwise gaps of the four frequencies are $2t_1$, $2t_2$, $2(t_1+t_2)$ and $2(t_2-t_1)$, all at least $t_1=t_2/2$. Since $\left|\int_Ie^{i\omega p}\,dp\right|\leq2/\omega$ for every interval $I$, and $|J(s)|=2C/t$, the mean over $J(s)$ of each cross product of two distinct harmonics is at most $2/C$ times the product of the moduli of their coefficients; hence, with $c_1,\dots,c_4$ denoting the coefficients of $B$,
\begin{multline*}\frac1{|J(s)|}\int_{J(s)}|B|^2\,dp\ \geq\ \sum_j|c_j|^2-\frac 2C\Big(\sum_j|c_j|\Big)^2\ \geq\\ \geq\ \Big(1-\frac{8}C\Big)\max_j|c_j|^2\ \geq\ \frac{\eta}{2^7}\,e^{2|\nu|}\ \geq\ \frac{\eta}{2^7}\,\cosh[2\nu]\end{multline*}
for $C\geq16$; in the free case the terms containing $e^{-v_2}$ are absent and the same dichotomy applies to $b_1$, $b_2$ alone, with the same conclusion in the limiting normalization. For the supremum, each $|b_j|\leq2$, so
\begin{multline*}\sup_J|B|\ \leq\ \frac12\left(e^{v_1+v_2}+e^{\nu}+e^{-\nu}+e^{-v_1-v_2}\right)\ \leq\\ \leq\ 2\,e^{v_1+v_2}\ \leq\ 2\,e^{|\nu|}e^{2\min(v_1,v_2)}\ \leq\ 2\,e^{|\nu|}e^{4C},\end{multline*}
since $\min(v_1,v_2)\leq v_1\leq2C$; hence $\sup_J|B|^2\leq8\cosh[2\nu]\,e^{8C}$.

By \eqref{eqModelB} and \eqref{eqRbound}, since $\g^2(v_1)\g^2(v_2)=4/S$,
$$|b_{t\to2t}(p)|^2=\frac{\left|B(p)+O(\vartheta'e^{4C}\eta_0^{-1/2})\right|^2}{\eta\,S}.$$
For $\d\leq\d_0(C,\eta_0)$ small enough the error term changes the mean and the supremum of $|B|^2$ by relative amounts less than $\tfrac12$ (for the mean, use $\vartheta'e^{4C}\eta_0^{-1/2}\leq\sqrt\eta/2^5$). In the free case the error carries the factor $e^{v_1}$ while the guaranteed coefficient of $B$ may be as small as $\tfrac{\sqrt\eta}4e^{-v_1}$, so the required smallness reads $K\left(\vartheta'+\epsilon_{2C}\right)e^{2v_1}\eta_0^{-1/2}\leq\sqrt\eta/2^5$: since $v_1\leq C$ in the free case, the $\vartheta'$-part follows from $\d\leq\d_0$, and the $\epsilon_c$-part from $C\geq C_1(\eta_0)$ --- $KK_{11}e^{-4C+2}\eta_0^{-3/2}e^{2C}\leq\eta_0/2^5$ once $K_{12}$ is large enough. Thus \eqref{eqSweepMoments} follows.

Finally, the subset of $J(s)$ where $|b_{t\to2t}|^2\geq\frac12\langle|b_{t\to2t}|^2\rangle$ has measure at least
$$\frac{\langle|b|^2\rangle/2}{\sup_J|b|^2}\,|J(s)|\ \geq\ \frac{\cosh[2\nu]/(2^9S)}{2^5\cosh[2\nu]e^{8C}/(\eta S)}\,|J(s)|\ \geq\ 2^{-14}\,\eta_0^2\,e^{-8C}\,|J(s)| ,$$
and on that subset, since
$$\sinh[2v_2]=\sinh[2v_1]\cosh[2\nu]+\cosh[2v_1]\sinh[2\nu]\leq\cosh[2\nu]\,e^{2v_1},$$
$\sinh[2v_1]\leq2v_1e^{2v_1}$ and $v_1\leq2C$,
$$|b_{t\to2t}|^2\ \geq\ \frac{\cosh[2\nu]}{2^9\,S}\ \geq\ \frac1{2^9\sinh[2v_1]e^{2v_1}}\ \geq\ \frac1{2^{10}v_1e^{4v_1}}\ \geq\ \frac{2^{-13}e^{-8C}}{v_1}$$
(in the free case $e^{-2v_1}/(2^{10}\sinh[2v_1])\geq e^{-8C}/(2^{11}v_1)$, with the same conclusion), whence
$\log|a_{t\to2t}|=\frac12\log(1+|b_{t\to2t}|^2)\geq\Phi(v_1,C)$.
\end{proof}

\begin{proposition}\label{propFirstEntry}
Under the hypotheses of Lemma \ref{lemSweep}: for every $t\geq T$ and $0<v\leq C$,
\begin{equation}
\begin{gathered}\big|\big\{s\in\RR(T,\eta_0,C,\d):\ (s,t)\text{ and }(s,2t)\text{ are not $\d$-shallow and}\\
E(t,\cdot)\text{ has a zero in }Q(s,2C/t)\text{ of depth}\leq v\big\}\big|\ \leq\
\frac{2^{17}\,\pi\,e^{8C}}{\eta_0^{2}\,\Phi(v,C)}\ \|f\|^2_{L^2((t,2t))}.\end{gathered}
\label{eqFirstEntry}
\end{equation}
\end{proposition}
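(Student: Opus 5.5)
The plan is a covering argument combined with the two-endpoint estimate of Lemma \ref{lemSweep} and Parseval's identity \eqref{eqParsBlock} for the block $(t,2t)$. Denote by $F$ the set on the left of \eqref{eqFirstEntry}. For every $s\in F$ the standing assumptions of Section \ref{secTE} hold at $(s,t)$. There is presence at the near endpoint with a zero $\zeta_1$ of depth $v_1\leq v\leq C$. At the far endpoint we are in either the resonant or the free case, and the extra restriction $v_1\leq C$ needed in the free case is automatic. Lemma \ref{lemSweep} then gives a set $G_s\subset J(s)=\R\cap Q(s,C/t)$ with $|G_s|\geq2^{-14}\eta_0^2e^{-8C}|J(s)|$. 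On $G_s$ we have $\log|a_{t\to2t}|\geq\Phi(v_1,C)\geq\Phi(v,C)$, because $\Phi(\cdot,C)$ is decreasing in its first argument.

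We then cover $F$ by the intervals $J(s)$, $s\in F$. These all have the same length $2C/t$, so a Vitali-type selection on the line produces points $s_j\in F$ with pairwise disjoint intervals $J(s_j)$ such that $F\subset\bigcup_j 3J(s_j)$. If $F$ is not compact, we first pass to a compact subset of almost full measure, or simply take a maximal disjoint family, which is countable. This yields $|F|\leq3\sum_j|J(s_j)|$. The sets $G_{s_j}$ are disjoint because the $J(s_j)$ are, and $\log|a_{t\to2t}|\geq0$ everywhere on $\R$. Therefore
$$\Phi(v,C)\sum_j|G_{s_j}|\leq\int_\R\log|a_{t\to2t}|\,dp=\frac\pi2\|f\|^2_{L^2((t,2t))}.$$
Combining this with $|G_{s_j}|\geq2^{-14}\eta_0^2e^{-8C}|J(s_j)|$ gives
$$|F|\leq3\cdot2^{14}\cdot\frac\pi2\cdot\frac{e^{8C}}{\eta_0^{2}\,\Phi(v,C)}\,\|f\|^2_{L^2((t,2t))}\leq\frac{2^{17}\pi e^{8C}}{\eta_0^{2}\Phi(v,C)}\|f\|^2_{L^2((t,2t))},$$
which is \eqref{eqFirstEntry}.

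The main point requiring care is the claim that Lemma \ref{lemSweep} really applies at every $s\in F$. The near zero must lie in $Q(s,2C/t)$, which is exactly the hypothesis. The far endpoint must fall into one of the two admissible configurations relative to the box $Q(s,4C/t_2)$, and this dichotomy is exhaustive. Non-shallowness at both $t$ and $2t$ is part of the definition of $F$. The remaining technical point is measurability of $F$; if needed, we replace $F$ by its outer measure, since the covering argument only uses the pointwise property of each $s$. I expect no further obstacle, because the constants leave room: the factor $3$ from Vitali and $\pi/2$ from Parseval fit inside $2^{17}\pi/2^{14}=8\pi$.
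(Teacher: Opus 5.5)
Your argument is correct and is essentially the paper's proof. You apply Lemma \ref{lemSweep} at each point of the set; $v_1\le v\le C$ makes the free case admissible, and the monotonicity of $\Phi$ gives the uniform level $\Phi(v,C)$. You then cover by the intervals $J(s)$ and sum against the block Parseval identity \eqref{eqParsBlock}. The only difference is cosmetic: the paper uses a cover by intervals $J(s_i)$ of multiplicity at most $2$, while your disjoint Vitali subfamily with threefold enlargement costs a factor $3$ instead of $2$, which still fits within the constant $2^{17}\pi$.
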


\begin{proof}
Denote the set on the left-hand side by $S_t$. For $s\in S_t$ the near-endpoint depth satisfies $v_1\leq v$, so by Lemma \ref{lemSweep} and the monotonicity of $\Phi$ in its first argument, $\log|a_{t\to2t}|\geq\Phi(v,C)$ on a subset of $J(s)$ of measure at least $2^{-14}\eta_0^2e^{-8C}|J(s)|$. Choose a cover of $S_t$ by intervals $J(s_i)$, $s_i\in S_t$, of multiplicity at most $2$. Summing over the cover and using $\log|a_{t\to2t}|\geq0$ on $\R$ together with \eqref{eqParsBlock},
$$\frac12\,|S_t|\cdot 2^{-14}\eta_0^2e^{-8C}\cdot\Phi(v,C)\ \leq\ \int_\R\log|a_{t\to2t}|\ =\ \frac\pi2\,\|f\|^2_{L^2((t,2t))},$$
which is \eqref{eqFirstEntry}.
\end{proof}

\section{Maximal estimates under a dyadic mass restriction}\label{secMax}

For $n\geq 0$ let $W_n=[2^n,2^{n+1})$ and put
$$M_n=\int_{W_n}|f(t)|\,dt,\qquad m_n=\int_{W_n}f^2(t)\,dt,\qquad M=\sup_{n\geq 0}M_n.$$
We will say that the potential $f$ satisfies the {\it dyadic mass restriction} if
\begin{equation} M<\infty. \label{eqDMR}\end{equation}
Since $M_n\leq 2^{n/2}\sqrt{m_n}$ by the Cauchy-Schwarz inequality, \eqref{eqDMR} is a restriction on the behavior of $f$ near infinity only. Note that this class of potentials is not contained in $\bigcup_{p<2}L^p(\R_+)$.

Throughout this section we write
$$u(t,s)=\log|a(t,s)|$$
and define
$$A(s) = \log\left(\frac{1}{2} \sqrt{\frac{1}{w(s)} + \frac{1}{\tilde{w}(s)} + 2} \right):$$
by the elevation identity \eqref{eqAid}, $e^{2A(s)}$ is the value of $|a(t,s)|^2$ produced by the limiting moduli $|E(t,s)|^2\to w^{-1}(s)$, $|\ti E(t,s)|^2\to\ti w^{-1}(s)$ along resonance-free times; Corollary \ref{corConv} below shows that $A(s)$ is indeed the almost everywhere limit of $u(t,s)$. Note that $A\geq0$ and, on the regular sets, $A(s)\leq\frac12\log(1/\eta_0)$ by \eqref{eqxy2}. We define the maximal function
$$u^*(s)=\sup_{t>0}\log|a(t,s)| .$$
Recall that $u(t,s)\geq 0$ and that, by the non-linear Parseval identity,
\begin{equation}\int_\R \log|a(t,s)|\,ds=\frac\pi2\,\|f\|^2_{L^2((0,t))}.\label{eqPars}\end{equation}
In particular, for each fixed $t$, Chebyshev's inequality gives $|\{s:\log|a(t,s)|>\lambda\}|\leq \pi\|f\|_2^2/(2\lambda)$; the content of the estimates below is the supremum in $t$.

{\bf Reduction to the lacunary maximal function.} We first observe that $\log|a|$ is Lipschitz in $t$ with respect to the $L^1$-mass of the potential.

\begin{lemma}\label{lemLip} Let $f\in L^1_{loc}(\R_+)$ be real. Then for any $0\leq t_1<t_2$ and any $s\in\R$,
\begin{equation}\big|\log|a(t_2,s)|-\log|a(t_1,s)|\big|\leq \int_{t_1}^{t_2}|f(\tau)|\,d\tau.\label{eqLip}\end{equation}
\end{lemma}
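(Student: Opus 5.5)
The plan is to differentiate $\log|a(t,s)|$ in $t$ directly, using the equation \eqref{eqME} for the Hermite--Biehler functions, and to bound the derivative pointwise by $|f(t)|$; then \eqref{eqLip} follows by integrating from $t_1$ to $t_2$. Both $E$ and $\ti E$ satisfy \eqref{eqME}, so we first derive equations for $a$ and $b$. We have $\EE=e^{itz}E$, hence $\pa_t\EE=iz\EE-izE e^{itz}+fe^{itz}E^\#=fe^{itz}E^\#$. For real $z=s$, $e^{itz}E^\#=e^{2its}\,\overline{\EE}$, so $\pa_t\EE=f e^{2its}\overline{\EE}$, and the same holds for $\ti\EE$. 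Forming $a=(\EE+i\ti\EE)/2$ and $b=(\EE-i\ti\EE)/2$, we get
$$\pa_t a=f e^{2its}\,\frac{\overline{\EE}+i\overline{\ti\EE}}{2}=fe^{2its}\,\overline{b},\qquad \pa_t b=fe^{2its}\,\overline a ,$$
since $\overline{\EE}+i\overline{\ti\EE}=\overline{\EE-i\ti\EE}=2\bar b$. This is the familiar NLFT ODE.

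Next we compute the derivative of the modulus:
$$\pa_t|a|^2=2\Re(\bar a\,\pa_t a)=2f\,\Re\!\left(e^{2its}\bar a\bar b\right),\qquad\text{so}\qquad \big|\pa_t|a|^2\big|\leq 2|f|\,|a||b|.$$
Since $|b|=\sqrt{|a|^2-1}\leq|a|$ on $\R$, this gives $\big|\pa_t\log|a|\big|=\big|\pa_t|a|^2\big|/(2|a|^2)\leq|f|\,|b|/|a|\leq|f|$. The quantity $|a|\geq1$ never vanishes, so $\log|a|$ is well defined. The functions are only absolutely continuous in $t$, because $f$ is merely locally integrable and the equations hold almost everywhere; integrating the a.e. derivative bound therefore yields \eqref{eqLip}.

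The only delicate points are justifying absolute continuity and checking the phase bookkeeping for $e^{itz}E^\#$ on the real line. For the first, $M(t,z)$ solves a linear ODE with $L^1_{loc}$ coefficients, so it is absolutely continuous in $t$. This property transfers to $a$, and hence to $\log|a|$, since $|a|\geq1$. No genuine obstacle is expected. One could even sharpen the bound to $|f|\,|\nlhat f_t|$, but that refinement is not needed.
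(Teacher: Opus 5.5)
Your proposal is correct and follows essentially the same route as the paper's proof. You derive $\pa_t a=f e^{2its}\bar b$ from \eqref{eqME} at real $s$, bound $|\pa_t\log|a||\le|f|\,|b|/|a|\le|f|$ using $|a|^2-|b|^2=1$, and integrate; your remark on absolute continuity in $t$ makes explicit a point the paper leaves implicit.
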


\begin{proof} Let $\EE(t,z)=e^{itz}E(t,z)$ and $\ti\EE(t,z)=e^{itz}\ti E(t,z)$, so that $a=(\EE+i\ti\EE)/2$ and $b=(\EE-i\ti\EE)/2$ by \eqref{eqab}. By \eqref{eqME}, $\pa_t\EE=f e^{itz}E^\#$. At a real spectral point $z=s$ we have $E^\#(t,s)=\overline{E(t,s)}$, and therefore $e^{ist}E^\#(t,s)=e^{2ist}\overline{\EE(t,s)}$, so that
$$\pa_t\EE(t,s)=f(t)e^{2ist}\overline{\EE(t,s)},\qquad \pa_t\ti\EE(t,s)=f(t)e^{2ist}\overline{\ti\EE(t,s)}.$$
Hence
$$\pa_t a(t,s)=f(t)e^{2ist}\overline{b(t,s)},\qquad \pa_t b(t,s)=f(t)e^{2ist}\overline{a(t,s)}.$$
Since $|a|^2-|b|^2\equiv 1$ on $\R$, $|a|\geq\max(1,|b|)$, and therefore for a.e. $t$
$$\big|\pa_t \log|a(t,s)|\big|\leq \frac{|\pa_t a(t,s)|}{|a(t,s)|}= |f(t)|\,\frac{|b(t,s)|}{|a(t,s)|}\leq |f(t)|.$$
Integrating from $t_1$ to $t_2$ we obtain \eqref{eqLip}.
\end{proof}

\begin{corollary}\label{corLac} Let $f\in L^2(\R_+)$ be real and satisfy \eqref{eqDMR}. Then for every $s\in\R$,
$$\sup_{n\geq 0}\log|a(2^n,s)| \ \leq\ u^*(s)\ \leq\ \sup_{n\geq 0}\log|a(2^n,s)| \ +\ M',$$
where $M'=\max\left(M, \int_0^1|f|\right)\leq \max\left(M,\|f\|_2\right)$.
\end{corollary}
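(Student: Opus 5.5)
The plan is to obtain both inequalities directly from Lemma \ref{lemLip} and the definition of $u^*$. The left inequality is immediate: each $\log|a(2^n,s)|$ is one of the quantities over which the supremum defining $u^*(s)=\sup_{t>0}\log|a(t,s)|$ is taken, so their supremum over $n\geq0$ is at most $u^*(s)$.

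For the right inequality, fix $s\in\R$ and $t>0$, and split into two cases according to the position of $t$ relative to the dyadic grid. If $t\geq1$, choose $n\geq0$ with $t\in W_n=[2^n,2^{n+1})$. Lemma \ref{lemLip} with $t_1=2^n$, $t_2=t$ gives
$$\log|a(t,s)|\ \leq\ \log|a(2^n,s)|+\int_{2^n}^{t}|f|\ \leq\ \log|a(2^n,s)|+M_n\ \leq\ \sup_{k\geq0}\log|a(2^k,s)|+M .$$
If $0<t<1$, compare instead with the left endpoint $t_1=0$. There $M(0,z)$ is the identity, so $E(0,z)=1$ and $\ti E(0,z)=-i$, and hence $a(0,s)=(1+i\cdot(-i))/2=1$ and $\log|a(0,s)|=0$. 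Lemma \ref{lemLip} then gives $\log|a(t,s)|\leq\int_0^1|f|\leq M'$, which is at most $\sup_n\log|a(2^n,s)|+M'$ because $\log|a|\geq0$. In both cases $\log|a(t,s)|\leq\sup_n\log|a(2^n,s)|+M'$, and taking the supremum over $t>0$ yields the right inequality.

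The remaining claim is $M'\leq\max(M,\|f\|_2)$, which follows from Cauchy--Schwarz on the unit interval: $\int_0^1|f|\leq\|f\|_{L^2(0,1)}\leq\|f\|_2$. The only point requiring any care is the normalization $a(0,s)=1$ from the initial condition, and that is a direct computation.
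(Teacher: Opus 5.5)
Your proposal is correct and follows essentially the same argument as the paper. You apply Lemma \ref{lemLip} on each block $[2^n,2^{n+1})$ against its left endpoint, and for $t<1$ against $t_1=0$ using $a(0,\cdot)\equiv1$, with Cauchy--Schwarz on $(0,1)$ giving $M'\leq\max(M,\|f\|_2)$; you are slightly more explicit than the paper in noting that $\log|a|\geq0$ is what absorbs the $t<1$ case into the right-hand side.
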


\begin{proof} If $t\in[2^n,2^{n+1})$ then $|\log|a(t,s)|-\log|a(2^n,s)||\leq M_n\leq M$ by Lemma \ref{lemLip}. If $t\in(0,1)$ then, since $a(0,\cdot)\equiv 1$, Lemma \ref{lemLip} gives $\log|a(t,s)|\leq \int_0^t|f|\leq\int_0^1 |f|\leq \|f\|_{L^2((0,1))}$.
\end{proof}

\begin{remark}[depth of resonances under the dyadic mass restriction]\label{remFloor}
The computation in the proof of Lemma \ref{lemLip}, applied to $E$ in place of $a$, gives $\pa_t\log|E(t,x)|=f(t)\cos[2\arg E(t,x)]$, whence, since $E(0,\cdot)\equiv1$,
$$e^{-\int_0^t|f|}\ \leq\ |E(t,x)|\ \leq\ e^{\int_0^t|f|}\qquad\text{ for all }x\in\R,\ t>0 ;$$
we will refer to this bound, and to its localized form $\big|\log|E(t_2,x)|-\log|E(t_1,x)|\big|\leq\int_{t_1}^{t_2}|f|$, as the {\it value rigidity} of $E$.
Under \eqref{eqDMR}, $\int_0^t|f|\leq M'+M(1+\log_2t)$ for $t\geq1$. If $\zeta=x_0-iy_0$ is a zero of $E(t,\cdot)$ of depth $v=ty_0\leq1$, then, by the Phragm\'en--Lindel\"of principle and Bernstein's inequality applied at the heights $y\in[0,y_0]$, $|E(t,x_0)|=|E(t,x_0)-E(t,\zeta)|\leq y_0\,t\,e^{1+\int_0^t|f|}$, and comparison with the lower bound at $x_0$ gives the following lower bound for the depth, which requires none of the assumptions of Section \ref{secU}:
\begin{equation}
v\ \geq\ c\,e^{-2(M'+M)}\,t^{-2M/\ln 2},
\label{eqFloorCrude}
\end{equation}
it is valid at every point of $\R$ and for every zero. The decay in $t$ in \eqref{eqFloorCrude} cannot be removed in general: potentials of Wigner--von Neumann type, admissible under \eqref{eqDMR}, drive a zero towards their resonant frequency at a polynomial rate.

The bound \eqref{eqFloorCrude} enters the paper only through the threshold $T_\d$ of \eqref{eqTdelta} below, which guarantees that no zero can be $\d$-shallow before the time $T_\d$; the same bound holds for the zeros of $\ti E$, since $|\ti E(0,\cdot)|\equiv1$ and the argument is identical. On the regular sets the dynamics of the zeros gives much more --- for $M<\frac{\ln2}2$ the depth of a resonance can only grow in the multiplicative sense while the representations of Section \ref{secRep} apply, and for every $M$ it can shrink by at most a bounded factor per octave --- but none of this is used below: the quantitative statement the paper needs, that the entry of a resonance into the shallow layer requires $L^2$ mass of $f$, is proved directly in Lemma \ref{lemShallow}.

\end{remark}

Thus, under the dyadic mass restriction, the maximal estimates below are equivalent to their lacunary versions. For general $f\in L^2(\R_+)$ no such reduction seems to be available, and the lacunary weak-type maximal inequality for $\log|a|$ itself remains open.

{\bf Elevation.} The next lemma quantifies the mechanism by which $u(t,s)$ can deviate from $A(s)$ in either direction: a deviation of size $\lambda$ forces a resonance at depth exponentially small in $\lambda$. The additive error in it is the quantity
\begin{equation}
\omega=\omega(\d,\eta_0,C):=K_4\left(\d^{1/2}\,e^{16C}\,\eta_0^{-6}+e^{-2C}\,\eta_0^{-1}\right),
\label{eqOmega}
\end{equation}
where $K_4$ is an absolute constant. The first term tends to $0$ as $\d\to0$ for fixed $C$ and $\eta_0$; the second, inherited from the accuracy $\epsilon_c$ \eqref{eqEpsU3} of the resonance-free models of Proposition \ref{propU3}, does not depend on $\d$: $\omega\to0$ requires $\d\to0$ {\it and} $C\to\infty$.

\begin{lemma}\label{lemElev}
There exists an absolute constant $K_4$ --- the constant of the error term \eqref{eqOmega} --- with the following property. Fix $\eta_0\in(0,\tfrac12]$ and $C\geq C_1(\eta_0)$, with $C_1$ as in Lemma \ref{lemPair}; then there is a threshold $\d_1(C,\eta_0)>0$ such that the following holds for all $\d\leq\d_1(C,\eta_0)$.

Let $s\in\RR(T,\eta_0,C,\d)$ and $t\geq T$, and suppose that the pair $(s,t)$ is not $\d$-shallow. Then the quantity $\omega=\omega(\d,\eta_0,C)$ of \eqref{eqOmega} satisfies $\omega\leq1$, and:

1) if $E(t,\cdot)$ has no zeros in $Q(s,2C/t)$, then $|u(t,s)-A(s)|\leq\omega$;

2) if $E(t,\cdot)$ has zeros in $Q(s,2C/t)$ and $v\leq2C$ is the smallest of their depths, then
$$u(t,s)\ \leq\ A(s)+\frac12\log\coth v+\omega\ \leq\ A(s)+\frac12\log\left(1+\frac1v\right)+\omega ;$$

3) if, in the setting of part 2), $\tanh v\geq\eta_0$, then also
$$u(t,s)\ \geq\ A(s)+\frac12\log\tanh v-\omega .$$
\end{lemma}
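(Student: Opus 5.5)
The plan is to compute $u(t,s)$ directly from the elevation identity \eqref{eqAid},
$$e^{2u(t,s)}=\tfrac14\left(|E(t,s)|^2+|\ti E(t,s)|^2+2\right),\qquad e^{2A(s)}=\tfrac14\left(\tfrac1w+\tfrac1{\ti w}+2\right),$$
and to insert the models of Section \ref{secRep} and Proposition \ref{propU3} at the point $x=s$. Since $e^{2A}\geq1$ and, by \eqref{eqxy2}, $e^{2A}\leq\eta_0^{-1}$, additive errors in $|E|^2,|\ti E|^2$ of size $\epsilon$ become additive errors of size $K\epsilon$ in $u-A$ after taking logarithms. The threshold $\d_1$ is chosen so that $\d\leq\d_0(C,\eta_0)$ of Lemma \ref{lemPair} holds. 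Together with $C\geq C_1(\eta_0)$, this choice also makes the first term of $\omega$ at most $\tfrac12$ and the second term $K_4e^{-2C}\eta_0^{-1}\leq\tfrac12$ (enlarging $K_{12}$ if necessary), so that $\omega\leq1$.

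\emph{Part 1.} $E(t,\cdot)$ has no zeros in $Q(s,2C/t)$. By the exchanged Lemma \ref{lemPair} with $c=C$ (if $\ti E$ had a zero in $Q(s,C/t)$, then $E$ would have one in $Q(s,2C/t)$), $\ti E$ has no zeros in $Q(s,C/t)$. Proposition \ref{propU3}, 2) then gives
$$|E(t,s)|=w^{-1/2}\pm\epsilon_{2C},\qquad |\ti E(t,s)|=\ti w^{-1/2}\pm\epsilon_C .$$
Hence $|E|^2+|\ti E|^2=w^{-1}+\ti w^{-1}\pm K\epsilon_C\eta_0^{-1/2}$. Since $\epsilon_C\eta_0^{-1/2}\leq K(\d+e^{-2C})\eta_0^{-3/2}$, the resulting bound $|u-A|\leq\omega$ holds after dividing by $e^{2A}\geq1$.

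\emph{Parts 2 and 3.} Let $\zeta$ be the shallowest zero, of depth $v\leq 2C$, so that $v\geq\uv$. Lemma \ref{lemRep} gives $E(t,s)=\a\g(v)w^{-1/2}\sin(\theta+iv)\pm\vartheta\g(v)$ with $\theta=t(s-\Re\zeta)$. Lemma \ref{lemPair} (with $c=2C$) gives the matched Dirichlet zero, whose depth satisfies \eqref{eqDepthMatch} and whose offset satisfies \eqref{eqDeltaPin}; substituting $\ti v$ by $v$ as in \eqref{eqEndpApp} yields $\ti E(t,s)=\ti\a\g(v)\ti w^{-1/2}\cos(\theta-\d_p+iv)\pm\vartheta'\g(v)$. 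Using $|\sin(\theta+iv)|^2=\sin^2\theta+\sinh^2v$, $|\cos(\cdot+iv)|^2=\cos^2(\cdot)+\sinh^2v$ and $\g^2(v)=1/(\sinh v\cosh v)$, we get
$$|E|^2+|\ti E|^2=\frac{\sin^2\theta/w+\cos^2(\theta-\d_p)/\ti w+\sinh^2v\,(1/w+1/\ti w)}{\sinh v\cosh v}+\text{error}.$$
The upper bound follows from $\sin^2\theta,\cos^2(\cdot)\leq1$, which gives
$$|E|^2+|\ti E|^2\leq\frac{(1+\sinh^2 v)(1/w+1/\ti w)}{\sinh v\cosh v}=\coth v\left(\tfrac1w+\tfrac1{\ti w}\right).$$
Since $\coth v\geq1$, adding $2\leq2\coth v$ gives $e^{2u}\leq\coth v\,e^{2A}$ up to the error, which is part 2). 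The second inequality of part 2 holds because $\coth v\leq1+1/v$.

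For part 3 I would drop the two trigonometric terms, which gives $|E|^2+|\ti E|^2\geq\tanh v\,(1/w+1/\ti w)$. It remains to cover the $+2$: by \eqref{eqxy2}, $1/w+1/\ti w\geq2\geq2\eta_0$, and $\tanh v\geq\eta_0$ is exactly what absorbs the constant term. Concretely, $\tanh v\,(1/w+1/\ti w)+2\geq\tanh v\,(1/w+1/\ti w+2)$, so $e^{2u}\geq\tanh v\,e^{2A}$. The hypothesis $\tanh v\geq\eta_0$ is also needed to make the relative error $\lesssim\omega$, since the main term has size $\geq\tanh v\geq\eta_0$.

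The main obstacle is error control uniformly in $v\in[\uv,2C]$. The absolute errors are of order $\vartheta'\g(v)\cdot\g(v)e^{2C}\eta_0^{-1/2}$, while the main term is of order $\g^2(v)\cosh^2 v/w$. So the relative error is of order $\vartheta' e^{2C}\eta_0^{-1/2}$ times a bounded factor, independent of $v$ because $\g(v)$ factors out. Tracking the constants, this gives $\vartheta\eta_0^{-3}$-type quantities, that is $\d^{1/2}e^{16C}\eta_0^{-6}$ after squaring roots, which is consistent with $\omega$. I would expect to have to check carefully that the substitution of $\ti v$ by $v$ contributes only $(\vartheta\eta_0^{-3})^{1/2}$. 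If that is too lossy, I would keep $\g(\ti v)$ and use $R=1\pm K\vartheta\eta_0^{-3}$ from \eqref{eqDepthMatch} multiplicatively instead.
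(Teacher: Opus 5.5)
Your overall route is the paper's: the elevation identity \eqref{eqAid} at $x=s$, the two resonance-free models in part 1, and in parts 2--3 the sine model for $E$ together with the model of $\ti E$ at the paired zero, bounded via $|\sin(\theta+iv)|\le\cosh v$ from above and $\ge\sinh v$ from below. However, the lemma asserts the specific error $\omega$ of \eqref{eqOmega}, and in two places your bookkeeping, as written, does not deliver it.

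In part 1, you bound $w^{-1/2}\le\eta_0^{-1/2}$ and then divide only by $e^{2A}\ge1$. This gives $|u-A|\le K(\d+e^{-2C})\eta_0^{-3/2}$, and the $\d$-independent part $e^{-2C}\eta_0^{-3/2}$ is not $\le K_4e^{-2C}\eta_0^{-1}$ for an absolute $K_4$. The two crude bounds cannot be sharp at the same time. The cross term in $e^{2u}-e^{2A}$ is $\pm\frac12\epsilon_C(w^{-1/2}+\ti w^{-1/2})$, while $e^{2A}\ge\frac18(w^{-1/2}+\ti w^{-1/2})^2$. Combined with $w^{-1/2}+\ti w^{-1/2}\ge2$ from \eqref{eqxy2}, the relative error is $O(\epsilon_C)$, and $|u-A|\le8\epsilon_C\le\omega$, as in the paper.

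In parts 2--3, your primary route via \eqref{eqEndpApp} does not give the stated $\omega$. The model errors enter the cross terms linearly, so replacing $\ti v$ by $v$ costs a relative error of order $\vartheta'\eta_0^{-1/2}$. That is $(\vartheta\eta_0^{-3})^{1/2}$ times factors $e^{KC}\eta_0^{-K}$, i.e.\ of order $\d^{1/4}$, which is not $O(\d^{1/2}e^{16C}\eta_0^{-6})$; nothing gets squared back. Your fallback is therefore necessary, not optional, and it is exactly the paper's argument:
\begin{itemize}
\item keep $\g(\ti v)$ in the Dirichlet model;
\item bound $|\cos(\phi+i\ti v)|$ by $\cosh\ti v$ from above and by $\sinh\ti v$ from below, which gives $\coth\ti v/\ti w$ and $\tanh\ti v/\ti w$;
\item pass to $v$ through \eqref{eqDepthMatch}, using that $\log\coth$ and $\log\tanh$ are $1$-Lipschitz in $\log\sinh[2\,\cdot\,]$. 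This yields $\coth\ti v\le(1+K\vartheta\eta_0^{-3})\coth v$ and $\tanh\ti v\ge(1-K\vartheta\eta_0^{-3})\tanh v$.
\end{itemize}
The phase offset $\d_p$ and \eqref{eqDeltaPin} play no role here.

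Two smaller corrections:
\begin{itemize}
\item Compare the cross terms with the main term directly (e.g.\ via $\g(v)\cosh v=\sqrt{\coth v}$) rather than inserting $\cosh v\le e^{2C}$. The latter would push the $\d$-part of the error beyond $e^{16C}$.
\item The relative error is uniform in $v$ only for the upper bound. For the lower bound it is of size $\vartheta\sqrt w/\sinh v$, and this is what the hypothesis $\tanh v\ge\eta_0$ controls. The paper evaluates $(\max(0,p-q))^2\ge p^2-2pq$ at $p=\g(v)\sinh v/\sqrt w$ and uses $\g^2(v)\sinh v\le1$. Absorbing the constant $2$, by contrast, uses only $\tanh v\le1$, not $\tanh v\ge\eta_0$.
\end{itemize}
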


\begin{proof}
1) By the exchanged version of Lemma \ref{lemPair} with $c=C$, $\ti E(t,\cdot)$ has no zeros in $Q(s,C/t)$: a zero of $\ti E$ there would produce a zero of $E$ in $Q(s,2C/t)$. Hence the exponential models of Proposition \ref{propU3}, 2) apply to both families, at $c=2C$ and $c=C$ respectively, and at the point $s$, with $\epsilon_C$ of \eqref{eqEpsU3},
$$\left|E(t,s)-\frac{\a}{\sqrt{w}}e^{-its}\right|\leq\epsilon_C,\qquad
\left|\ti E(t,s)-\frac{\ti\a}{\sqrt{\ti w}}e^{-its}\right|\leq\epsilon_C .$$
Writing $x=w^{-1/2}$, $y=\ti w^{-1/2}$ and using \eqref{eqAid},
$$e^{2A(s)}-\frac{\epsilon_C(x+y)}2\ \leq\ |a(t,s)|^2\ \leq\ e^{2A(s)}+\frac{\epsilon_C(x+y)}2+\frac{\epsilon_C^2}2 .$$
Since $e^{2A}=\frac14(x^2+y^2+2)\geq\frac18(x+y)^2$ and $x+y\geq2$ by \eqref{eqxy2}, the relative deviation is at most $8\epsilon_C$ (we may assume $\epsilon_C\leq1$: otherwise the claim is weaker than $\omega\leq1$), so $|u(t,s)-A(s)|\leq8\epsilon_C=8K_{11}\left(\d+e^{-2(C-1)}\right)\eta_0^{-1}\leq\omega$ for $K_4\geq8e^2K_{11}$; the threshold $\d_1$, together with $C\geq C_1(\eta_0)$ after enlarging $K_{12}$ if needed, guarantees $\omega\leq1$.

2) Let $\zeta$ be a zero of the smallest depth $v$. By Lemma \ref{lemRep}, using $|\sin[t(s-\zeta)]|\leq\cosh v$ and $\g(v)\cosh v=\sqrt{\coth v}$, and $\g(v)\leq\sqrt{\coth v}$,
$$|E(t,s)|\ \leq\ \frac{\sqrt{\coth v}}{\sqrt{w}}\left(1+\vartheta\sqrt w\right).$$
By Lemma \ref{lemPair} with $c=2C$, $\ti E$ has a matched zero of depth $\ti v$ with $\sinh[2\ti v]=(1\pm K_6\vartheta\eta_0^{-3})\sinh[2v]$; since the relative sensitivity of $\coth^2$ with respect to $\sinh[2\cdot]$ is at most $2$, $\coth\ti v\leq(1+K\vartheta\eta_0^{-3})\coth v$, and the Dirichlet part of Lemma \ref{lemRep} gives
$$|\ti E(t,s)|\ \leq\ \frac{\sqrt{\coth v}}{\sqrt{\ti w}}\left(1+K\vartheta\eta_0^{-1}\right).$$
Inserting the two bounds into \eqref{eqAid}, using $\coth v\geq1$ and absorbing the cross terms by \eqref{eqxy2} as in part 1),
$$|a(t,s)|^2\ \leq\ \coth v\ e^{2A(s)}\left(1+K\vartheta\eta_0^{-3}\right),$$
and $\coth v\leq1+1/v$ gives the statement, with $\omega$ absorbing $K\vartheta\eta_0^{-3}\leq K K_5\d^{1/2} e^{16C}\eta_0^{-9/2}$.

3) We use the bound $|\sin(\theta+iv)|\geq\sinh v$ at real $\theta$, $\g(v)\sinh v=\sqrt{\tanh v}$, $\g^2(v)\sinh v=1/\cosh v\leq1$, and the elementary inequality $(\max(0,p-q))^2\geq p^2-2pq$: by Lemma \ref{lemRep},
$$|E(t,s)|^2\ \geq\ \frac{\tanh v}{w}-2\vartheta\,\frac{1}{\sqrt w}\ \geq\ \frac{\tanh v}{w}-2\vartheta\,\eta_0^{-1/2},$$
and similarly for $\ti E$ with $\tanh\ti v\geq(1-K\vartheta\eta_0^{-3})\tanh v$. Hence, by \eqref{eqAid} and $\tanh v\leq1$,
$$|a(t,s)|^2\ \geq\ \tanh v\cdot e^{2A(s)}-K\vartheta\,\eta_0^{-3}.$$
Since $e^{2A}\geq1$ and $\tanh v\geq\eta_0$, the subtracted term is at most $K\vartheta\eta_0^{-4}\tanh v\,e^{2A}$, and
$$u(t,s)\ \geq\ A(s)+\frac12\log\tanh v+\frac12\log\left(1-K\vartheta\eta_0^{-4}\right)\ \geq\ A(s)+\frac12\log\tanh v-\omega$$
after enlarging $K_4$.
\end{proof}

{\bf The grid.} For $\rho>0$ and $T\geq1$ put $t_0=T/32$ and
\begin{equation}
t_{j+1}=\min\left(2t_j,\ \inf\left\{t>t_j:\ \int_{t_j}^t|f|=\rho\right\}\right).
\label{eqGrid}
\end{equation}
Then $t_j\uparrow\infty$, each step carries $|f|$-mass at most $\rho$, and if two consecutive grid points lie in $[\tau/2,\tau)$, the step between them is not a doubling step and therefore carries mass exactly $\rho$; hence the octave counting function satisfies
\begin{equation}
N(\tau):=\#\{j:\ t_j\in[\tau/2,\tau)\}\ \leq\ \frac1\rho\int_{\tau/2}^\tau|f|+1\ \leq\ \frac{2M'}\rho+1,\qquad\tau>0,
\label{eqNgrid}
\end{equation}
with $M'$ from Corollary \ref{corLac}. Consequently,
\begin{equation}
\sum_j\|f\|^2_{L^2((t_j,2t_j))}=\int_{T/32}^\infty f^2(\tau)\,N(\tau)\,d\tau\ \leq\ \left(\frac{2M'}\rho+1\right)\|f\|^2_{L^2((T/32,\infty))}.
\label{eqGridMass}
\end{equation}

{\bf The shallow layer.} It remains to control the shallow pairs excluded from the hypotheses of Sections \ref{secRep}--\ref{secTE}. The mechanism, suggested by the dwell-time phenomenon of \cite{Scatter}, is that a resonance cannot become shallow quickly: descending into the layer requires $|f|$-mass, so the descent must pass, slowly, through the top of the layer --- where the representations are valid --- and its first entry there is controlled by Proposition \ref{propFirstEntry}. We first estimate the $|f|$-mass required for a descent.

\begin{lemma}[descent estimate]\label{lemDescent}
There is an absolute constant $K_6'$ with the following property. Let $s\in\RR(T,\eta_0,C,\d)$, let $T\leq t_1<t_2$, and let $\zeta(\tau)$ be a continuous family of zeros of $E(\tau,\cdot)$ lying in $Q(s,16C/\tau)$ for $\tau\in[t_1,t_2]$, with depths $v(\tau)\leq1$. Then
\begin{equation}
\sinh v(t_2)\ \geq\ \tfrac16\,e^{-2\int_{t_1}^{t_2}|f|}\,\sinh v(t_1)\ -\ 2\,\d\,\eta_0^{-1} .
\label{eqDescent}
\end{equation}
The same holds for the Dirichlet family.
\end{lemma}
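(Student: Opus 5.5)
The plan is to convert the depth of a zero into a quantity that moves in time only through the potential, namely the size of $E$ on the real line near the zero, and to use the value rigidity of Remark \ref{remFloor} to control how that size changes from $t_1$ to $t_2$. The bridge between depth and real-line values is the additive normalization of Lemma \ref{lemBeta} together with the slot identity \eqref{eqMappE}. Both hold at every depth, with no lower bound on $v$, so the shallow layer is not excluded here. This explains the additive error $2\d\eta_0^{-1}$ in \eqref{eqDescent}.

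\emph{Step 1 (depth as a real-line quantity).} Fix $\tau\in[t_1,t_2]$ and write $\zeta(\tau)=x_\tau-iy_\tau$. Apply \eqref{eqMappE} at the real point $z=x_\tau$ and use $|\sin[\tau(x_\tau-\zeta)]|=|\sin(iv)|=\sinh v$. This gives
$$|\b|\,|E(\tau,x_\tau)|=\frac{\sinh v}{w}\pm\d .$$
Lemma \ref{lemBeta} gives $2|\b|^2w=\sinh[2v]\pm\d w$. For $v\leq1$ we have $\sinh[2v]\asymp\sinh v$, so the two relations can be combined into
$$|E(\tau,x_\tau)|^2\asymp\frac{\sinh v}{w}$$
up to absolute factors and additive errors of order $\d\eta_0^{-1}$. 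Because $\sinh v$ can be as small as $\d$, the combination has to be done carefully. I would keep the estimate in the one-sided form needed:
$$\sinh v(t_2)\ \geq\ c\,w\,|E(t_2,x)|^2-2\d\eta_0^{-1}\quad\text{at } x=x_{t_2},$$
together with the reverse inequality $w|E(t_1,x)|^2\leq K\sinh v(t_1)+\dots$ at $x=x_{t_1}$.

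\emph{Step 2 (transport in time).} Value rigidity gives
$$|E(t_2,x)|\geq e^{-\int_{t_1}^{t_2}|f|}\,|E(t_1,x)|$$
at each fixed real $x$, which accounts for the factor $e^{-2\int|f|}$. The remaining issue is that the real parts $x_{t_1}$ and $x_{t_2}$ differ. I would handle this by evaluating at a point $x$ lying within $O(1/\tau)$ of both, and using the sine model \eqref{eqMappE} at each of the two times. The model shows that $|E(\tau,x)|^2\,|\b|^2$ equals $|\sin[\tau(x-\zeta)]|^2/w^2$ up to $\d$, and this is at least $\sinh^2v/w^2$ for every real $x$. So $\min_x|E(\tau,\cdot)|$ over a period window is essentially attained at $x_\tau$ and is comparable to the depth quantity.

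\emph{Step 3 (choice of the window and the constants).} The cleanest route is probably to work with $m(\tau)=\min\{|E(\tau,x)|:x\in I\}$ over a fixed real interval $I$ containing one period around $s$. Value rigidity passes to the minimum, giving $m(t_2)\geq e^{-\int|f|}m(t_1)$, and Step 1 turns $m(\tau)$ into $\sinh v(\tau)$ at both ends. The factor $\tfrac16$ would then come from the constants $\sinh2v/\sinh v\in[2,2\cosh1]$ and the $|\b|$ normalizations. The Dirichlet family is handled identically, using the second half of (U2).

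The main obstacle is the window. The box $Q(s,16C/\tau)$ shrinks as $\tau$ grows from $t_1$ to $t_2$, and the lattice of the model rescales with it. So I would have to justify that a common real interval around $\Re\zeta(\tau)$ on which the sine model is valid exists at both times, and that the minimum of $|E|$ there is realized near the tracked zero and not near a neighbouring lattice zero. I would first try to make the argument pointwise at $x=x_{t_2}$, using only the lower bound $|\sin|\geq\sinh v$ at time $t_1$. That direction requires no minimization, since it only uses
$$|E(t_1,x)|\geq\frac{\sinh v(t_1)}{w|\b_1|}-\frac{\d}{|\b_1|}$$
for every real $x$ in the box.
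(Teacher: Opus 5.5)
There is a genuine gap at the far endpoint $t_2$. To turn $|E(t_2,x_{t_2})|$ (or $m(t_2)$) into $\sinh v(t_2)$ you must divide the pattern \eqref{eqMappE}, $|\beta_2|\,|E(t_2,x_{t_2})|\le \sinh v(t_2)/w+\delta$, by $|\beta_2|$. So you need a \emph{lower} bound $|\beta_2|^2\gtrsim(\sinh v(t_2)+\delta w)/w$.

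Lemma~\ref{lemBeta} only gives $2|\beta_2|^2w\ge\sinh[2v(t_2)]-\delta w$, which is empty as soon as $\sinh[2v(t_2)]\le\delta w$. In that regime (U2) controls only the product $|\beta_2|\,|E(t_2,\cdot)|$. Nothing at time $t_2$ alone prevents $|\beta_2|$ from being tiny and $|E(t_2,\cdot)|$ correspondingly large on the window.

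Hence your one-sided inequality $\sinh v(t_2)\ge c\,w|E(t_2,x_{t_2})|^2-2\delta\eta_0^{-1}$ is not available. Your chain proves only the dichotomy ``either $\sinh[2v(t_2)]\lesssim\delta w$, or \eqref{eqDescent} holds''. The first alternative --- a zero with $v(t_1)\sim1$ falling to the bottom of the layer while spending little $|f|$-mass --- is exactly what the lemma has to exclude.

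Transporting the minimum $m(\tau)$ does not help. It bounds $|E(t_2,\cdot)|$ from below, whereas what is missing is an upper bound on $|E(t_2,\cdot)|$, i.e.\ a lower bound on $|\beta_2|$. Separately, at $t_1$ you need the lower bound $w|E(t_1,x)|^2\gtrsim\sinh v(t_1)-\dots$, not the upper bound you wrote.

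The paper closes this gap by comparing $|\beta_2|$ with $|\beta_1|$ rather than with $\sinh[2v(t_2)]$, using the \emph{upper} half of value rigidity on the crests of the pattern. Put $W=\R\cap Q(s,16C/t_1)$.
\begin{itemize}
\item At a point $p^*\in\R\cap Q(s,16C/t_2)$ with $|\sin[t_2(p^*-\zeta(t_2))]|\ge1$, the pattern gives $|\beta_2|\sup_W|E(t_2,\cdot)|\ge1/(2w)$.
\item At time $t_1$ the pattern gives $|\beta_1|\sup_W|E(t_1,\cdot)|\le3/w$.
\item Value rigidity gives $\sup_W|E(t_2,\cdot)|\le e^{\int_{t_1}^{t_2}|f|}\sup_W|E(t_1,\cdot)|$.
\end{itemize}
Together these give $|\beta_2|\ge\frac16e^{-\int_{t_1}^{t_2}|f|}|\beta_1|$, with no depth information needed at $t_2$. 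Then, at the single point $x_0=\Re\zeta(t_2)$, and using the lower bound $|\sin[t_1(x_0-\zeta(t_1))]|\ge\sinh v(t_1)$ from your last paragraph,
\[
\frac{\sinh v(t_2)+\delta w}{w}\ \ge\ |\beta_2|\,|E(t_2,x_0)|\ \ge\ |\beta_2|\,e^{-\int_{t_1}^{t_2}|f|}|E(t_1,x_0)|\ \ge\ \tfrac16\,e^{-2\int_{t_1}^{t_2}|f|}\,\frac{\sinh v(t_1)-\delta w}{w}.
\]
Lemma~\ref{lemBeta} is not needed at all, and the window issue disappears. The factor $e^{-2\int|f|}$ comes from two uses of rigidity, not from squaring.

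Alternatively, your own route can be rescued by the hypothesis you do not use: the continuity of $\zeta(\tau)$. Work in the nontrivial case $\sinh v(t_1)>12e^{2\int_{t_1}^{t_2}|f|}\delta\eta_0^{-1}$.
\begin{itemize}
\item If $\sinh[2v(t_2)]\ge3\delta w$, Lemma~\ref{lemBeta} gives the needed lower bound on $|\beta_2|$, and your chain works directly.
\item If $\sinh[2v(t_2)]<3\delta w$, stop at the first $\tau^*$ with $\sinh[2v(\tau^*)]=3\delta w$. There Lemma~\ref{lemBeta} still gives $2|\beta(\tau^*)|^2w\ge2\delta w$. Running your pointwise chain between $t_1$ and $\tau^*$ at $x=\Re\zeta(\tau^*)$ then forces $\frac16e^{-2\int_{t_1}^{t_2}|f|}\sinh v(t_1)<2\delta\eta_0^{-1}$, contradicting nontriviality.
\end{itemize}
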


\begin{proof}
Write $x_0=\Re\zeta(t_2)$, $W=\R\cap Q(s,16C/t_1)$, $\int=\int_{t_1}^{t_2}|f|$, and let $\b_i=\b(t_i)$ be the normalizations of Lemma \ref{lemBeta} at $\zeta(t_i)$. By \eqref{eqMappE}, which holds at every depth, for $p\in W$ and $i=1,2$,
\begin{equation}
\Big|\,|\b_i|\,|E(t_i,p)|-\frac{\left|\sin[t_i(p-\zeta(t_i))]\right|}{w}\,\Big|\ \leq\ \d .
\label{eqPattern}
\end{equation}
Four applications of \eqref{eqPattern} and of the value rigidity $|\Delta\log|E(\tau,x)||\leq\int$ (Remark \ref{remFloor}) give the claim. First, at $p=x_0$ and $i=2$, since $|\sin[t_2(x_0-\zeta(t_2))]|=\sinh v(t_2)$,
$$|\b_2|\,|E(t_2,x_0)|\ \leq\ \frac{\sinh v(t_2)+\d w}{w}.$$
Second, at $p=x_0$ and $i=1$, since $|\sin[t_1(x_0-\zeta(t_1))]|\geq\sinh v(t_1)$,
$$|\b_1|\,|E(t_1,x_0)|\ \geq\ \frac{\sinh v(t_1)-\d w}{w}.$$
Third, taking $p=p^*$, the point of $W$ where $|\sin[t_2(p-\zeta(t_2))]|$ is maximal (that maximum lies in $[1,\cosh1]$ since $v(t_2)\leq1$ and $W$ is longer than the period), $|\b_2|\sup_W|E(t_2,\cdot)|\geq|\b_2||E(t_2,p^*)|\geq(1-\d w)/w\geq1/(2w)$; and similarly, at $i=1$, $|\b_1|\sup_W|E(t_1,\cdot)|\leq(\cosh 1+\d w)/w\leq3/w$. Since the value rigidity gives $\sup_W|E(t_2,\cdot)|\leq e^{\int}\sup_W|E(t_1,\cdot)|$ pointwise on $W$, the last two bounds yield $|\b_2|/|\b_1|\geq\tfrac16e^{-\int}$. Finally, the rigidity at the fixed point $x_0$ gives $|E(t_2,x_0)|\geq e^{-\int}|E(t_1,x_0)|$, and chaining the four displays,
$$\frac{\sinh v(t_2)+\d w}{w}\ \geq\ |\b_2|\,e^{-\int}\,|E(t_1,x_0)|\ \geq\ \tfrac16\,e^{-2\int}\,\frac{\sinh v(t_1)-\d w}{w},$$
which is \eqref{eqDescent} after $w\leq\eta_0^{-1}$.
\end{proof}

Recall from Remark \ref{remFloor} that, under \eqref{eqDMR}, every zero of $E(t,\cdot)$ or $\ti E(t,\cdot)$, at every real location, has depth at least $c\,e^{-2(M'+M)}t^{-2M/\ln2}$. Define
\begin{equation}
T_\d\ :=\ \left(\frac{c\,e^{-2(M'+M)}}{\uv}\right)^{\ln2/(2M)}\qquad(M>0;\ T_\d:=\infty\text{ if }M=0),
\label{eqTdelta}
\end{equation}
so that no $\d$-shallow pairs exist at times $t\leq T_\d$, and $T_\d\to\infty$ as $\d\to0$ for fixed $M,M'$.

\begin{lemma}[first entry into the layer]\label{lemShallow}
There exist absolute constants $K$ and $K_6''$ with the following property. Fix $\eta_0\in(0,\tfrac12]$, $C\geq C_1(\eta_0)$ and $M'\geq0$; then there is a threshold $\d_5(C,\eta_0,M')>0$ such that the following holds for all $\d\leq\d_5$.

Let $T\geq1$, let the grid \eqref{eqGrid} run with a step $\rho\in(0,1]$, and put $T_*:=\max(T,T_\d)$, with $T_\d$ defined in \eqref{eqTdelta}. Then
\begin{equation}
\begin{gathered}
\left|\left\{s\in\RR(T,\eta_0,C,\d):\ (s,t)\text{ is $\d$-shallow for some }t\geq T_*\right\}\right|\ \leq\\
\leq\ K\left(\frac{M'}\rho+1\right)\frac{e^{8C}\,\eta_0^{-2}}{\log(1/\d)}\ \|f\|^2_{L^2((T^\sharp,\,\infty))},\qquad T^\sharp:=\max(T,T_\d)/32 .
\end{gathered}
\label{eqShallowSet}
\end{equation}
Moreover, if in the left-hand side the shallow zero is additionally required to have depth at most $v\leq\uv$, the factor $1/\log(1/\d)$ improves to $1/\Phi(v^\sharp,C)$ with $v^\sharp:=v\,e^{K_6''(M'+C+1)}$, provided $v^\sharp\leq C$.
\end{lemma}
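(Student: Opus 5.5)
Fix $s$ in the set on the left of \eqref{eqShallowSet} and let $t_{\rm sh}\geq T_*$ be the infimum of the shallow times for $s$. Since $T_*\geq T_\d$, no pair $(s,\tau)$ with $\tau\leq T_\d$ is shallow, so there is a genuine first entry. The argument follows the shallow zero backward in time until it sits at the top of the layer, at a moderate depth where the representations of Section \ref{secRep} hold. At that time the zero is charged, via Proposition \ref{propFirstEntry}, to the $L^2$ mass of $f$ on one forward octave. The grid \eqref{eqGrid} fixes the times at which the charging is done, and \eqref{eqGridMass} sums the octave masses.

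\textbf{Step 1: the backward path.} Let $\zeta(\tau)$ be the continuous family of zeros of $E(\tau,\cdot)$ (or $\ti E$) that has depth $<\uv$ in $Q(s,16C/\tau)$ at time $t_{\rm sh}$. Take $t_j$ to be the grid point with $t_j\leq t_{\rm sh}<t_{j+1}$. The step $[t_j,t_{\rm sh}]$ carries $|f|$-mass at most $\rho\leq1$. By Lemma \ref{lemDescent}, the depth at $t_j$ then satisfies
$$\sinh v(t_j)\ \leq\ 6e^{2}\big(\sinh v(t_{\rm sh})+2\d\eta_0^{-1}\big)\ \leq\ K\d^{1/2}+K\d\eta_0^{-1}.$$
So at the grid time $t_j$ the zero has depth $\lesssim\d^{1/2}$, and it lies in a box of size $O(C/t_j)$ around $s$, since $t_{\rm sh}\leq 2t_j$.

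There is one difficulty here: the pair $(s,t_j)$ may itself be shallow. This is excluded by minimality of $t_{\rm sh}$ if $t_j<t_{\rm sh}$. If the pair is shallow anyway, we move to an earlier grid point. The mass per step is $\leq\rho$ and the depth can shrink by at most the factor $6e^{2\rho}$ per step, so going back a bounded number of steps lands us above $\uv$. The same bound shows that the depth is at most $v^\sharp=v\,e^{K(M'+C+1)}$ when a shallow depth $\leq v$ is prescribed; here $M'$ bounds the mass over the octave, using $N(\tau)\leq 2M'/\rho+1$. We also need $(s,2t_j)$ to be non-shallow. If it is shallow, we instead use the last grid time $t_{j'}$ before $t_{\rm sh}/2$, so that $2t_{j'}<t_{\rm sh}$; its depth is still controlled by iterating Lemma \ref{lemDescent} over boundedly many steps of mass $\rho$.

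\textbf{Step 2: charging the zero.} At the chosen grid time $t_*$, the pairs $(s,t_*)$ and $(s,2t_*)$ are both non-shallow. After recentering (the box $Q(s,16C/t)$ versus $Q(s,2C/t)$ is handled by covering with translates, at a cost of a factor $K$), $E(t_*,\cdot)$ has a zero of depth $\leq v'$. Here $v'=K\d^{1/2}e^{K\rho}$ in the general case, and $v'=v^\sharp$ in the refined case. Proposition \ref{propFirstEntry} bounds the measure of such $s$ by
$$2^{17}\pi e^{8C}\eta_0^{-2}\,\|f\|^2_{L^2((t_*,2t_*))}\big/\Phi(v',C).$$
Since $\Phi(v',C)\geq \tfrac12\log(1+2^{-13}e^{-8C}/v')\gtrsim\log(1/\d)$ once $\d\leq\d_5(C,\eta_0,M')$, this gives the factor $1/\log(1/\d)$ in the general case and $1/\Phi(v^\sharp,C)$ in the refined case. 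Summing over grid points $t_*\geq T_*/32$ with \eqref{eqGridMass} gives the factor $(2M'/\rho+1)\|f\|^2_{L^2((T^\sharp,\infty))}$.

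\textbf{Main obstacle.} The hardest point is Step 1's guarantee that the first-entry time is witnessed by non-shallow pairs at both $t_*$ and $2t_*$, while the zero still has small depth. The doubling partner $2t_*$ may fall after $t_{\rm sh}$. I would handle this by charging instead at $t_*/2$-type times, or by applying Proposition \ref{propFirstEntry} on a half-octave version. The fallback is to note that Lemma \ref{lemSweep} only needs the far endpoint to be non-shallow. The far endpoint can be taken as the grid point just before $t_{\rm sh}$, with $t_*$ one octave earlier. The depth at $t_*$ is then controlled by Lemma \ref{lemDescent} over mass $\leq 2M'$, which is where $e^{K(M'+C)}$ enters $v^\sharp$.
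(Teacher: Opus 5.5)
Your skeleton matches the paper: take the first shallow time, go back to a grid time $t_*$ with $2t_*<t_{\rm sh}$, charge there via Proposition \ref{propFirstEntry}, and sum with \eqref{eqGridMass}. But the step that carries the weight is missing. You never show that $E(t_*,\cdot)$ actually has a zero in $Q(s,2C/t_*)$ of small depth.

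\textbf{The gap: existence and location of the earlier zero.} Lemma \ref{lemDescent} cannot supply this. Its hypothesis is a family of zeros already lying in $Q(s,16C/\tau)$ with depth $\le1$ at the earlier times, and it only bounds depths. Your ``continuous family'' $\zeta(\tau)$ is asserted, not constructed. Nothing in the paper controls the horizontal motion of zeros, so the backward path may leave the box. Your location claim (``it lies in a box of size $O(C/t_j)$ since $t_{\rm sh}\le 2t_j$'') concerns the zero at $t_{\rm sh}$, not the tracked zero at $t_j$.

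The paper closes this with a single-time argument that needs no tracking. Put $x_0=\Re\zeta^0$, the real part of the entering zero at $t^\flat=t_{\rm sh}$.
\begin{itemize}
\item By \eqref{eqPattern} at $t^\flat$ and \eqref{eqBetaAdd}, $|E(t^\flat,x_0)|\le K\sqrt{(\uv+\d\eta_0^{-1})\eta_0^{-1}}$.
\item Value rigidity transfers this back: $|E(t_1,x_0)|\le e^{K(M'+1)}|E(t^\flat,x_0)|$.
\item If $E(t_1,\cdot)$ had no zeros in $Q(s,8C/t_1)$, Proposition \ref{propU3}, 2) with $c=8C$ would give $|E(t_1,x_0)|\ge\frac12\sqrt{\eta_0}$, a contradiction.
\item Then \eqref{eqPattern} at $t_1$, evaluated at the same point $x_0$ where $|E(t_1,x_0)|$ is forced small, bounds both the depth $v'$ and the offset $t_1|x_0-\Re\zeta'|$ of the nearest zero $\zeta'$ by $v^\sharp$.
\end{itemize}
The paper also takes $t_1\le t^\flat/16$, not roughly $t_{\rm sh}/2$ or $t_{\rm sh}/4$. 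This gives $2t_1<t^\flat$, which disposes of the obstacle you flagged. It also gives $t_1|x_0-s|\le C$, so $\zeta'\in Q(s,2C/t_1)$ automatically and no recentering is needed.

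Your ``covering with translates'' does not replace this. Proposition \ref{propFirstEntry} applies only at points of $\RR(T,\eta_0,C,\d)$, and the translated centres need not be regular. A Rouch\'e lattice recentering via \eqref{eqMappE}, as in Proposition \ref{propU3}, 1), would work, but only once a zero is known to exist in the box. A continuity bootstrap combining tracking, Lemma \ref{lemDescent} and lattice recentering might rescue your route, but none of that is in the proposal.

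\textbf{Two further points.}
\begin{itemize}
\item Iterating Lemma \ref{lemDescent} grid step by grid step, as in your Step 1, loses the absolute factor $6$ at every step. That is $6^{N}$ with $N\le 2M'/\rho+1$, which is not of the form $e^{K(M'+C+1)}$ uniformly in $\rho\in(0,1]$. The estimate must be applied once over the whole backward interval, as your fallback does. In the paper, value rigidity is applied once over the at most five octaves between $t_1$ and $t^\flat$, which is where $e^{K(M'+1)}$ and then $v^\sharp$ come from.
\item If the entering shallow zero belongs to $\ti E$, Proposition \ref{propFirstEntry}, which concerns zeros of $E$, does not apply directly. The paper converts the zero of $\ti E(t_1,\cdot)$ into a matched zero of $E(t_1,\cdot)$ of comparable depth by the exchanged version of Lemma \ref{lemPair}. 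This is legitimate because $(s,t_1)$ is not shallow.
\end{itemize}
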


\begin{proof}
Let $s$ belong to the set in \eqref{eqShallowSet} and let $t^\flat=t^\flat(s)\geq T_*$ be the first time at which $(s,t)$ is $\d$-shallow; by the choice of $T_\d$ and continuity, $t^\flat$ is well defined, and at $t^\flat$ some zero $\zeta^0$ of $E(t^\flat,\cdot)$ or of $\ti E(t^\flat,\cdot)$ in $Q(s,16C/t^\flat)$ has depth exactly $\uv$, while for $t<t^\flat$ no zero of either family in $Q(s,16C/t)$ has depth below $\uv$. We treat the case of a zero of $E$; the Dirichlet case is identical --- the argument below uses only the pattern \eqref{eqPattern} and the value rigidity, both of which hold for the Dirichlet family, and its conclusion, a zero of $\ti E(t_1,\cdot)$ in $Q(s,2C/t_1)$ of depth between $\uv$ and $v^\sharp$, is converted into a zero of $E(t_1,\cdot)$ in $Q(s,4C/t_1)$ of comparable depth by the exchanged version of Lemma \ref{lemPair} (the pair $(s,t_1)$ is not $\d$-shallow since $t_1<t^\flat$, and the depths match by \eqref{eqDepthMatch}); Proposition \ref{propFirstEntry} is then applied with $2v^\sharp$ in place of $v^\sharp$.

Let $t_1$ be the largest grid point with $t_1\leq t^\flat/16$; it exists, since $t_0=T/32\leq T_*/16\leq t^\flat/16$. We claim that $E(t_1,\cdot)$ has a zero in $Q(s,2C/t_1)$ of depth between $\uv$ and $v^\sharp:=\uv e^{K_6''(M'+C+1)}$. Indeed, write $x_0=\Re\zeta^0\in Q(s,16C/t^\flat)$; then $t_1|x_0-s|\leq(t_1/t^\flat)\cdot16C\leq C$. The interval $(t_1,t^\flat)$ spans at most five octaves, at most one of which may intersect $(0,1)$, so the mass $\int_{t_1}^{t^\flat}|f|$ is at most $5M+M'\leq K(M'+1)$. If $E(t_1,\cdot)$ had no zeros in $Q(s,8C/t_1)$, Proposition \ref{propU3}, 2) with $c=8C$ would give $|E(t_1,x_0)|\geq1/\sqrt w-\epsilon_{8C}\geq\tfrac12\sqrt{\eta_0}$ (the term $e^{-16C+2}\eta_0^{-1}$ of $\epsilon_{8C}$ is at most $\tfrac14\sqrt{\eta_0}$ by $C\geq C_1(\eta_0)$, and the term $K_{11}\d\eta_0^{-1}$ by $\d\leq\d_5$), while the pattern \eqref{eqPattern} at $t^\flat$ (at $p=x_0$, where $|\sin[t^\flat(x_0-\zeta^0)]|=\sinh\uv$) together with \eqref{eqBetaAdd} gives $|E(t^\flat,x_0)|\leq K\sqrt{(\uv+\d\eta_0^{-1})\eta_0^{-1}}$, whence, by the value rigidity, $|E(t_1,x_0)|\leq e^{K(M'+1)}|E(t^\flat,x_0)|$ --- a contradiction for $\d\leq\d_5$. Hence $E(t_1,\cdot)$ has zeros in $Q(s,8C/t_1)$. Let $\zeta'$ be one nearest to $x_0$, $v'$ its depth and $\b'$ its normalization \eqref{eqBetaAdd}. By \eqref{eqPattern} at $t_1$,
$$\begin{gathered}\left|\sin[t_1(x_0-\zeta')]\right|\ \leq\ w\,|\b'|\,|E(t_1,x_0)|+\d w\ \leq\\ \leq\ K\sqrt{\sinh[2v']+\d\eta_0^{-1}}\cdot e^{K(M'+1)}\sqrt{(\uv+\d\eta_0^{-1})\,\eta_0^{-1}}+\d\eta_0^{-1},\end{gathered}$$
using $|\b'|\leq K\sqrt{(\sinh[2v']+\d\eta_0^{-1})/w}$ from \eqref{eqBetaAdd} and $w\leq\eta_0^{-1}$. Since $|\sin(\theta+iv)|^2=\sin^2\theta+\sinh^2v$ for real $\theta,v$, the left-hand side dominates both $\sinh v'$ and $|\sin[t_1(x_0-\Re\zeta')]|$, and the display first gives $\sinh v'\leq K e^{K(M'+1)}\eta_0^{-2}\uv$ (absorbing the $\d$-terms for $\d\leq\d_5$), and then, returning to it with the depth bound in hand, $|\sin[t_1(x_0-\Re\zeta')]|\leq Ke^{K(M'+1)}\eta_0^{-2}\uv$ as well, so that $x_0$ lies within $Ke^{K(M'+1)}\eta_0^{-2}\uv/t_1$ of the real part of a zero of the pattern, and hence of a zero of $E(t_1,\cdot)$. Since $C\geq C_1(\eta_0)$, the powers of $\eta_0^{-1}$ are absorbed into $e^{KC}$, and both the distance $t_1|x_0-\Re\zeta'|$ and the depth $v'$ are at most $\uv e^{K_6''(M'+C+1)}=v^\sharp$; in particular $\zeta'\in Q(s,2C/t_1)$, and, since $t_1<t^\flat$, its depth is at least $\uv$.

Thus every $s$ of the set in \eqref{eqShallowSet} exhibits, at some grid point $t_1\geq T^\sharp$ ($t_1\geq t^\flat/32\geq T_*/32=T^\sharp$), the event of Proposition \ref{propFirstEntry} with $v=v^\sharp$ (or $v=2v^\sharp$ in the Dirichlet case); the pairs $(s,t_1)$ and $(s,2t_1)$ are not $\d$-shallow since $2t_1\leq t^\flat/8<t^\flat$. Summing \eqref{eqFirstEntry} over the grid points $t_j\geq T^\sharp$ and using \eqref{eqGridMass},
$$\left|\left\{\cdots\right\}\right|\ \leq\ \frac{2^{17}\pi e^{8C}}{\eta_0^{2}\,\Phi(v^\sharp,C)}\left(\frac{2M'}\rho+1\right)\|f\|^2_{L^2((T^\sharp,\infty))} .$$
Finally,
$$\Phi(v^\sharp,C)=\frac12\log\left(1+\frac{2^{-13}e^{-8C}}{v^\sharp}\right)\ \geq\ c\left(\log\frac1\d-K(M'+C+1)\right)\ \geq\ c'\log\frac1\d$$
for $\d\leq\d_5(C,\eta_0,M')$, which gives \eqref{eqShallowSet}; the graded version is the same estimate with the first-entry threshold $\uv$ replaced by $v$.
\end{proof}

{\bf The main estimates.} Let us collect the objects entering the statements below. The regular set $\RR(T,\eta_0,C,\d)$, defined in Section \ref{secU} through the maximal conditions (M1)--(M3), consists of points $s\in\R$ with $\min\left(w(s),\ti w(s)\right)\geq\eta_0$, and on it condition (U2) holds with accuracy $\d$ for all $t\geq T$ \eqref{eqRtoU}, while the resonance-free exponential models hold with the accuracies $\epsilon_c$ of \eqref{eqEpsU3}, by Proposition \ref{propU3}; for fixed $\eta_0,C,\d$ these sets increase with $T$ and exhaust almost all of $\{\min(w,\ti w)\geq\eta_0\}$. The layer depth $\uv\approx\tfrac12\d^{1/2}$ is defined in \eqref{eqLayer}, and $T_\d$, the time before which no resonance can be $\d$-shallow, in \eqref{eqTdelta}. Further, $C_0\geq4\pi$ and $K_{12}$ are the absolute constants fixed in Lemmas \ref{lemPair} and \ref{lemSweep}, and $C_1(\eta_0)=C_0+K_{12}(1+\log(1/\eta_0))$ is the least admissible scale;
$$A(s)=\log\left(\frac12\sqrt{\frac1{w(s)}+\frac1{\ti w(s)}+2}\right)$$
is the limiting value of $\log|a(t,s)|$ introduced at the beginning of this section; $\omega=K_4(\d^{1/2} e^{16C}\eta_0^{-6}+e^{-2C}\eta_0^{-1})$ is the additive error \eqref{eqOmega} of the elevation lemma, made arbitrarily small by decreasing $\d$ {\it and} increasing $C$; $M=\sup_n\int_{2^n}^{2^{n+1}}|f|$ is the dyadic mass of \eqref{eqDMR} and $M'=\max\left(M,\int_0^1|f|\right)$ is its adjusted version from Corollary \ref{corLac}.

Finally, we adapt the regular sets to the deviation level: besides the spectral-gap parameter $\eta_0$, the set on which the estimate at the level $\lambda$ is proved depends on $\lambda$ itself. For $\eta_0\in(0,\tfrac12]$, $\lambda>0$ and $M\geq0$ put
\begin{equation}
C(\eta_0,\lambda):=C_0+K_{13}\left(1+\log(1/\eta_0)+\log\left(1/\min(\lambda,1)\right)\right),
\label{eqCanon}
\end{equation}
with an absolute constant $K_{13}\geq K_{12}$ chosen large enough for the requirements of the proof below, let $\d(\eta_0,\lambda,M')>0$ be the explicit threshold fixed in that proof --- it does not exceed $e^{-\lambda}$ or the thresholds of the preceding lemmas at the scale \eqref{eqCanon} --- and define
\begin{equation}
\RR(T,\eta_0,\lambda):=\RR\left(T,\eta_0,C(\eta_0,\lambda),\d(\eta_0,\lambda,M')\right).
\label{eqCanonSet}
\end{equation}
The sets \eqref{eqCanonSet} are defined through the maximal functions only. Both a small and a large $\lambda$ make them more restrictive: a small $\lambda$ raises the scale, a large $\lambda$ lowers the threshold $\d$. For every fixed $(\eta_0,\lambda)$ they increase in $T$ and exhaust almost all of $\{\min(w,\ti w)\geq\eta_0\}$; at a given point $s$, a more restrictive choice requires a larger $T$.

\begin{theorem}[Maximal fluctuation estimate on regular sets]\label{thmMaxR}
Let $f\in L^2(\R_+)$ be real and satisfy \eqref{eqDMR}. There exist absolute constants $K$ and $N$ such that for all $\eta_0\in(0,\tfrac12]$, $\lambda>0$, $M\geq0$ and $T\geq1$,
\begin{equation}
\begin{gathered}
\left|\left\{s\in\RR(T,\eta_0,\lambda):\ \sup_{t\geq T}\big|\log|a(t,s)|-A(s)\big|>\lambda\right\}\right|\ \leq\\
\leq\ K\,(M'+1)\,\left(\eta_0\min(\lambda,1)\right)^{-N}\ \frac{\|f\|^2_{L^2((T/32,\infty))}}{\lambda}.
\end{gathered}
\label{eqMaxT}
\end{equation}
\end{theorem}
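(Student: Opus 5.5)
Fix $\eta_0,\lambda$, put $C=C(\eta_0,\lambda)$ from \eqref{eqCanon} and $\d=\d(\eta_0,\lambda,M')$, and choose $K_{13}$ and $\d$ so that the elevation error of Lemma \ref{lemElev} satisfies $\omega\leq\min(\lambda,1)/4$; both terms of \eqref{eqOmega} can be made small, $e^{-2C}\eta_0^{-1}$ by the logarithmic growth of $C$ in $1/\eta_0$ and $1/\min(\lambda,1)$, and $\d^{1/2}e^{16C}\eta_0^{-6}$ by a polynomial choice of $\d$ in $\eta_0\min(\lambda,1)$. Since $e^{16C}$ is then a power of $(\eta_0\min(\lambda,1))^{-1}$, all factors $e^{KC}$, $\eta_0^{-K}$ in the earlier lemmas become $(\eta_0\min(\lambda,1))^{-N}$. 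The bad set is split into three parts. $\mathcal S_1$ is the set of $s$ for which $(s,t)$ is $\d$-shallow for some $t\geq T_*=\max(T,T_\d)$. $\mathcal S_2$ is the set of non-shallow deviations at times $t\geq T_*$. $\mathcal S_3$ is the set of deviations at times $t\in[T,T_\d)$, which are automatically non-shallow by the choice of $T_\d$; it is treated exactly like $\mathcal S_2$, so I will assume $T_*=T$ below. Take the grid step $\rho=\min(\lambda,1)/4$.

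\emph{Shallow set.} By Lemma \ref{lemShallow}, $|\mathcal S_1|\leq K(M'/\rho+1)e^{8C}\eta_0^{-2}\|f\|^2_{L^2((T/32,\infty))}/\log(1/\d)$. Choose $\d\leq e^{-\lambda}$, so that $\log(1/\d)\geq\lambda$, and the bound has the required form. For small $\lambda$ the factor $1/\rho$ is absorbed into $(\min(\lambda,1))^{-N}$, and $1/\log(1/\d)\leq 1/\lambda$ when $\lambda\geq1$.

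\emph{Non-shallow deviations.} Let $s\in\RR(T,\eta_0,\lambda)\setminus\mathcal S_1$ and $t\geq T$ with $|u(t,s)-A(s)|>\lambda$. First pass to the grid: let $t_j\leq t<t_{j+1}$. By Lemma \ref{lemLip}, $|u(t,s)-u(t_j,s)|\leq\rho$, so $|u(t_j,s)-A(s)|>\lambda-\rho$ at a grid point $t_j\geq T/32$. If $t_j<T$, use instead the nearest grid point at or after $T$, and apply Lemma \ref{lemElev} only at times $\geq T$; it is simplest to apply the lemma directly at $t$. By Lemma \ref{lemElev}, 1), $E(t,\cdot)$ must have a zero in $Q(s,2C/t)$, because otherwise the deviation is at most $\omega<\lambda$. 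Let $v$ be the smallest such depth. There are two directions.
\begin{itemize}
\item \emph{Upward deviation}, $u>A+\lambda$. Part 2) gives $\tfrac12\log(1+1/v)\geq\lambda-\omega\geq\tfrac34\lambda$, that is, $v\leq(e^{3\lambda/2}-1)^{-1}$.
\item \emph{Downward deviation}, $u<A-\lambda$. This is only possible when $A>\lambda$. Part 3) gives $\tanh v\leq e^{-3\lambda/2}$ whenever $\tanh v\geq\eta_0$; otherwise $\tanh v<\eta_0$. In both cases $v\leq K\max(e^{-3\lambda/2},\eta_0)$.
\end{itemize}
In either case there is a zero of depth $v\leq v(\lambda)$ with $\Phi(v(\lambda),C)\gtrsim\min(\lambda,\log(1/\eta_0))$, up to the factor $e^{-8C}$. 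This is enough because $A\leq\tfrac12\log(1/\eta_0)$ forces $\lambda<\tfrac12\log(1/\eta_0)$ in the downward case.

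Now choose the grid point $t_j\in[t/2,t]$ preceding $t$. Such a point exists, since grid steps at most double. The pairs $(s,t_j)$, $(s,2t_j)$ are non-shallow, and Proposition \ref{propFirstEntry} needs a zero of $E(t_j,\cdot)$ in $Q(s,2C/t_j)$. The intended transfer is the following. The zero at time $t$ sits in $Q(s,2C/t)\subset Q(s,2C/t_j)$ after rescaling, and the descent estimate (Lemma \ref{lemDescent}) run backwards, together with the "no zero means $|E|$ near $w^{-1/2}$" contradiction used in Lemma \ref{lemShallow}, produces a zero at $t_j$ of depth at most $v^\sharp=v\,e^{K(M'+C+1)}$. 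Then Proposition \ref{propFirstEntry} at $t_j$, summed over the grid with \eqref{eqGridMass}, bounds the measure by
\[
K\,(M'/\rho+1)\,e^{8C}\eta_0^{-2}\,\|f\|^2_{L^2((T/32,\infty))}\big/\Phi(v^\sharp,C).
\]
The final point is a lower bound for $\Phi(v^\sharp,C)$. When $\lambda$ is large, $v$ is exponentially small and $\Phi\gtrsim\lambda-K(M'+C)$. To avoid losing $M'$ inside the logarithm, I would instead use the graded form of Lemma \ref{lemShallow}, or choose $\d\le e^{-\lambda}$ and handle the regime where $v^\sharp$ is large through the bound $\Phi\geq c e^{-8C}/v^\sharp$. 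When $\lambda\le1$, $\Phi\gtrsim e^{-KC}$, which is a power of $\min(\lambda,1)\eta_0$, and the missing factor $1/\lambda$ is absorbed since $\lambda\leq1$.

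\emph{Main obstacle.} The hardest step is transferring the deep zero from time $t$ back to the grid time $t_j$ with only a factor $e^{K(M'+C)}$ loss in depth, while keeping a lower bound $\Phi\gtrsim\lambda$ for large $\lambda$ rather than $\lambda-KM'$. I would first try the argument of the proof of Lemma \ref{lemShallow} verbatim, via value rigidity at $x_0=\Re\zeta$. For the large-$\lambda$ bookkeeping I would fall back on the trivial Chebyshev/Parseval bound when $\lambda\lesssim M'+C$: there, for $u\ge\lambda$ the bound \eqref{eqPars} is only needed on the tail. This is absorbed by the prefactor $(M'+1)$, which in the stated inequality sits outside the power.
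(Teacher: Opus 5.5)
\textbf{There is a genuine gap, and you created it yourself.} You begin correctly by passing to the grid with Lemma \ref{lemLip}. Then you drop that (``it is simplest to apply the lemma directly at $t$'') and apply Lemma \ref{lemElev} at the deviation time $t$. Proposition \ref{propFirstEntry} can only be summed over the countable grid, so you are now forced to move the resonance from $t$ back to a grid time $t_j$. That step is not available:

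\begin{enumerate}
\item \emph{The transfer argument needs small depth.} The pattern/value-rigidity argument of Lemma \ref{lemShallow} produces a zero at the earlier time only by contradicting the resonance-free value $|E(t_j,x_0)|\approx w^{-1/2}$. At time $t$ you only know $|E(t,x_0)|\approx\sqrt{\tanh v/w}$. So the argument needs $\tanh v$ well below $e^{-2\int_{t_j}^t|f|}$. In Lemma \ref{lemShallow} this is automatic, because $v=\uv$ and $\d\leq\d_5$. Here it is not. Even with the mass $\leq\rho$ of a single grid step, you need $\tanh v<e^{-2\rho}$. This fails for the depths, up to $\min(1/\lambda,C)$, that a small deviation can force.
\item \emph{The quoted loss is too large.} With $v^\sharp=ve^{K(M'+C+1)}$, the quantity $1/\Phi(v^\sharp,C)$ is exponential in $M'$ whenever $\lambda\lesssim M'$. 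Such a loss cannot be absorbed into the polynomial prefactor $(M'+1)$. Also, $v^\sharp>C$ takes you outside the hypotheses of Proposition \ref{propFirstEntry}.
\item \emph{Lemma \ref{lemDescent} does not help.} It presupposes a continuous family of zeros staying in the boxes with depths $\leq1$ on $[t_j,t]$, which you do not have.
\item \emph{The Chebyshev/Parseval fallback is invalid.} \eqref{eqPars} bounds $|\{u(t,\cdot)>\lambda\}|$ for one fixed $t$, and with the full norm $\|f\|_{L^2((0,t))}$. The theorem is about $\sup_{t\geq T}|u-A|$ with the tail norm, and controlling that supremum is the whole content.
\end{enumerate}

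\textbf{The fix is to transfer the deviation, not the zero.} This is exactly the paper's argument, in Proposition \ref{propMaxPar}.
\begin{itemize}
\item For $t\in[t_j,t_{j+1})$, Lemma \ref{lemLip} gives $|u(t_j,s)-A(s)|>\lambda-\rho$. Your own remark that one may use $t_{j+1}$ when $t_j<T$ is also fine, since $[t,t_{j+1}]$ lies in one step.
\item Apply Lemma \ref{lemElev} at that grid time. Part 1) forces a zero of $E(t_j,\cdot)$ in $Q(s,2C/t_j)$.
\item Parts 2) and 3) bound its smallest depth by $v_*=\min\bigl((e^{2\lambda'}-1)^{-1},C\bigr)$, where $\lambda'=\lambda-\rho-\omega\geq\lambda/2$.
\item The cap $v<C$, which you omit but Proposition \ref{propFirstEntry} requires, holds because $v\geq C$ would give deviation at most $\frac12\log\coth C+\omega\leq 2e^{-2C}+\omega$.
\item Proposition \ref{propFirstEntry} then applies directly at $t_j$, and the sum over $j$ is controlled by \eqref{eqGridMass}. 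So $M'$ enters only through the count $2M'/\rho+1$.
\item The elementary bound $\Phi(v_*,C)\geq 2^{-17}e^{-8C}\lambda'/(C+1)$, as in \eqref{eqPhiLow}, gives the factor $1/\lambda$. The loss is $e^{8C}(C+1)\eta_0^{-2}/\rho$, a power of $(\eta_0\min(\lambda,1))^{-1}$ by \eqref{eqCanon}.
\end{itemize}

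The rest of your bookkeeping agrees with the paper: the shallow set via Lemma \ref{lemShallow} with $\d\leq e^{-\lambda}$, the choice of $C$ and $\d$ making $\omega\lesssim\min(\lambda,1)$, and the treatment of the two signs of the deviation.
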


\begin{remark}\label{remOneDial}
For $\lambda\geq1$ the estimate is the weak-type bound with the constant $K(M'+1)\eta_0^{-N}$, and the set depends on $\lambda$ only through the requirement $\d\leq e^{-\lambda}$: on a fixed set $\RR(T,\eta_0,1)$ the same bound holds for all $1\leq\lambda\leq\log(1/\d)$. For $\lambda<1$ the factor $\lambda^{-N}$ comes from the limited accuracy of the models: by the intrinsic accuracy limit of Proposition \ref{propU3} (see \eqref{eqEpsU3} and the remark following it), the resonance-free models at a scale $C$ carry the irreducible error $\gtrsim e^{-2C}\eta_0^{-1}$, so deviations of size $\lambda$ are detectable only at scales $C\gtrsim\log(1/\eta_0\lambda)$, and the constants of Sections \ref{secTE}--\ref{secMax} are exponential in the scale.
\end{remark}

The theorem follows from the parametric estimate that the machinery of Sections \ref{secRep}--\ref{secMax} gives directly; the reader interested in the interplay of the parameters may use it in place of the canonical choices \eqref{eqCanon}, \eqref{eqCanonSet}.

\begin{proposition}[parametric maximal estimate]\label{propMaxPar}
Let $f\in L^2(\R_+)$ be real and satisfy \eqref{eqDMR}. There exists an absolute constant $K$ with the following property. Fix $\eta_0\in(0,\tfrac12]$, $C\geq C_1(\eta_0)$ and $M'\geq0$; then there is a threshold $\d_2(C,\eta_0,M')>0$ such that the following holds for all $\d\leq\d_2(C,\eta_0,M')$.

Let $T\geq1$, let $\rho\in(0,1]$ be the step of the grid \eqref{eqGrid}, and let $\lambda>0$. Then
\begin{equation}
\begin{gathered}
\left|\left\{s\in\RR(T,\eta_0,C,\d):\ \sup_{t\geq T}\big|\log|a(t,s)|-A(s)\big|>\lambda+\rho+2\omega\right\}\right|\ \leq\\
\leq\ K\,(M'+1)\,e^{16C}(C+1)\,\eta_0^{-2}\left[\frac{\|f\|^2_{L^2((T/32,\infty))}}{\rho\,\lambda}
+\frac{\|f\|^2_{L^2((T^\sharp,\infty))}}{\rho\,\log(1/\d)}\right],
\end{gathered}
\label{eqMaxR}
\end{equation}
where $\omega=\omega(\d,\eta_0,C)$ is the elevation error \eqref{eqOmega} and $T^\sharp=\max(T,T_\d)/32$, as in Lemma \ref{lemShallow}. In particular, for $\lambda\leq\log(1/\d)$ the second term is dominated by the first with the tail $\|f\|^2_{L^2((T^\sharp,\infty))}$ in place of $\|f\|^2_{L^2((T,\infty))}$; and since $T_\d\to\infty$ as $\d\to0$, the second term vanishes in every limit in which $\d\to0$.
\end{proposition}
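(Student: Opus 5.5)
The plan is to discretize the supremum along the grid \eqref{eqGrid}, classify each grid time by its resonance configuration near $s$, and charge every large deviation to an $L^2$ event counted by Proposition \ref{propFirstEntry} or Lemma \ref{lemShallow}.

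\emph{Step 1: reduce to the grid.} For $t\in[t_j,t_{j+1}]$, Lemma \ref{lemLip} gives $|u(t,s)-u(t_j,s)|\leq\rho$, since each grid step carries $|f|$-mass at most $\rho$. If $\sup_{t\geq T}|u(t,s)-A(s)|>\lambda+\rho+2\omega$, there is therefore a grid point $t_j\geq T/32$ with $|u(t_j,s)-A(s)|>\lambda+2\omega$. The range $[T/32,T)$ is handled by looking back from $t\geq T$; this is exactly the slack built into the definition of $\RR$.

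\emph{Step 2: discard the shallow points.} Remove the set of $s$ for which some $(s,t)$ with $t\geq T_*$ is $\d$-shallow. Lemma \ref{lemShallow} bounds its measure by the second term of \eqref{eqMaxR}. For $t<T_\d$ no pair is shallow at all, by the definition \eqref{eqTdelta}. For the remaining $s$, every pair $(s,t_j)$ and $(s,2t_j)$ is non-shallow, so Lemmas \ref{lemRep}, \ref{lemPair}, \ref{lemElev} and \ref{lemSweep} all apply.

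\emph{Step 3: elevation dichotomy at each grid point.} Apply Lemma \ref{lemElev} at $(s,t_j)$.
\begin{itemize}
\item If $E(t_j,\cdot)$ has no zero in $Q(s,2C/t_j)$, then $|u-A|\leq\omega$, which is impossible.
\item If there is a zero of smallest depth $v\leq 2C$, then part 2) gives $u\leq A+\tfrac12\log(1+1/v)+\omega$. An upward deviation $>\lambda+2\omega$ therefore forces $v\leq v_\lambda:=(e^{2\lambda}-1)^{-1}$.
\end{itemize}
For downward deviations, part 3) gives $u\geq A+\tfrac12\log\tanh v-\omega$ whenever $\tanh v\geq\eta_0$. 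This forces $\tanh v\leq\max(\eta_0,e^{-2\lambda})$, and in particular $v\leq\max(K\eta_0,Ke^{-2\lambda})$, a bounded depth. Since $u\geq0$ and $A\leq\tfrac12\log(1/\eta_0)$, a downward deviation is only possible when $\lambda<\tfrac12\log(1/\eta_0)$. In both directions we obtain a zero in $Q(s,2C/t_j)$ of depth $\leq v^*(\lambda)$, with $v^*\leq C$ once $C\geq C_1$ and $K_{13}$ is large.

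\emph{Step 4: charge the events and sum.} For fixed $j$, Proposition \ref{propFirstEntry} bounds the set of such $s$ by $2^{17}\pi e^{8C}\eta_0^{-2}\Phi(v^*,C)^{-1}\|f\|^2_{L^2((t_j,2t_j))}$. Summing over $j$ with \eqref{eqGridMass} produces the factor $(2M'/\rho+1)$. It remains to show
\[
\Phi(v^*,C)^{-1}\lesssim e^{8C}(C+1)/\lambda .
\]
In the upward case with $\lambda\geq1$, $v^*\approx e^{-2\lambda}$, so $\Phi\approx\lambda-4C$. I would handle this by comparing with $\lambda$ when $\lambda\geq 8C+K$; otherwise I use $\Phi\geq c\,e^{-8C}/C$, valid for $v^*\leq C$, together with $\lambda\leq 8C+K$. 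This gives $\Phi^{-1}\leq K e^{8C}(C+1)/\lambda$. For small $\lambda$, $v^*\lesssim 1/\lambda$ is possibly larger than $C$, so I would cap it at $\min(v^*,C)$ via the $2C$ box. Then $\Phi\geq c\,e^{-8C}/C$, and $1\leq C/\lambda$ is absorbed by the factor $e^{8C}(C+1)/\lambda$.

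\emph{Main obstacle.} The obstacle I anticipate is this case bookkeeping: keeping $v^*\leq C$, which is needed to stay within the hypotheses of Lemma \ref{lemSweep}. The free far-endpoint case requires $v_1\leq C$, not $2C$, so zeros with depth in $(C,2C]$ have to be handled separately. For those, Lemma \ref{lemElev} gives $|u-A|\leq\tfrac12\log\coth C+\omega\leq e^{-2C}+\omega$, which is below $\lambda$ once $C\gtrsim\log(1/\lambda)$. With $C$ and $\d$ canonical as in \eqref{eqCanon}, the resulting constants become $(\eta_0\min(\lambda,1))^{-N}$. The threshold $\d_2$ collects $\d_1$, $\d_5$ and $\d_0$.
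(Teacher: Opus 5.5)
Your outline follows the paper's proof: grid reduction by Lemma \ref{lemLip}, removal of the set of Lemma \ref{lemShallow}, the dichotomy of Lemma \ref{lemElev}, then Proposition \ref{propFirstEntry} summed via \eqref{eqGridMass}. However, the step you single out as the main obstacle is not closed for the statement at hand.

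In Proposition \ref{propMaxPar} the scale $C\geq C_1(\eta_0)$ and the threshold $\d_2(C,\eta_0,M')$ are fixed independently of $\lambda$, and the bound is claimed for \emph{every} $\lambda>0$. The canonical scale \eqref{eqCanon} and the constant $K_{13}$ belong to Theorem \ref{thmMaxR}, which is deduced from this proposition. So you may not rule out smallest depths $v\in(C,2C]$ by requiring $C\gtrsim\log(1/\lambda)$. For $\lambda\ll e^{-2C}$ your argument leaves such points unaccounted for. For them neither the free case of Lemma \ref{lemSweep} (which needs $v_1\leq C$) nor Proposition \ref{propFirstEntry} (stated for $v\leq C$) is available.

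The missing observation is that the threshold carries $2\omega$, not $\omega$, exactly to absorb this case. By \eqref{eqOmega}, $\omega\geq K_4e^{-2C}\eta_0^{-1}\geq 2e^{-2C}\geq\frac12\log\coth C$. So if $v\geq C$, parts 2) and 3) of Lemma \ref{lemElev} give $|u(t_j,s)-A(s)|\leq\frac12\log\coth C+\omega\leq 2\omega$; part 3) applies because $\tanh C\geq\eta_0$. This contradicts the deviation $>\lambda+2\omega$ for every $\lambda>0$. You had this inequality in hand but compared it with $\lambda$ instead of $\lambda+2\omega$. Hence $v<C$ always, and then $v\leq v_*(\lambda)=\min\left((e^{2\lambda}-1)^{-1},C\right)$ with no condition linking $C$ and $\lambda$.

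There are also some smaller points.

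\emph{Downward case.} Combine the two facts you state: $\tanh v<\max(\eta_0,e^{-2\lambda})$ and $\eta_0<e^{-2\lambda}$ give $\tanh v<e^{-2\lambda}$. Hence $v<\arctanh(e^{-2\lambda})\leq(e^{2\lambda}-1)^{-1}$, the same bound as in the upward case. Your bound $v\leq\max(K\eta_0,Ke^{-2\lambda})$ is false for small $\lambda$, since $\arctanh$ blows up at $1$.

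\emph{Middle range of $\lambda$.} For $1\leq\lambda\leq 8C+K$, the estimate $\Phi\geq ce^{-8C}/C$ only yields $\Phi^{-1}\leq Ke^{8C}(C+1)^2/\lambda$, one factor of $C$ worse than \eqref{eqMaxR}. Use instead $v_*\leq(e^2-1)^{-1}$ in that range, which gives $\Phi\geq ce^{-8C}$. The paper instead splits according to whether $x=2^{-13}e^{-8C}/v_*\leq1$. It uses $1/v_*\geq2\lambda$ when $x\leq1$, and $\Phi\geq\max(\frac12\log2,\ \lambda-4C-5)$ when $x>1$.

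\emph{Times below $T$.} You should apply the lemmas only at times $\geq T$, where $s\notin H$ really excludes shallow pairs. To do so, compare $u(t,s)$ with $u(t_{j+1},s)$, where $t_{j+1}\geq t\geq T$; Lemma \ref{lemLip} allows this equally well.
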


The additive term $\rho+2\omega$ and the two terms on the right-hand side of \eqref{eqMaxR} reflect the three mechanisms of the proof: the grid step $\rho$ accounts for the Lipschitz oscillation within a step, the elevation error $\omega$ collects the model errors, and the second term bounds the contribution of the shallow layer; the latter vanishes in every limit in which $\d\to0$, since $T_\d\to\infty$.

\begin{proof}
We take $\d_2\leq\min(\d_0,\d_1,\d_5)(C,\eta_0,M')$, so that Lemmas \ref{lemPair}--\ref{lemShallow} and Proposition \ref{propFirstEntry} apply. Let $H$ denote the set of Lemma \ref{lemShallow}; its measure is bounded by the second term of \eqref{eqMaxR}, so it suffices to estimate the set on the left with $H$ removed. For $s\notin H$ no pair $(s,t)$, $t\geq T$, is $\d$-shallow: for $t\leq T_\d$ this holds by the choice \eqref{eqTdelta} of $T_\d$, and for $t\geq T_*$ by the definition of $H$; in particular the hypotheses of Lemmas \ref{lemPair}--\ref{lemElev} and of Proposition \ref{propFirstEntry} hold at every $t\geq T$. Take the grid \eqref{eqGrid} with the step $\rho$. By Lemma \ref{lemLip}, $|u(t,s)-u(t_j,s)|\leq\rho$ for $t\in[t_j,t_{j+1}]$; hence if the supremum in \eqref{eqMaxR} exceeds $\lambda+\rho+2\omega$ then $|u(t_j,s)-A(s)|>\lambda+2\omega$ for some $j$. By Lemma \ref{lemElev}, part 1) is then impossible at $(s,t_j)$, so $E(t_j,\cdot)$ has zeros in $Q(s,2C/t_j)$; let $v$ be the smallest of their depths (as $s\notin H$, $v\geq\uv$). We claim that in both cases
\begin{equation}
v\ \leq\ v_*(\lambda):=\min\left(\left(e^{2\lambda}-1\right)^{-1},\ C\right).
\label{eqDepthForced}
\end{equation}
Observe first that $v<C$. Indeed, if $v\geq C$, then parts 2) and 3) of Lemma \ref{lemElev} (the latter applies since $\tanh v\geq\tanh C\geq\eta_0$) give $|u(t_j,s)-A(s)|\leq\frac12\log\coth C+\omega\leq2e^{-2C}+\omega\leq2\omega$, contradicting the deviation $>\lambda+2\omega$; here $2e^{-2C}\leq\omega$ by \eqref{eqOmega}. If $u(t_j,s)-A(s)>\lambda+2\omega$, part 2) of Lemma \ref{lemElev} gives $\frac12\log(1+1/v)>\lambda$, which together with $v<C$ is \eqref{eqDepthForced}. Suppose $u(t_j,s)-A(s)<-\lambda-2\omega$. Since $u\geq0$, this is possible only when $\lambda<A(s)\leq\frac12\log(1/\eta_0)$, i.e. $\eta_0<e^{-2\lambda}$. If $\tanh v\geq\eta_0$, part 3) gives $\tanh v<e^{-2\lambda}$; if $\tanh v<\eta_0$, then $\tanh v<e^{-2\lambda}$ holds trivially. In both subcases
$$v\ <\ \arctanh\left(e^{-2\lambda}\right)\ \leq\ \frac{e^{-2\lambda}}{1-e^{-2\lambda}}\ =\ \left(e^{2\lambda}-1\right)^{-1},$$
where the middle inequality is $\log y\leq y-1$ applied to $\arctanh x=\frac12\log\frac{1+x}{1-x}\leq\frac x{1-x}$. This proves \eqref{eqDepthForced}: a large deviation of either sign forces a shallow resonance.

Therefore the set in \eqref{eqMaxR}, with $H$ removed, is contained in the union over $j$ of the sets of Proposition \ref{propFirstEntry} with $v=\max(v_*(\lambda),\uv)$; since $\Phi(\max(v_*,\uv),C)\geq\min\left(\Phi(v_*(\lambda),C),\,c\log(1/\d)\right)$ and the $\log(1/\d)$-branch is again absorbed by the second term of \eqref{eqMaxR}, we may use $\Phi(v_*(\lambda),C)$. By \eqref{eqGridMass},
$$\left|\left\{\cdots\right\}\right|\ \leq\ \frac{2^{17}\pi e^{8C}}{\eta_0^2\,\Phi(v_*(\lambda),C)}\left(\frac{2M'}\rho+1\right)\|f\|^2_{L^2((T/32,\infty))}.$$
It remains to show that
\begin{equation}
\Phi(v_*(\lambda),C)\ \geq\ 2^{-17}\,\frac{e^{-8C}}{C+1}\;\lambda\qquad\text{ for all }\lambda>0.
\label{eqPhiLow}
\end{equation}
First, $1/v_*(\lambda)\geq2\lambda$ in all cases: if $v_*(\lambda)=(e^{2\lambda}-1)^{-1}$ this is $e^{2\lambda}-1\geq2\lambda$, and if $v_*(\lambda)=C<(e^{2\lambda}-1)^{-1}$ then $2\lambda\leq e^{2\lambda}-1<1/C$. Put $x=2^{-13}e^{-8C}/v_*(\lambda)$. If $x\leq1$, then $\Phi=\frac12\log(1+x)\geq x/4\geq2^{-15}e^{-8C}\cdot2\lambda$, which is stronger than \eqref{eqPhiLow}. If $x>1$, then $e^{2\lambda}-1\geq1/v_*(\lambda)>2^{13}e^{8C}$, so $\lambda\geq4C$; in this range $\Phi\geq\frac12\log x\geq\lambda-4C-5$, which is $\geq\lambda/2$ for $\lambda\geq8C+10$, while for $4C\leq\lambda\leq8C+10$ we still have $\Phi\geq\frac12\log2\geq\frac{\log2}2\cdot\frac\lambda{8C+10}$; both bounds imply \eqref{eqPhiLow}. Combining the two displays gives \eqref{eqMaxR}.
\end{proof}

\begin{proof}[Proof of Theorem \ref{thmMaxR}]
Set $C=C(\eta_0,\lambda)$ as in \eqref{eqCanon}, $\rho=\min(\lambda,1)/8$, and
$$\d(\eta_0,\lambda,M'):=\min\left(\d_2\left(C,\eta_0,M'\right),\ e^{-\lambda},\ c\min(\lambda,1)^2\,\eta_0^{12}\,e^{-32C}\right)$$
with a small absolute constant $c$. Then $2\omega\leq3\min(\lambda,1)/8$: the $\d$-term of $2\omega$ is $2K_4\d^{1/2}e^{16C}\eta_0^{-6}\leq2K_4c^{1/2}\min(\lambda,1)\leq\min(\lambda,1)/4$ for $c$ small, and the $\d$-independent term is $2K_4e^{-2C}\eta_0^{-1}\leq2K_4e^{-2C_0-2K_{13}}\left(\eta_0\min(\lambda,1)\right)^{2K_{13}}\eta_0^{-1}\leq\min(\lambda,1)/8$ by the choice of $K_{13}$. Hence
$$\frac\lambda2+\rho+2\omega\ \leq\ \frac\lambda2+\frac{\min(\lambda,1)}2\ \leq\ \lambda,$$
so the set in \eqref{eqMaxT} is contained in the set of \eqref{eqMaxR} taken at the level $\lambda/2$. Since $\log(1/\d)\geq\lambda$, the second term of \eqref{eqMaxR} is at most the first (with $\|f\|^2_{L^2((T^\sharp,\infty))}\leq\|f\|^2_{L^2((T/32,\infty))}$), and
$$e^{16C}\left(C+1\right)\,\frac{\eta_0^{-2}}{\rho}\ \leq\ K\,\left(\eta_0\min(\lambda,1)\right)^{-N}$$
with an absolute $N$, by \eqref{eqCanon}. Collecting the factors gives \eqref{eqMaxT}.
\end{proof}

\begin{corollary}\label{corConv}
Let $f\in L^2(\R_+)$ be real and satisfy \eqref{eqDMR}. Then for a.e. $s\in\R$,
$$\lim_{t\to\infty}\log|a(t,s)|\ =\ A(s) .$$
\end{corollary}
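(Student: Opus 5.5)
The corollary follows from Theorem \ref{thmMaxR} by the usual Borel--Cantelli type exhaustion argument. Fix $\eta_0\in(0,\tfrac12]$ and $\lambda>0$; it suffices to show that
$$\limsup_{t\to\infty}\big|\log|a(t,s)|-A(s)\big|\ \leq\ \lambda$$
for a.e. $s\in\{\min(w,\ti w)\geq\eta_0\}$. Once this is shown, we take $\lambda=1/k$, $k\in\mathbb N$, and $\eta_0=1/m$, $m\geq2$, and use two facts: a countable union of null sets is null, and $\bigcup_{\eta_0>0}\{\min(w,\ti w)\geq\eta_0\}$ has full measure because $w,\ti w\neq0$ a.e. by the Szeg\"o condition. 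These give the corollary. The case $M=\infty$ is excluded by hypothesis \eqref{eqDMR}, and $M'<\infty$ because $f\in L^2$.

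Let $X$ be the set of $s\in\{\min(w,\ti w)\geq\eta_0\}$ where the limsup above exceeds $\lambda$. For each $T\geq1$, every $s\in X\cap\RR(T,\eta_0,\lambda)$ satisfies $\sup_{t\geq T'}|\log|a(t,s)|-A(s)|>\lambda$ for every $T'\geq T$. The sets $\RR(T,\eta_0,\lambda)$ increase in $T$, so for $T'\geq T$ we have $X\cap\RR(T,\eta_0,\lambda)\subset X\cap\RR(T',\eta_0,\lambda)$. Theorem \ref{thmMaxR} applied at time $T'$ then gives
$$|X\cap\RR(T,\eta_0,\lambda)|\ \leq\ K(M'+1)\left(\eta_0\min(\lambda,1)\right)^{-N}\lambda^{-1}\,\|f\|^2_{L^2((T'/32,\infty))}.$$
The right-hand side tends to $0$ as $T'\to\infty$ because $f\in L^2(\R_+)$, so $X\cap\RR(T,\eta_0,\lambda)$ is null for every $T$. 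These sets exhaust almost all of $\{\min(w,\ti w)\geq\eta_0\}$ as $T\to\infty$ through integers, by Proposition \ref{propAae}. Hence $X$ is null.

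The one point needing care is the measurability of the exceptional sets, in particular of $\sup_{t\geq T}|\log|a(t,s)|-A(s)|$ as a function of $s$. Since $\log|a(t,s)|$ is continuous in $t$ (Lemma \ref{lemLip}), the supremum may be taken over rational $t$, and measurability follows. The other nontrivial input is the exhaustion property of the regular sets, which we take from Proposition \ref{propAae} as stated. No further obstacle is expected, since all the analytic content is carried by Theorem \ref{thmMaxR}.
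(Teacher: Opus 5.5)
Your proposal is correct and follows the paper's own argument. You fix $\eta_0$ and $\lambda$, apply Theorem~\ref{thmMaxR} at times $T'\to\infty$ using that the regular sets increase in $T$ and exhaust almost all of $\{\min(w,\ti w)\geq\eta_0\}$, and then take countable sequences $\lambda\downarrow0$, $\eta_0\downarrow0$. The paper compresses into the phrase ``letting $T\to\infty$'' the monotonicity step $X\cap\RR(T,\eta_0,\lambda)\subset X\cap\RR(T',\eta_0,\lambda)$ and the measurability remark, both of which you spell out.
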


\begin{proof}
Fix $\eta_0$ and $\lambda\in(0,1]$. By Theorem \ref{thmMaxR}, letting $T\to\infty$ and recalling that the sets $\RR(T,\eta_0,\lambda)$ increase to a set of full measure in $\{\min(w,\ti w)\geq\eta_0\}$, we conclude that for a.e. $s$ in the latter set, $\limsup_{t\to\infty}|u(t,s)-A(s)|\leq\lambda$. Taking countable sequences $\lambda\downarrow0$ and then $\eta_0\downarrow0$ (recall that $\min(w,\ti w)>0$ a.e.), we obtain the convergence.
\end{proof}

Recall that the limit function satisfies the Parseval identity $\int_\R A(s)\,ds=\frac\pi2\|f\|^2_{L^2(\R_+)}$ (see for instance \cite{Den}); by Chebyshev's inequality,
\begin{equation}
|\{s\in\R:\ A(s)>\lambda\}|\ \leq\ \frac{\pi\|f\|_2^2}{2\lambda},\qquad\lambda>0.
\label{eqAweak}
\end{equation}

Combining Theorem \ref{thmMaxR} with \eqref{eqAweak} and Lemma \ref{lemLip} gives a weak-type maximal estimate for $u^*$ on the regular sets: for every $\lambda>0$,
\begin{equation}
\begin{gathered}
\left|\left\{s\in\RR(T,\eta_0,\lambda):\ u^*(s)>2\lambda+\int_0^T|f|\right\}\right|\ \leq\\
\leq\ K\,(M'+1)\,\left(\eta_0\min(\lambda,1)\right)^{-N}\ \frac{\|f\|^2_{2}}{\lambda},
\end{gathered}
\label{eqMaxRstar}
\end{equation}
since $u^*\leq\max\left(\int_0^T|f|,\ \sup_{t\geq T}u(t,\cdot)\right)$ and the set $\{A>\lambda\}$ is estimated by \eqref{eqAweak}.

\begin{remark}
The power $N$ and the thresholds $\d(\eta_0,\lambda,M')$ are what the proofs give; no attempt has been made to optimize them.
\end{remark}

\begin{remark}\label{remExtract}
The only non-quantitative ingredient of the arguments above is the rate at which the kernel-approximation error in (U2) tends to zero, which for a general $f\in L^2$ admits no modulus; this is the reason the estimates are localized to the regular sets $\RR$. The pointwise convergence of the transform $\nlhat f_t$ itself, which requires control of the relative phases of the scattering data and different methods, will be addressed elsewhere; Corollary \ref{corConv} recovers its modulus layer, $|\nlhat f_t|\to|\nlhat f|$ a.e., through the identity $u=-\frac12\log(1-|\nlhat f_t|^2)$.
\end{remark}

\begin{remark} The restriction of Theorem \ref{thmMaxR} to the modulus $|a|$ is essential. As shown in \cite{Den2}, there exist potentials for which $\arg a(t,s)$ diverges as $t\to\infty$ at every $s$, while $|a(t,s)|$ and $|b(t,s)|$ converge and the maximal function of $\log|a|$ remains finite. The phase $\arg a=H(\log|a(t,\cdot)|)(s)$ belongs to the conjugate-function layer of the problem, where only weak-type substitutes may survive; Theorem \ref{thmMaxR} concerns the layer controlled by the Parseval identity \eqref{eqPars}.
\end{remark}

\appendix

\section{Universality conditions from maximal functions}\label{secApp}

In this appendix we prove condition (U2) from the maximal conditions (M1)--(M3) of Section \ref{secU}. The arguments below are quantitative versions of the universality machinery of \cite{Scatter}.

Throughout the appendix $s\in\R$ is fixed and $w=w(s)>0$; we use the Poisson kernel $P_z$, the cones $\G_\s$, the truncated maximal functions $M_h$, $\MM_h$, $\MM^2_h$, the outer functions $G$, $\ti G$ and the conditions (M1)--(M3), whose truncations are at the height $50C/t$, all as introduced in Section \ref{secU}. We will use repeatedly two elementary facts (see \cite{G}, Ch.~I): the truncated non-tangential maximal function $M_h$ of a measure is dominated, on $\R$, by an absolute constant times the Hardy--Littlewood maximal function, and hence satisfies the weak-type $(1,1)$ inequality; and for $z=x'+ia$, $z'=ib$ with $0<a\leq b$,
\begin{equation}
P_{x'+ia}(x)\ \leq\ \frac{2\left((x')^2+b^2\right)}{ab}\ P_{ib}(x)\qquad\text{ for all }x\in\R .
\label{eqAker}
\end{equation}

\begin{theorem}\label{thmAppMain}
There exists an absolute constant $K_8$ with the following property. Let $C\geq1$, $\e\in(0,\tfrac14]$, and suppose that $(s,t)$ satisfies {\rm(M1)--(M3)} with
\begin{equation}
\e_0\ \leq\ \e^{8}\,\min\left(w(s),\ti w(s),1\right)^{8}e^{-K_8C}.
\label{eqAthr}
\end{equation}
Then {\rm(U2)} holds at $(s,t)$ with the given $\e$.
\end{theorem}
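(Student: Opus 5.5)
The strategy is to prove (U2) by comparing the de Branges reproducing kernel $K(t,\l,z)$ of $B(E(t,\cdot))\subset L^2(\mu)$ with the Paley--Wiener kernel scaled by $1/w$, namely $\S(t,\l,z)/w$. Note that $\pi\S(t,\l,z)=\sin[t(z-\bar\l)]/(z-\bar\l)$, and by \eqref{eqKD}, ${\mathcal D}=\pi(\bar\l-z)K$. So (U2) is equivalent to
$$|K(t,\l,z)-\S(t,\l,z)/w|\lesssim \e/|\bar\l-z|$$
on $Q(s,16C/t)$, and in fact it suffices to control the difference uniformly up to a factor $t$: the error in ${\mathcal D}$ is analytic in $z$ and antianalytic in $\l$, so a bound on $K-\S/w$ of size $\e t$ on a slightly larger box, together with the maximum principle applied to ${\mathcal D}-\sin/w$ (which vanishes nowhere special but is entire), gives the bound on ${\mathcal D}$. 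I would therefore work on the box $Q(s,32C/t)$ and absorb the geometric losses into $e^{K_8C}$.

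\textbf{Step 1 (diagonal upper bound).} For $\l\in Q(s,32C/t)$, bound $K(t,\l,\l)=\sup\{|F(\l)|^2:\|F\|_\mu\le1\}$ from above by comparing $\mu$ with $w(s)\,dm$. Given $F\in\PW_t$ with $\|F\|_\mu\le1$, write $\|F\|^2_\mu\ge\int|F|^2w$. Using the outer function $G$, the function $FG$ lies in $H^2$-type spaces with norm at most $1$, and
$$F(\l)G(\l)=\int (FG)(x)\,(\text{Cauchy/Poisson kernel at }\l),$$
with an exponential factor $e^{t|\Im\l|}\le e^{32C}$. (M1) and (M2) give $|G(\l)|^2=w(s)(1\pm K\e_0^{1/2}e^{KC})$ when $\Im\l\gtrsim 1/t$, via Jensen/Poisson-averaging of $\log w$, with truncation height $50C/t$ covering the relevant cones; points near $\R$ are handled by a subharmonicity/Bernstein step. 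This yields $K(t,\l,\l)\le(1+K\e_0^{1/4}e^{KC})\S(t,\l,\l)/w$.

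\textbf{Step 2 (diagonal lower bound).} Test the extremal problem with $F=\S(t,\l,\cdot)$ multiplied by a localizing factor, for example $\S(\e' t,\l,\cdot)^2$-type damping with the type rebalanced, so that the mass of $|F|^2$ far from $s$ is negligible. Then estimate $\|F\|_\mu^2$ from above using (M3), which bounds $\mu$ including its singular part, by $(1+\e_0)w(s)$ in Poisson averages. Using \eqref{eqAker} to dominate $|F|^2$ by a combination of Poisson kernels at heights $\le50C/t$, this gives $\|F\|^2_\mu\le(1+K\e_0e^{KC})\,w\,\|F\|_2^2$, and hence the matching lower bound.

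\textbf{Step 3 (off-diagonal).} Near-equality in the extremal problem gives closeness of the extremals by the usual Hilbert-space argument:
$$\|K_\l/\|K_\l\|-h\|^2\le 2\bigl(1-|\langle K_\l,h\rangle|/\|K_\l\|\bigr)$$
for the competitor $h$ from Step 2. Evaluating at $z$ then yields
$$|K(t,\l,z)-\S(t,\l,z)/w|\le \e_0^{c}e^{KC}w^{-1}\,t.$$
The square roots appearing here are the source of the power $8$ in \eqref{eqAthr}. Multiplying by $\pi|\bar\l-z|\le Kt^{-1}C$ and choosing $K_8$ large gives (U2). The Dirichlet half is identical with $\ti w,\ti G,\ti\mu$.

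The main obstacle is Step 1 at points $\l$ very close to $\R$, where the cones of the maximal functions degenerate. I would handle it by proving the estimate at height $\Im\l=1/t$ and transferring downward via the analyticity in the box described in the opening reduction.
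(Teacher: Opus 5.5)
The overall plan of your proposal broadly matches the paper's: a sharp upper bound for $K(t,\l,\l)$, a sharp bound for $\|\S(t,\l,\cdot)\|^2_\mu$, a Hilbert-space identity, evaluation at $z$, and multiplication by $\pi|\bar\l-z|$. The gap is in Step 1, which carries the whole difficulty and fails as you describe it. Step 1 uses only two facts: $e^{itz}FG\in H^2(\C_+)$ with $\|FG\|_{L^2(\R)}\le1$, and $|G(\l)|^2\approx w(s)$. From these, nothing better than the $H^2$ reproducing-kernel bound follows:
$$|F(\l)G(\l)|^2\le \frac{e^{2t\Im\l}}{4\pi\Im\l}=\frac{\S(t,\l,\l)}{1-e^{-4t\Im\l}}.$$
The Poisson kernel loses a further factor $2$, and absorbing a factor $e^{t|\Im\l|}$ destroys the constant altogether. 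The excess factor $(1-e^{-4t\Im\l})^{-1}$ does not depend on $\e_0$. It is of order one at heights $\sim1/t$, and at least $1+e^{-128C}$ everywhere in $Q(s,32C/t)$. Fed into your Step 3, it gives an angle of order $e^{-2t\Im\l}$. The resulting error bound for ${\mathcal D}$ does not improve as $\e_0\to0$; it is of order $1/w$ when $\l$ and $z$ have comparable heights. But (U2) must hold for every $\e$. So the asserted $(1+K\e_0^{1/4}e^{KC})\S/w$ has no mechanism behind it.

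The missing idea is Lemma \ref{lemAUp}: pair $FG$ with the normalized sinc, and use that $F$ has type $t$ in both half-planes. Split $\overline{\S(t,\l,x)}$ into its two exponential parts.
\begin{itemize}
\item The part with $e^{itx}/(x-\l)$ returns exactly $F(\l)G(\l)$, by Cauchy's formula.
\item The part with $e^{-itx}/(x-\l)$ would vanish identically if $G$ were replaced by $\bar G$, since $e^{itz}F^\#G\in H^2$ and $\bar\l\in\C_-$. It therefore equals an integral of $F(G-\bar G)e^{-itx}/(x-\l)$.
\item After rotating to $G(\s)>0$, this integral is small by (M2). Near $\s$ one uses Cauchy--Schwarz, with the local $L^2$ norm of $F$ controlled by (M1) and by the monotonicity of $e^{ty}|F|$ for the zero-free extremal (Lemma \ref{lemAMon}). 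The tail is trivial.
\end{itemize}
This yields $w(\s)K(t,\l,\l)\le(1+5\e)\S(t,\l,\l)$ with $\e$ independent of the height.

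Two further points. First, in Step 2, Poisson domination of $|F|^2$ costs a factor that does not tend to $1$ with $\e_0$ (it is $8$ in Lemma \ref{lemAPS}). So (M3) alone gives only $\|F\|^2_\mu\le Kw(s)\|F\|_2^2$. The sharp bound \eqref{eqAsharp} requires splitting $\mu=w(s)\,dm+(w-w(s))\,dm+\mu_{\rm sing}$. The first term is kept exactly. Only the remainders are dominated: the middle one by (M2) and Cauchy--Schwarz (hence $\sqrt{\e_0}$), the singular one by (M3) minus the Poisson average of $w$. No localizing factor is needed. With the sharp upper bound and \eqref{eqAsharp}, your Step 3 is just the expansion
$$\|K(t,\l,\cdot)-\S(t,\l,\cdot)/w\|^2_\mu=K(t,\l,\l)-2\S(t,\l,\l)/w+\|\S(t,\l,\cdot)\|^2_\mu/w^2,$$
so no lower bound for $K$ is needed.

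Second, the passage to near-real points cannot be a plain maximum principle. Every contour around them crosses $\R$, and below height $1/t$ you only have a bound with an absolute loss. The paper handles the interior as follows.
\begin{itemize}
\item It proves the $L^2(\mu)$ closeness only for $\l\in\pa Q(s,C_t/t)$, whose sides lie over the good shifts of Lemma \ref{lemAShift}.
\item It converts this into a sup bound over the whole box by $H^2$ evaluation on the top edge plus reflection and monotonicity. This reaches near-real $z$ without any maximum principle.
\item It reaches interior $\l$ through $|\DD_\l(z)|=|\DD_z(\l)|$ and a Cauchy integral.
\end{itemize}
Even there, the parts of the sides below height $1/t$ get only the absolute-loss bound of Lemma \ref{lemAL2}, so an extra step is needed in either approach. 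A two-constants estimate works: the harmonic measure of those pieces, seen from $Q(s,16C/t)$, is $O(1/C)$, so they can be absorbed at the cost of a power of the accuracy. Your transfer would need the same device.
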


The almost-everywhere-eventually property claimed in Section \ref{secU} now takes the following classical form.

\begin{proposition}\label{propAae}
For almost every $s\in\R$,
$$\MM_h\log w\,(s)\to0,\qquad \MM^2_hG(s)\to0,\qquad M_h\mu(s)\to w(s)\qquad\text{ as }h\to0^+,$$
and the same holds for the Dirichlet data. Consequently, for every fixed $C\geq1$ and $\e_0>0$, conditions {\rm(M1)--(M3)} hold at $(s,t)$, for almost every $s$ with $w(s)\ti w(s)\neq0$, for all sufficiently large $t$.
\end{proposition}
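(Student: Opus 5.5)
The proof will handle the three limits separately and then combine them, using only classical differentiation theory.

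For the third limit, take $s$ to be a Lebesgue point of $\mu$ with respect to Lebesgue measure, meaning that $\mu(I)/|I|\to w(s)$ for intervals $I$ shrinking to $s$, and that the singular part satisfies $\mu_s(I)/|I|\to0$. Such points form a set of full measure (Lebesgue--Besicovitch differentiation). The Poisson extension $P\mu(z)$ is an average of interval averages. So we split $\mu$ into two pieces: the part near $s$, on an interval of length $\sim R\,\Im z$, and the far part. For $z\in\G_\s$ with $\Im z\to0$, the near part contributes $w(s)+o(1)$. The far part contributes $O(\Im z)\int d\mu/(x-s)^2$ restricted away from $s$, and this tends to $0$ because $\mu$ is Poisson-finite. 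Since $P\mu\ge0$, this gives $M_h\mu(s)\to w(s)$; in fact the nontangential limit equals $w(s)$, and the supremum over the truncated cone converges.

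The first two limits are the standard "Lebesgue point in $L^1(\Pi)$ / $L^2(\Pi)$" statements. Fix a countable dense set of rationals $q$. For a.e. $s$ the following hold:
\begin{itemize}
\item the Poisson extension of $|\log w-q|\,dm$ tends nontangentially to $|\log w(s)-q|$ for all $q$ simultaneously, by the previous paragraph applied to the Poisson-finite absolutely continuous measures $|\log w-q|dm$ (recall $\log w\in L^1(\Pi)$ by Szeg\H{o});
\item $\log w(s)$ is finite.
\end{itemize}
Then $P(|\log w-\log w(s)|)(z)\le P(|\log w-q|)(z)+|q-\log w(s)|$, so $\limsup\MM_h\log w(s)\le2|q-\log w(s)|$, which can be made arbitrarily small. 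The same argument with $|G-q|^2$ works, using $q$ ranging over $\mathbb Q+i\mathbb Q$ and $G\in L^2(\Pi)$. It needs the elementary inequality $|G-G(s)|^2\le2|G-q|^2+2|q-G(s)|^2$, which produces a factor $2$ that is harmless. Here $G(s)$ denotes the boundary value, defined a.e. The Dirichlet data are handled identically. A countable intersection of full-measure sets has full measure.

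For the consequence, fix $C$ and $\e_0$, and take $s$ in the full-measure set above with $w(s)\ti w(s)\neq0$. As $t\to\infty$ the truncation height $h=50C/t$ tends to $0$, so each maximal quantity converges to its limit: $0$ for (M1) and (M2), and $w(s)$ for (M3). The targets in (M1)--(M3) are strictly positive: $\e_0$, $\e_0w(s)>0$, and $(1+\e_0)w(s)>w(s)$. Hence they hold for all large $t$, and likewise for the Dirichlet data.

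The main obstacle is making sure the truncated cone supremum, and not just radial limits, is controlled. I would handle this by working with the near/far splitting uniformly over $z\in\G_s$ with $\Im z\le h$, where the near piece is bounded by Hardy--Littlewood-type averages over intervals of length comparable to $\Im z$ (comparable because the aperture is fixed). I expect no further difficulty beyond this.
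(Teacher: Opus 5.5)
Your argument is correct in substance and rests on the same classical facts as the paper. The paper's proof simply cites two things: the nontangential differentiation (Fatou) theorem for the Poisson-finite measure $\mu=w\,m+\mu_{\rm sing}$, and the nontangential Lebesgue-point property of $\log w\in L^1(\Pi)$ and $G\in L^2(\Pi)$. The latter is obtained in the textbook way, from density of continuous functions plus the weak-type $(1,1)$ bound for the nontangential maximal function.

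The one real difference is in the first two limits. You bypass the maximal inequality: you apply the Fatou step to the countably many densities $|\log w-q|$ and $|G-q|^2$, then use $\int P_z\,dm=1$ and the triangle inequality. This is equally standard and derives everything from a single differentiation theorem. The paper's route instead gives the Lebesgue-point property of $\MM_h$ and $\MM^2_h$ directly, without countable intersections.

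One step of your sketch of the third limit needs tightening. Take the near interval to have length about $R\,\Im z$. Then the far contribution $\Im z\int_{|u-s|>R\Im z}d\mu(u)/(u-s)^2$ does \emph{not} tend to $0$ by Poisson-finiteness alone: if $\mu=dm$ near $s$, it is about $2/R$ for all small $\Im z$. Poisson-finiteness only disposes of the region $|u-s|\geq\delta$. The annulus $R\,\Im z<|u-s|<\delta$ is bounded, by dyadic summation, by $K/R$ times the centered maximal function of $\mu$ at $s$, which is finite at differentiability points. Likewise the near part equals $w(s)\left(1-O(1/R)\right)+o(1)$, not $w(s)+o(1)$. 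So you must let $\Im z\to0$ first and only then $R\to\infty$.

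A cleaner alternative is to note that $P_z\leq 3P_{s+i\Im z}$ for $z\in\G_s$. Apply this to $|P\mu(z)-w(s)|\leq P[|w-w(s)|](z)+P\mu_{\rm sing}(z)$. This reduces the cone statement to radial limits, taken at Lebesgue points of $w$ and at points where $\mu_{\rm sing}$ has zero symmetric derivative. The consequence for (M1)--(M3) then follows exactly as you wrote.
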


\begin{proof}
The first limit holds at a.e.\ point since $\log w\in L^1(\Pi)$: this is the standard consequence of the weak-type bound for the non-tangential maximal function and the density of continuous functions (\cite{G}, Ch.~I). The second is the same statement for $|G-G(s)|^2$: a.e.\ $s$ is an $L^2(\Pi)$-Lebesgue point of $G$ in the non-tangential sense. The third is the non-tangential differentiation theorem for the Poisson-finite measure $\mu=w\,m+\mu_{\rm sing}$. The last claim follows since $50C/t\to0$ as $t\to\infty$.
\end{proof}

The proof of Theorem \ref{thmAppMain} occupies the rest of the appendix. Constants denoted by $K$ are absolute and may change from line to line; the dependence on $\e$, $C$ and the densities is tracked explicitly. It suffices to consider the Neumann family; the Dirichlet case is identical.

\begin{lemma}[Poisson majorization]\label{lemAPS}
Let $x_0\in\R$, $h\geq1$, $t>0$ and $w_0=x_0+ih/t$. Then
$$\frac{|\S(t,w_0,x)|^2}{\S(t,w_0,w_0)}\ \leq\ 8\,P_{x_0+i(h+1)/t}(x)\qquad\text{ for all }x\in\R .$$
\end{lemma}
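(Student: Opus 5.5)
The plan is to reduce everything to an explicit computation with the sinc kernel. By translation ($x\mapsto x-x_0$) we may take $x_0=0$, and by the scaling $x\mapsto tx$ we may take $t=1$. Indeed, $\S(t,w_0,x)=\frac1\pi\frac{\sin t(x-\bar w_0)}{x-\bar w_0}$, so $|\S(t,w_0,x)|^2/\S(t,w_0,w_0)$ rescales exactly like a Poisson kernel $P_{i b/t}(x)$: both sides pick up the same factor $t$. So it suffices to prove
$$\frac{|\sin(x+ih)|^2}{\pi^2(x^2+h^2)}\cdot\frac{2\pi h}{\sinh 2h}\ \leq\ \frac{8}{\pi}\,\frac{h+1}{x^2+(h+1)^2},\qquad x\in\R,\ h\geq1.$$

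For the numerator we use $|\sin(x+ih)|^2=\sin^2x+\sinh^2h\leq\cosh^2h$. Since $h\geq1$, we have $\sinh 2h=2\sinh h\cosh h$ and $\cosh h/\sinh h=\coth h\leq\coth1<1.32$. Together these give
$$\frac{2h\cosh^2 h}{\sinh 2h}=h\coth h\leq 1.32\,h,$$
so the left-hand side is at most $\frac{1.32\,h}{\pi(x^2+h^2)}$. It then remains to check the elementary inequality
$$\frac{1.32\,h}{x^2+h^2}\leq \frac{8(h+1)}{x^2+(h+1)^2}.$$
Cross-multiplying, it suffices that $1.32h\,(x^2+(h+1)^2)\leq 8(h+1)(x^2+h^2)$. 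For the $x^2$ terms this holds because $1.32h\leq 8(h+1)$. For the constant terms it holds because $1.32h(h+1)^2\leq 8(h+1)h^2$, i.e. $1.32(h+1)\leq 8h$, which is true for $h\geq1$.

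No step is a real obstacle. The only care needed is the bookkeeping of the normalizations: the $1/\pi$ in $\S$, the identity $\S(t,w_0,w_0)=\sinh(2h)/(2\pi h/t)$, and $P_z=\frac1\pi\frac{\Im z}{|x-z|^2}$. The role of $h\geq1$ is exactly to make $h\coth h\lesssim h$, so that the bound is uniform in $h$ and only the shift $h\mapsto h+1$ is needed to absorb constants. The constant $8$ is generous.
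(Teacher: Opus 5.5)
Your proof is correct and follows the paper's argument: bound $|\sin(x+ih)|^2\le\cosh^2h$, reduce the prefactor to $h\coth h$, and use $h\ge1$ to compare the two Lorentzian denominators. The differences are cosmetic: you normalize to $x_0=0$, $t=1$, use $h\coth h\le 1.32\,h$ and cross-multiply term by term, whereas the paper uses $h\coth h\le h+1$ together with $(h+1)^2\le 4h^2$.
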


\begin{proof}
Since $|\sin[t(x-\bar w_0)]|^2=|\sin[t(x-x_0)+ih]|^2\leq\cosh^2h$ and $\S(t,w_0,w_0)=\frac{t\sinh2h}{2\pi h}$,
$$\frac{|\S(t,w_0,x)|^2}{\S(t,w_0,w_0)}\ \leq\ \frac{2h\cosh^2h}{\pi t\,\sinh 2h}\cdot\frac1{(x-x_0)^2+(h/t)^2}\ =\ \frac{h\coth h}{\pi t\left[(x-x_0)^2+(h/t)^2\right]} .$$
Now $h\coth h\leq h+1$ and $(x-x_0)^2+((h+1)/t)^2\leq4\left[(x-x_0)^2+(h/t)^2\right]$ (because $h+1\leq2h$), so
$$\frac{h+1}{\pi t\left[(x-x_0)^2+(h/t)^2\right]}\ \leq\ \frac{4(h+1)/t}{\pi \left[(x-x_0)^2+\frac{(h+1)^2}{t^2}\right]}\ \leq\ 8\,P_{x_0+i(h+1)/t}(x). \qedhere$$
\end{proof}

\begin{lemma}[monotonicity and reflection]\label{lemAMon}
1) If $F\in\PW_t$ has no zeros in $\C_+$, then $e^{ty}|F(x+iy)|$ is non-decreasing in $y\geq0$ for every fixed $x\in\R$.

2) For any $F\in\PW_t$, the reflected function
$$F'(z)=F(z)\prod_{a_j\in\C_+}\frac{z-\bar a_j}{z-a_j},$$
the product taken over the zeros of $F$ in $\C_+$, is entire, belongs to $\PW_t$, has no zeros in $\C_+$, and satisfies $|F'|=|F|$ on $\R$ and $|F'|\geq|F|$ in $\C_+$.

3) Let $\nu$ be a positive Poisson-finite measure, $\l\in\C_+$, and let $F\in\PW_t$, $\|F\|_\nu\leq1$, maximize $|F(\l)|$ among such functions. Then $F$ has no zeros in $\C_+$, and $e^{ty}|F(x+iy)|$ is non-decreasing in $y\geq0$ for every $x$.
\end{lemma}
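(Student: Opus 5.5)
The plan is to treat the three parts in order, each building on the previous one.

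For 1), let $F\in\PW_t$ have no zeros in $\C_+$. I will use the standard factorization of Paley--Wiener functions in the upper half-plane. The function $e^{itz}F(z)$ lies in $H^2(\C_+)$ and has no zeros there, so it has no Blaschke factor. By Krein's theorem, $F$ is of bounded type in $\C_+$ with mean type at most $t$, and the inner part of $e^{itz}F$ can only be a singular inner factor. Since $F$ is entire, there is no singular measure on $\R$. The only possible singular contribution sits at infinity, namely $e^{i\sigma z}$ with $0\le\sigma\le 2t$. This gives
\[
e^{itz}F(z)=e^{i\sigma z}\,O(z),
\]
with $O$ outer and $O=\exp\big(\frac{1}{\pi i}\int(\frac{1}{x-z}-\frac{x}{1+x^2})\log|F(x)|\,dx\big)$ up to a unimodular constant. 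Writing $z=x+iy$, we get
\[
\log\big(e^{ty}|F(x+iy)|\big)=-\sigma y+P\log|F|(x+iy)+2ty-ty .
\]
This needs care, so I will redo the computation. We have $|e^{itz}F|=e^{-ty}|F|=e^{-\sigma y}|O|$, hence $e^{ty}|F|=e^{(2t-\sigma)y}|O|$. Here $\log|O(x+iy)|=P\log|F|(x+iy)$. The cleaner route is to apply the same factorization to $F^\#$: the mean type of $F$ in $\C_+$ equals the exponential type on the imaginary axis, so $\sigma=2t-\tau_+$ with $\tau_+\le t$. The monotonicity of $e^{ty}|F(x+iy)|$ then reduces to showing that
\[
(2t-\sigma)y+P\log|F|(x+iy)
\]
is non-decreasing in $y$.

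This is where I expect the main obstacle: the Poisson integral of $\log|F|$ is not monotone in $y$ in general. So I would instead argue via the Hermite--Biehler/de Branges route. A zero-free $F\in\PW_t$ in $\C_+$ satisfies $|F^\#(z)|\le|e^{2itz}|^{-1}\cdots$. More simply, I would consider $G(z)=e^{-itz}F(z)$, which is zero-free and of exponential type. The function $\log|G|$ is then harmonic in $\C_+$ and, by the factorization, equals a Poisson integral plus $\tau y$ with $\tau\ge 0$. Monotonicity in $y$ follows once $\partial_y\log|G|\ge 0$. To obtain that, I would use the known fact that $z\mapsto F(z)/F^\#(z)$ is... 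Failing a slick argument, I would fall back on the product representation. A zero-free-in-$\C_+$ entire function of exponential type with $\int\log^+|F|/(1+x^2)<\infty$ has the Hadamard--Krein form $F(z)=ce^{-i\tau z}\prod(1-z/\bar a_k)$-type products with all zeros $\bar a_k$ in $\overline{\C_-}$. Each factor $|x+iy-\zeta|$, with $\Im\zeta\le 0$, is non-decreasing in $y\ge0$. The exponential factor contributes $e^{\tau' y}$ with $\tau'\ge -t$, by the type bound in $\C_-$. Multiplying by $e^{ty}$ makes the total exponent non-negative. Convergence of the product uses real parts of $1/\zeta_k$ via Lindelöf, and I would quote this from the Cartwright-class theory (Levin).

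For 2), let $F'$ be the reflected function defined in the statement. The Blaschke product $\prod\frac{z-\bar a_j}{z-a_j}$ converges because $F$ has bounded type in $\C_+$, and its only poles $a_j$ cancel against zeros of $F$, so $F'$ is entire. On $\R$ each factor is unimodular, so $|F'|=|F|$ there. In $\C_+$ each factor has modulus at least $1$, so $|F'|\ge|F|$. To show $F'\in\PW_t$, I would note that $e^{itz}F'$ equals $e^{itz}F$ divided by a Blaschke product in $\C_+$, and that the resulting function remains in $H^2$, by the inner--outer factorization and equality of boundary moduli. In $\C_-$, $F'$ is $F$ times a bounded product, since $|z-\bar a_j|\le|z-a_j|$ for $z\in\C_-$. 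Hence $e^{-itz}F'$ is bounded by $e^{-itz}F$ in modulus and lies in $H^2(\C_-)$. Both conditions together characterize $\PW_t$. The new zeros $\bar a_j$ lie in $\C_-$, so $F'$ has no zeros in $\C_+$.

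For 3), take the extremal $F$. If $F$ had a zero in $\C_+$, then the reflected $F'$ from 2) satisfies $\|F'\|_\nu=\|F\|_\nu\le1$, since the moduli agree on $\R\supset\operatorname{supp}\nu$. At the same time $|F'(\l)|>|F(\l)|$ strictly, because each nontrivial Blaschke factor has modulus $>1$ at every point of $\C_+$. This contradicts extremality; the case $F(\l)=0$ is trivial, as the maximum is positive. Then 1) applies and gives the monotonicity.
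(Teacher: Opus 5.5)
Your proposal is correct, and once the abandoned Poisson-integral and de Branges detours in part 1) are removed it is the paper's proof. (In that detour, incidentally, the mean type of $F$ in $\C_+$ is $t-\sigma$, so $\sigma=t-\tau_+$, not $2t-\tau_+$.) The shared argument runs as follows. The Cartwright product, with all zeros in $\overline{\C_-}$, is non-decreasing in $y$ factor by factor. The exponential factor is controlled by the type condition: the paper gets $k\le t$ from $e^{itz}F^\#\in H^2(\C_+)$, and you get $\tau'\ge -t$ from the type bound in $\C_-$. Both tacitly use that the principal-value product has the same non-negative type in both half-planes. Part 2) is the same Blaschke reflection, and part 3) is the same contradiction with extremality. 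The only difference is that in part 3) you apply part 1) directly, whereas the paper first checks that $k\le 0$ for the maximizer; that extra step is not needed, since part 1) already covers every zero-free $F\in\PW_t$.
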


\begin{proof}
1) Write the Cartwright representation $F=d z^ne^{ikz}\lim_R\prod_{|a_j|<R}(1-z/a_j)=dz^ne^{ikz}G_0(z)$ (see \cite{dBr}). All $a_j$ lie in $\bar\C_-$, so each factor $|1-z/a_j|$, as well as $|z|^n$, is a non-decreasing function of $y=\Im z$ on $y\geq0$ for fixed $x$; hence so is $|z^nG_0(x+iy)|$. It remains to check $k\leq t$. In $\C_+$, $G_0=Oe^{ilz}$ with $O$ outer, and $l\leq0$ since $|G_0(iy)|$ is non-decreasing while $\log|O(iy)|=o(y)$ (as $\log|F|\in L^1(\Pi)$ for Cartwright functions). Since $G_0$ has no zeros in $\C_+$, $G_0^\#=OBe^{ilz}$ in $\C_+$ for a Blaschke product $B$, and $e^{itz}F^\#=\bar d z^nOBe^{i(t-k+l)z}\in H^2(\C_+)$ forces $t-k+l\geq0$, i.e.\ $k\leq t+l\leq t$. Therefore $e^{ty}|F(x+iy)|=|d|\,e^{(t-k)y}\,|z^nG_0(x+iy)|$ is non-decreasing in $y$.

2) The zeros of $F$ in $\C_+$ satisfy the Blaschke condition (Cartwright class), so the product converges; the poles of the product cancel the zeros of $F$, making $F'$ entire, with $|F'|=|F|$ on $\R$ (hence $F'\in L^2(\R)$), $|F'|\geq|F|$ in $\C_+$ (each factor has modulus $\geq1$ there), and the exponential type is unchanged since the product is bounded in each half-plane away from the zeros. Thus $F'\in\PW_t$, and by construction it has no zeros in $\C_+$.

3) If $F$ had a zero $a\in\C_+$, then $F_1=F\cdot\frac{z-\bar a}{z-a}\in\PW_t$ satisfies $|F_1|=|F|$ on $\R$ and $|F_1(\l)|>|F(\l)|$, contradicting maximality; so $F$ is zero-free in $\C_+$. Moreover, if $k>0$ in the representation of part 1), then $F_2=e^{-ikz}F$ satisfies $F_2\in\PW_t$ (by the computation of part 1), using $t+l\geq k>0$), $|F_2|=|F|$ on $\R$ and $|F_2(\l)|=e^{k\Im\l}|F(\l)|>|F(\l)|$; hence $k\leq0\leq t$ and part 1) applies.
\end{proof}

\begin{lemma}[norm upper bound at cone points]\label{lemAUp}
There is an absolute constant $K_{16}$ with the following property. Let $\e\in(0,\tfrac14]$, $\s\in\R$, and $\l=x_0+iy/t$ with $y\geq1$ and $|x_0-\s|\leq y/t$. Suppose that
\begin{equation}
\MM_{(y+1)/t}\log w\,(\s)\leq\frac{\e^4}{K_{16}(1+y)}
\qquad\text{ and }\qquad
\MM^2_{y/t}G(\s)\leq\frac{\e^2}{K_{16}}\,w(\s).
\label{eqAupHyp}
\end{equation}
Then
$$w(\s)\,K(t,\l,\l)\ \leq\ (1+5\e)\,\S(t,\l,\l) .$$
\end{lemma}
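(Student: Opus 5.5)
The plan is to work with an extremal function and transfer the Paley–Wiener bound $|H(\l)|^2\leq\S(t,\l,\l)\|H\|_2^2$ from unweighted $L^2$ to $L^2(\mu)$. The outer function $G$ will serve as a local substitute for the constant $\sqrt{w(\s)}$. By the definition of $K$, $K(t,\l,\l)=|F(\l)|^2$ for an $F\in\PW_t$ with $\|F\|_\mu\leq1$ maximizing $|F(\l)|$. By Lemma \ref{lemAMon}, 3) this $F$ has no zeros in $\C_+$, and $e^{t\Im z}|F(z)|$ is non-decreasing upward. Since $G(\s)$ is a constant and $F\,G(\s)\in\PW_t$, the Paley–Wiener bound gives
$$w(\s)K(t,\l,\l)=|F(\l)G(\s)|^2\leq\S(t,\l,\l)\,\|F\,G(\s)\|_2^2 .$$
We split $\|FG(\s)\|_2\leq\|FG\|_2+\|F(G-G(\s))\|_2$. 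Here $\|FG\|_2^2=\int|F|^2w\,dx\leq\|F\|^2_\mu\leq1$, because $\mu\geq w\,dx$. So it suffices to prove
$$\int_\R|F|^2\,|G-G(\s)|^2\,dx\ \leq\ 4\e^2 .$$
This would give $\|FG(\s)\|_2^2\leq(1+2\e)^2\leq1+5\e$ for $\e\leq\tfrac14$.

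To bound this error integral, we will dominate $|F|^2$ on $\R$ by a multiple of a Poisson kernel at height comparable to $y/t$ centered near $x_0$. The Poisson integral of $|G-G(\s)|^2$ at such a point lies in the cone $\G_\s$ below height of order $y/t$, so $\MM^2_{y/t}G(\s)\leq\e^2w(\s)/K_{16}$ would finish the proof. (Possibly \eqref{eqAker} is needed to move the evaluation point to the vertical ray over $\s$, at the cost of an absolute factor.) The target pointwise bound is
$$|F(x)|^2\ \leq\ \frac{K}{w(\s)}\,P_{x_0+i(y+1)/t}(x),\qquad x\in\R .$$
To get it, we first bound $|F|$ along the horizontal line through $\l$ and then push down to $\R$ via the monotonicity of $e^{t\Im z}|F|$ from Lemma \ref{lemAMon}. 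On the line $\Im z=(y+1)/t$, the function $\log|F|$ is controlled by the Poisson extension of its boundary values: $F$ is zero-free in $\C_+$, so $\log|F|$ is harmonic there up to the linear term $-k\Im z$.

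The main obstacle is bounding $\log|F|$ from above on that line. Condition (M1) at height $(y+1)/t$, with the strong smallness $\e^4/(K_{16}(1+y))$, should let us replace the weight $w$ by the constant $w(\s)$ inside a Jensen-type estimate. Applying Jensen's inequality to the Poisson extension of $\log(|F|^2w)$ gives
$$\log\big(|F(z)|^2|G(z)|^2\big)\leq\log P\big(|F|^2w\big)(z)+\text{(linear term)},$$
and $\log|G(z)|^2=P\log w(z)=\log w(\s)\pm\e^4/K_{16}$ by (M1). Hence
$$|F(z)|^2\lesssim\frac{e^{2(y+1)}}{w(\s)}\,P\big(|F|^2w\big)(z)\lesssim\frac{e^{2y}}{w(\s)}\cdot\frac{t}{y}$$
on that line. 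Lemma \ref{lemAPS} supplies the exact matching: it bounds the sinc-normalized kernel by $8P_{x_0+i(h+1)/t}$, which is precisely the Poisson-kernel shape to be produced. We expect the factor $1+y$ in the (M1) hypothesis to be exactly what absorbs the $e^{2y}$ loss when passing from the line $\Im z=(y+1)/t$ down to $\R$. If the Jensen route loses too much, the fallback is a two-region split. Near the point, $|x-x_0|\leq Ry/t$, we would use the direct sup bound $|F|^2\leq(t/\pi)\|F\|_2^2$ together with the (M1) comparison $\|F\|_2^2\lesssim1/w(\s)$. Far away, we would use the Poisson decay obtained from subharmonicity.
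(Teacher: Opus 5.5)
\textbf{Gap.} Your first step throws away the information the lemma needs. After
$w(\s)K(t,\l,\l)\le\S(t,\l,\l)\,w(\s)\|F\|^2_{L^2(\R)}$, the whole argument rests on $w(\s)\|F\|^2_{L^2(\R)}\le 1+5\e$. That is a bound on the \emph{unweighted} global norm of the extremal function, and your pointwise bound $|F|^2\le K\,P_{x_0+i(y+1)/t}/w(\s)$ on all of $\R$ would be one way to get it.

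The only global information you have is $\|F\|_\mu\le1$. This constraint costs almost nothing on far sets where $w\ll w(\s)$, and \eqref{eqAupHyp} sees such sets only through a small Poisson weight. In fact the claim is false under the hypotheses of the lemma, and nothing in the lemma uses more about $\mu$ than $\mu\ge w\,dx$ on $\PW_t$.

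\emph{Counterexample.} Take $w(\s)=1$ and $\mu=w\,dx$, with $w=1$ except $w=\eta$ on an interval $I$ of length $\ell$ at distance $L$ from $\s$.
\begin{itemize}
\item On $\PW_t$ one has $\|F\|_\mu^2=\langle(1-(1-\eta)T_I)F,F\rangle_{L^2}$, where $T_I$ is the time--band limiting operator. Hence $K(t,\l,\cdot)=(1-(1-\eta)T_I)^{-1}\S(t,\l,\cdot)$.
\item Let $\psi_0$ be the top prolate spheroidal function of $I$, with eigenvalue $1-\delta_0$, where $\delta_0\approx e^{-t\ell}$ up to polynomial factors. Pairing $F$ with $\psi_0$ gives
$$\|F\|_{L^2(\R)}^2\ \geq\ \frac{|\psi_0(\l)|^2}{(\delta_0+\eta)^2\,K(t,\l,\l)},\qquad |\psi_0(\l)|^2\approx\frac{\delta_0\,e^{2y}\,\ell}{L^2}.$$
\item Take $\eta=e^{-2t\ell}$. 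Both conditions in \eqref{eqAupHyp} hold once $tL\ge c(y,\e)\,t\ell$ for a suitable $c(y,\e)$, since only polynomial separation is needed.
\item Then $\|F\|^2_{L^2(\R)}\gtrsim y\,t\ell/\big((tL)^2\delta_0\big)\to\infty$ as $t\ell\to\infty$, while the conclusion of the lemma still holds. Since $\int_{\R\setminus I}|F|^2\le1$, your error integral $\int|F|^2|G-G(\s)|^2$ blows up with it.
\end{itemize}

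\emph{Your proposed mechanisms fail accordingly.}
\begin{itemize}
\item Jensen's inequality for $e^{itz}FG$ needs a lower bound on $|G(z)|$, which (M1) supplies only over the cone of $\s$.
\item Pushing down from height $(y+1)/t$ by the monotonicity of $e^{t\Im z}|F|$ costs a factor $e^{2(y+1)}$ in $|F|^2$. The polynomial factor $1+y$ in \eqref{eqAupHyp} cannot absorb an exponential loss.
\item The fallback invokes $\|F\|_2^2\lesssim 1/w(\s)$, which is exactly the claim at issue.
\end{itemize}

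\textbf{Missing idea.} Keep $G$ attached to $F$: $\|FG\|_{L^2(\R)}\le\|F\|_\mu\le1$ is the only usable global bound.
\begin{itemize}
\item The paper tests $FG$ against the normalized sinc $\SS$, so $|\int FG\,\overline{\SS}\,dx|\le 1$.
\item Cauchy's formula in $\C_+$ identifies the main part as $\S(t,\l,\l)^{-1/2}F(\l)G(\l)$.
\item For the remainder, normalize $G(\s)>0$ and subtract the analogous integral with $\overline G$ in place of $G$. That integral vanishes because $e^{itz}F^\#G$ is of Smirnov class and $\bar\l\in\C_-$. The remainder becomes an integral of $F\,(G-\overline G)$ against $e^{-itx}/(x-\l)$.
\item Far from $\s$, this is controlled by $\|F(G-\overline G)\|_2\le2$ alone.
\item Near $\s$, it needs only $\MM^2_{y/t}G(\s)$ and the \emph{local} norm of $F$ on a window of length $\sim Dy/t$.
\item That local norm comes from $\|(FG)(\cdot+i/t)\|_2\le e$, from monotonicity at height $1/t$ (an absolute loss $e$), and from a lower bound for $|G(\cdot+i/t)|$ on the window via \eqref{eqAker}. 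The resulting polynomial loss $D^2(1+y)$ is precisely what the factor $\e^4/(1+y)$ in (M1) is calibrated to absorb.
\item Finally, (M1) gives $|G(\l)|\ge(1-\e)\sqrt{w(\s)}$.
\end{itemize}
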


\begin{proof}
After a translation, $\s=0$; multiplying $G$ by a unimodular constant we may take $G(0)=\sqrt{w(0)}>0$. Let $F\in\PW_t$, $\|F\|_\mu\leq1$, maximize $|F(\l)|$, so that $K(t,\l,\l)=|F(\l)|^2$; by Lemma \ref{lemAMon}, $F$ has no zeros in $\C_+$ and $e^{ty'}|F(x+iy')|$ is non-decreasing in $y'$. Put
$$\SS(z)=\frac{\S(t,\l,z)}{\sqrt{\S(t,\l,\l)}},\qquad \|\SS\|_{L^2(\R)}=1 .$$
Since $\mu\geq w\,dm$ as measures, $\|FG\|_{L^2(\R)}\leq\|F\|_\mu\leq1$, and therefore
\begin{equation}
\left|\int_\R F(x)G(x)\overline{\SS(x)}\,dx\right|\ \leq\ 1 .
\label{eqAtest}
\end{equation}
For real $x$,
$$\overline{\SS(x)}=\frac1{\sqrt{\S(t,\l,\l)}}\cdot\frac{e^{it(x-\l)}-e^{-it(x-\l)}}{2\pi i(x-\l)},$$
so the integral in \eqref{eqAtest} equals $\S(t,\l,\l)^{-1/2}(I-II)$ with
$$I=\frac{e^{-it\l}}{2\pi i}\int_\R F(x)G(x)\,\frac{e^{itx}}{x-\l}\,dx=F(\l)G(\l),$$
by Cauchy's formula, since $e^{itz}FG\in H^2\cdot N^+$ with square-summable boundary values, hence in $H^1$-Smirnov sense integrable, and $\l\in\C_+$; and
$$II=\frac{e^{it\l}}{2\pi i}\int_\R F(x)G(x)\,\frac{e^{-itx}}{x-\l}\,dx .$$
To estimate $II$, note that
$$\begin{gathered}III:=\frac{e^{it\l}}{2\pi i}\int_\R F(x)\overline{G(x)}\,\frac{e^{-itx}}{x-\l}\,dx=\\
=\overline{\ \frac{e^{-it\bar\l}}{-2\pi i}\int_\R \overline{F(x)}\,G(x)\,\frac{e^{itx}}{x-\bar\l}\,dx\ }=0,\end{gathered}$$
because $e^{itz}F^\#G$ is in the Smirnov class of $\C_+$ with square-summable boundary values and $\bar\l\in\C_-$. Hence, for any $D\geq1$,
$$\begin{gathered}|II|=|II-III|\leq
\underbrace{\left|\int_{|x|\leq Dy/t}\!\!FG_0'\,\frac{e^{-itx}\,dx}{2\pi(x-\l)}\right|}_{II_a}
+\underbrace{\left|\int_{|x|>Dy/t}\!\!FG_0'\,\frac{e^{-itx}\,dx}{2\pi(x-\l)}\right|}_{II_b},\\
G_0':=G-\overline G .\end{gathered}$$
Since $G(0)$ is real, $|G_0'|\leq2|G-G(0)|$ on $\R$. For $II_b$, Cauchy--Schwarz with $\|FG_0'\|_2\leq2$ gives
$$II_b\leq\frac1\pi\left(\int_{|x|>Dy/t}\frac{dx}{|x-\l|^2}\right)^{1/2}\leq\frac1\pi\sqrt{\frac{Kt}{Dy}} .$$
For $II_a$, Cauchy--Schwarz gives
$$\begin{gathered}II_a\leq\frac1\pi\,\big\|F\big|_{[-Dy/t,Dy/t]}\big\|_{L^2}\left(\int\frac{4|G-G(0)|^2}{|x-\l|^2}dx\right)^{1/2}
\leq\\ \leq\ K\,\big\|F\big|_{[-Dy/t,Dy/t]}\big\|_{L^2}\sqrt{\frac{t}{y}\,\MM^2_{y/t}G(0)},\end{gathered}$$
where we used $|x-\l|^{-2}=\frac{\pi t}{y}P_\l(x)$ and $\l\in\G_0$ at height $y/t$.

The local norm of $F$ is estimated through the outer function. For $|x'|\leq Dy/t$, by \eqref{eqAker} with $a=1/t$, $b=(y+1)/t$,
$$\begin{gathered}\log|G(x'+i/t)|=\tfrac12P[\log w](x'+i/t)\geq\\
\geq\tfrac12\log w(0)-KD^2(1+y)\,\MM_{(y+1)/t}\log w(0) .\end{gathered}$$
Since $\|e^{itz}FG\|_{H^2}\leq1$, we have $\|(FG)(\cdot+i/t)\|_{L^2}\leq e$, and by the monotonicity of $e^{ty'}|F|$, $|F(x)|\leq e\,|F(x+i/t)|$ pointwise; hence
$$\big\|F\big|_{[-Dy/t,Dy/t]}\big\|_{L^2}\ \leq\ \frac{K}{\sqrt{w(0)}}\,\exp\left[KD^2(1+y)\,\MM_{(y+1)/t}\log w(0)\right] .$$
Combining, and writing $\S=\S(t,\l,\l)=\frac{t\sinh2y}{2\pi y}$,
$$\begin{gathered}\S^{-1/2}\,|F(\l)|\,|G(\l)|\ \leq\ 1+\S^{-1/2}(II_a+II_b)
\ \leq\\ \leq\ 1+K\sqrt{\frac1{D\sinh2y}}+K\,e^{KD^2(1+y)\MM}\sqrt{\frac{\MM^2_{y/t}G(0)}{w(0)\sinh2y}} .\end{gathered}$$
Choose $D=K'/\e^2$ with $K'$ absolute so large that the middle term is at most $\e$ (recall $y\geq1$). Then $KD^2(1+y)\MM_{(y+1)/t}\log w(0)\leq K K'^2\e^{-4}(1+y)\cdot\frac{\e^4}{K_{16}(1+y)}\leq\log2$ for $K_{16}$ large, and by \eqref{eqAupHyp} the last term is at most $2K\sqrt{\e^2/(K_{16}\sinh2)}\leq\e$ for $K_{16}$ large. Finally, $\l\in\G_0$ at height $y/t\leq(y+1)/t$ gives $|G(\l)|\geq\sqrt{w(0)}\,e^{-\MM_{(y+1)/t}\log w(0)}\geq(1-\e)\sqrt{w(0)}$. Altogether $\sqrt{w(0)}|F(\l)|\leq\frac{1+2\e}{1-\e}\,\S^{1/2}$, and squaring gives the claim.
\end{proof}

\begin{lemma}[good shifts]\label{lemAShift}
There is an absolute constant $K_{17}$ such that for every $b>0$ there exists a set $S\subset[2b,3b]$, $|S|\geq b/10$, with the following property: for every $C'\in S$ and $\s=s\pm C'$,
$$\begin{gathered}\MM_b\log w\,(\s)\leq K_{17}\MM_{5b}\log w\,(s),\qquad
\MM^2_bG(\s)\leq K_{17}\MM^2_{5b}G(s),\\
M_b\mu(\s)\leq K_{17}M_{5b}\mu(s),\end{gathered}$$
and moreover $|\log w(\s)-\log w(s)|\leq K_{17}\MM_{5b}\log w(s)$.
\end{lemma}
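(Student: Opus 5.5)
The plan is to reduce all four bounds to pointwise estimates by the Hardy--Littlewood maximal function of suitable local measures, and then to choose the shifts by averaging over $C'\in[2b,3b]$ and applying Chebyshev's inequality. Let $I=[s-4b,s+4b]$. Every point $\s=s\pm C'$ with $C'\in[2b,3b]$ lies in $I$, and every cone point $z\in\G_\s$ with $\Im z\leq b$ satisfies $|\Re z-s|\leq 4b$.

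\emph{Splitting the Poisson integral.} For a positive measure $\nu$ and such $z$, we write $P\nu(z)=P(\nu|_{\tilde I})(z)+P(\nu|_{\R\setminus\tilde I})(z)$ with $\tilde I=[s-8b,s+8b]$.

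The far part is handled by \eqref{eqAker}, applied with the reference point $ib'$ above $s$ at height $b'=5b$ (after translating so that $s=0$). For $x\notin\tilde I$ and $|\Re z|\leq4b$ we have $P_z(x)\leq K\,P_{s+5bi}(x)$, now also for $\Im z$ small, because $|x-\Re z|\geq|x-s|/2$. Hence the far part is at most $K\,P\nu(s+5bi)\leq K\,M_{5b}\nu(s)$.

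The near part is bounded by $K\,\mathcal{M}(\nu|_{\tilde I})(\s)$, with $\mathcal{M}$ the Hardy--Littlewood maximal function, since the truncated non-tangential maximal function is dominated by it.

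\emph{Averaging.} By the weak $(1,1)$ inequality, $|\{\s:\mathcal{M}(\nu|_{\tilde I})(\s)>\Lambda\}|\leq K\nu(\tilde I)/\Lambda$. Moreover $\nu(\tilde I)\leq K b\,P\nu(s+5bi)\leq Kb\,M_{5b}\nu(s)$, because $P_{s+5bi}\gtrsim 1/b$ on $\tilde I$. Choosing $\Lambda=K_{17}M_{5b}\nu(s)$ with $K_{17}$ large makes the exceptional set of measure at most $b/100$ for each of the three measures:
\begin{itemize}
\item $\nu=|\log w-\log w(s)|\,dm$,
\item $\nu=|G-G(s)|^2dm$,
\item $\nu=\mu$.
\end{itemize}

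\emph{Recentering.} The quantities at $\s$ are centered at $g(\s)$, not at $g(s)$. We use $|g-g(\s)|\leq|g-g(s)|+|g(s)-g(\s)|$ and, for $G$, $|G-G(\s)|^2\leq2|G-G(s)|^2+2|G(s)-G(\s)|^2$. The constant terms contribute $|g(\s)-g(s)|$, since $P$ of a constant is that constant. So we also need $|\log w(\s)-\log w(s)|$ and $|G(\s)-G(s)|^2$ to be controlled.

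At Lebesgue points, $|g(\s)-g(s)|\leq\mathcal{M}(|g-g(s)|\chi_{\tilde I})(\s)$, which is bounded by the same weak-type argument. For $G$ this gives a bound by $K\,\MM^2_{5b}G(s)$, using $\mathcal{M}(|G-G(s)|^2\chi_{\tilde I})$. This simultaneously yields the last assertion of the lemma.

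\emph{Collecting exceptional sets.} The set $\{\s:|\s-s|\in[2b,3b]\}$ consists of two intervals of length $b$. We require both $s+C'$ and $s-C'$ to be good. Removing at most $4\cdot 2\cdot b/100$ of bad parameters $C'$ (each exceptional set counted for both signs) leaves a set $S\subset[2b,3b]$ with $|S|\geq b-8b/100\geq b/10$.

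\emph{Main obstacle.} The delicate step is the far-part comparison via \eqref{eqAker}: it must be uniform for cone points arbitrarily close to the real axis, which requires exploiting $x\notin\tilde I$. The rest is routine weak-type bookkeeping. Measurability and a.e. issues are handled by restricting to Lebesgue points of $\log w$ and $G$, which is harmless since $S$ need only have positive measure.
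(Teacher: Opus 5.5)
Your proof is correct and follows essentially the same route as the paper. You split the Poisson integral into a far part dominated by $P\nu(s+5bi)\leq M_{5b}\nu(s)$ and a near part controlled by the Hardy--Littlewood maximal function and the weak-type $(1,1)$ inequality, with $\nu(\tilde I)\lesssim b\,M_{5b}\nu(s)$; you then recenter by the triangle inequality and discard the exceptional shifts. Two differences work in your favor:
\begin{itemize}
\item Splitting at $8b$ rather than the paper's $4b$ is what makes $|\cdot-\Re z|\geq\frac12|\cdot-s|$ hold on the far part uniformly as $\Im z\to0$. The paper's version of this claim fails for $4b<|\cdot-s|<8b$, although moving that range into the near part costs nothing.
\item You bound $|g(\s)-g(s)|$ at Lebesgue points by the same maximal function, in place of the paper's separate Chebyshev argument on the annular average.
\end{itemize}
Your appeal to \eqref{eqAker} is superfluous: its constant blows up as $a\to0$, and what you actually use is the direct comparison.
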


\begin{proof}
Consider $G$; the other quantities are treated by the same argument. For $x$ with $2b<|x-s|<3b$ and $z\in\G_x$, $\Im z\leq b$,
$$\int P_z\,|G-G(x)|^2dm\ \leq\ 2\int P_z\,|G-G(s)|^2dm+2|G(s)-G(x)|^2 .$$
Split the integral over $\{|\cdot-s|>4b\}$ and $\{|\cdot-s|\leq4b\}$. On the far part, $|\cdot-\Re z|\geq\frac12|\cdot-s|$ and $\Im z\leq b$, so $P_z\leq KP_{s+i5b}$ there and the far part is at most $K\MM^2_{5b}G(s)$. The near part is at most $K$ times the Hardy--Littlewood maximal function at $x$ of $g:=|G-G(s)|^2\charf_{[s-4b,s+4b]}$; since $P_{s+i5b}\geq c/b$ on $[s-4b,s+4b]$, $\|g\|_1\leq Kb\,\MM^2_{5b}G(s)$, and by the weak-type $(1,1)$ inequality $Mg(x)\leq K\MM^2_{5b}G(s)$ outside a set of measure $\leq b/100$. Finally, the average of $|G(x)-G(s)|^2$ over $\{2b<|x-s|<3b\}$ is at most $K\MM^2_{5b}G(s)$ (again since $P_{s+i5b}\geq c/b$ there), so by Chebyshev's inequality $|G(x)-G(s)|^2\leq K\MM^2_{5b}G(s)$ outside a set of measure $\leq b/100$. Combining, the required bound holds for all $x$ outside a set of measure $\leq b/50$ per side. Running the same argument for $|\log w-\log w(s)|$ (which also yields the last displayed claim) and for the measure $\mu\charf_{[s-4b,s+4b]}$ (whose mass is at most $Kb\,M_{5b}\mu(s)$), and removing all exceptional sets, leaves a symmetric set $S$ of admissible shifts with $|S|\geq b/10$.
\end{proof}

We now fix the working scales. Let $C'=16C$ (the box of condition (U2) is $Q(s,C'/t)$) and $b=C'/t$. By Lemma \ref{lemAShift} choose $C_t\in(2C',3C')$ such that $\s_\pm=s\pm C_t/t$ are good shifts; note that all truncations occurring below are at most $50C/t$, so under (M1)--(M3) every maximal quantity at $s$ or at $\s_\pm$ that appears is bounded by $K_{17}\e_0$-multiples of the respective (M)-bounds.

\begin{lemma}[boundary bounds]\label{lemAL2}
There is an absolute constant $K_{18}$ such that, under {\rm(M1)--(M3)} with $\e_0\leq\e^4/\left(K_{18}(1+C)\right)$, for all $z\in\pa Q(s,C_t/t)$:
$$w(s)\,K(t,z,z)\ \leq\ (1+K\e)\,\S(1,iC_t,iC_t)\,t,$$
and
$$\left\|K(t,z,\cdot)-\frac1{w(s)}\S(t,z,\cdot)\right\|^2_\mu\ \leq\ \frac{K\left(\e+\sqrt{\e_0}\right)}{w(s)}\,\S(1,iC_t,iC_t)\,t .$$
\end{lemma}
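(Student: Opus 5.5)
The plan has two parts, matching the two claims of Lemma \ref{lemAL2}.

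\textbf{Norm bound.} I would cover $\pa Q(s,C_t/t)$ by points $z=x_0+iy/t$ and treat three groups separately.

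On the top and bottom sides ($|y|=C_t$) the point $z$ or $\bar z$ lies in the cone $\G_s$ at height $C_t/t\leq 3C'/t<50C/t$. Since $K(t,\bar z,\bar z)=K(t,z,z)$ (the space is conjugation-invariant), we may take $y>0$. Lemma \ref{lemAUp} then applies with $\s=s$, provided its hypothesis \eqref{eqAupHyp} follows from (M1), (M2) with $\e_0\leq\e^4/(K_{18}(1+C))$. This holds because $1+y\leq K(1+C)$ and $\e^4\leq\e^2$. It gives $wK\leq(1+5\e)\S(t,z,z)$, and $\S(t,z,z)\leq\S(1,iC_t,iC_t)t$ since $|y|\leq C_t$.

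On the vertical sides, $\Re z=\s_\pm$ is a good shift by Lemma \ref{lemAShift}. For points with $|y|\geq1$, $z$ lies in $\G_{\s_\pm}$, and Lemma \ref{lemAUp} at $\s=\s_\pm$ gives $w(\s_\pm)K\leq(1+5\e)\S(t,z,z)$. I then convert $w(\s_\pm)$ to $w(s)$ using $|\log w(\s_\pm)-\log w(s)|\leq K_{17}\e_0$, at the cost of a factor $1+K\e_0$. The maximal hypotheses at $\s_\pm$ are $K_{17}$ times those at $s$, which is absorbed into $K_{18}$.

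For $|y|<1$ on the vertical sides I would use the subharmonicity of $\log K(t,z,z)$, or monotonicity in $y$ via Lemma \ref{lemAMon}, 3): the extremal function $F$ at $z$ satisfies $e^{ty'}|F|$ non-decreasing, so $K(t,x_0+iy/t)\leq K(t,x_0+i/t)e^{2}$. This is lossy. Better, I would note that $\S(t,z,z)$ is bounded below by a constant times $t$ when $|y|\leq1$, whereas $\S(1,iC_t,iC_t)t\sim e^{2C_t}t/C_t$ is huge. So a crude bound $wK\leq Kt$ already suffices there with room to spare, and I only need $wK\leq Ke^{2}\S(t,x_0+i/t,\cdot)$.

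\textbf{Kernel approximation.} I would expand
\[
\|K-\S/w\|^2_\mu=K(t,z,z)-\tfrac2w\Re\S(t,z,z)^{\,?}+\tfrac1{w^2}\|\S\|_\mu^2,
\]
using the reproducing property: $\langle\S(t,z,\cdot),K(t,z,\cdot)\rangle_\mu=\S(t,z,z)$, and this inner product is real. This gives
\[
\|K-\S/w\|^2_\mu=K(t,z,z)-\frac{2}{w}\S(t,z,z)+\frac{1}{w^2}\|\S(t,z,\cdot)\|^2_\mu .
\]
The first term is controlled by the norm bound. For the third, I write $\|\S\|_\mu^2=\int|\S|^2d\mu$ and use Lemma \ref{lemAPS} to majorize $|\S|^2/\S(t,z,z)$ by $8P_{z'}$ with $z'$ at height $(|y|+1)/t$. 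The part of $\mu$ near $s$ is then compared to $w\,dm$ via (M3), with $\mu-w(s)m$ controlled by $M_{50C/t}\mu(s)-w(s)\leq\e_0w$. More precisely, I would split $\mu=w\,m+\mu_s$ and compare $\int|\S|^2w\,dm$ with $w(s)\|\S\|^2_2=w(s)\S(t,z,z)$ using (M2), via Cauchy--Schwarz on $|G|^2-|G(s)|^2$; this produces the $\sqrt{\e_0}$ term. The positive excess $\int|\S|^2d\mu-\int|\S|^2w\,dm$ is bounded by $8\S\,(P\mu-Pw)(z')\leq K\e_0 w\S$, using (M3) together with (M1) to get $Pw(z')\geq w(s)(1-K\e_0)$. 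The three terms then combine to
\[
K+\S\bigl((1+K\sqrt{\e_0})/w-2/w\bigr)\leq\bigl(K\e+K\sqrt{\e_0}\bigr)\S/w .
\]

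\textbf{Main obstacle.} I expect the hardest step to be the near-real points on the vertical sides, where cone hypotheses fail and the Poisson majorization of Lemma \ref{lemAPS} requires $h\geq1$. My first attempt would absorb these points into the dominant $e^{2C_t}$ size of the right-hand side, as described above, since the target bound is only in terms of $\S(1,iC_t,iC_t)t$.
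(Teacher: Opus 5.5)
Your route is the paper's route. You apply Lemma \ref{lemAUp} at $s$ on the horizontal edges and at the good shifts $\s_\pm$ on the lateral edges for $|t\Im z|\ge1$, and use monotonicity (Lemma \ref{lemAMon}) below that height. You then use the reproducing-kernel expansion and the bound $\int|\S(t,z,\cdot)|^2d\mu\le(1+K\sqrt{\e_0})\,w(s)\,\S(t,z,z)$, obtained from Lemma \ref{lemAPS} and the splitting of $\mu$. Your Jensen/(M1) lower bound for $P[w\,dm](z')$ in the singular part is a harmless variant of the paper's Cauchy--Schwarz bound.

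The gap sits in the step you flagged yourself. Absorbing the segments $z=\s_\pm+iy/t$, $|y|<1$, into the size of $\S(1,iC_t,iC_t)\,t$ does prove the first inequality, whose right-hand side is at least $\S(1,iC_t,iC_t)\,t\gg Kt$ because $C_t>32$. It does not prove the second. On those segments you only have $w(s)K(t,z,z)\le K\S(t,z,z)\le Kt$. So your expansion yields $\|K(t,z,\cdot)-\S(t,z,\cdot)/w(s)\|_\mu^2\le Kt/w(s)$ and nothing better, while the target is $K(\e+\sqrt{\e_0})\,\S(1,iC_t,iC_t)\,t/w(s)$. Absorption would need $(\e+\sqrt{\e_0})\,\S(1,iC_t,iC_t)\gtrsim1$, i.e.\ roughly $\e\gtrsim C_te^{-2C_t}$, and that is not a hypothesis of the lemma.

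Your closing inequality $K(t,z,z)-\S/w\le K\e\,\S/w$ tacitly uses $w(s)K(t,z,z)\le(1+K\e)\,\S(t,z,z)$, which you have only for $|t\Im z|\ge1$. Below that height, given your bound on $\|\S(t,z,\cdot)\|_\mu^2$, this amounts to a quantitative upper norm universality near the real axis. Lemma \ref{lemAUp} does not give it: it needs $y\ge1$, and its error terms degenerate as $y\to0$. Section \ref{secU} itself describes this kind of near-real norm universality as having no quantitative proof.

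The paper's proof has the same hole. Its last display inserts the norm bound in the stated form $w(s)K(t,z,z)\le(1+K\e)\,\S(1,iC_t,iC_t)\,t$, which leaves $[\S(1,iC_t,iC_t)\,t-\S(t,z,z)]/w(s)$ uncontrolled on the lateral edges. The pointwise substitute exists only for $|t\Im z|\ge1$.

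This is not a formality. In Theorem \ref{thmAppMain} the lemma is used with $\e=\e_1=\e^2w^2(s)e^{-2K_{20}C}$, where $K_{20}\ge K_{19}$, and $e^{K_{19}C}$ has to dominate $e^{3C_t}/C_t$ in \eqref{eqAunif}. So (for $w(s)\le1$, say) $\e_1\S(1,iC_t,iC_t)\ll1$, and the absorption fails exactly where the lemma is applied.

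One consistent repair:
\begin{itemize}
\item assert the second inequality only for $|t\Im z|\ge1$, where your pointwise bounds do give it;
\item keep only the crude bound $\|\DD_z\|_\mu^2\le Kt/w(s)$ on the two short segments;
\item in Step 2 of that proof, replace the Cauchy-integral extension by the two-constants theorem.
\end{itemize}
The two segments have harmonic measure at most $1/(4\pi C)$ in $Q(s,C_t/t)$ as seen from $Q(s,C'/t)$. So they cost only a fixed power of $\e_1$ and a factor $e^{KC}$.
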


\begin{proof}
{\it The norm bound.} By the symmetry $K(t,\bar z,\bar z)=K(t,z,z)$ it suffices to take $z\in\pa Q\cap\bar\C_+$. Such $z$ lies either on the top edge, where $z\in\bar\G_s$ at height $C_t/t$, or on a lateral edge, where $z$ lies on the vertical segment over $\s_+$ or $\s_-$; in the latter case with $y=t\Im z\geq1$ the point is in $\bar\G_{\s_\pm}$ and Lemma \ref{lemAUp} applies at the base point $\s_\pm$ (its hypotheses hold by Lemma \ref{lemAShift} and $\e_0\leq\e^4/(K_{18}(1+C))$, since $1+y\leq1+48C$); for $y<1$, if $F$ is the maximizer at $z_1=\Re z+i/t$, monotonicity (Lemma \ref{lemAMon}) gives $|F(z)|\leq e|F(z_1)|$ while $\S(t,z,z)\geq c\,\S(t,z_1,z_1)$, so the bound at height $1$ transfers with an absolute loss. On the lateral edges the density at the base point is exchanged for $w(s)$ with a multiplicative loss of at most $e^{|\log w(\s_\pm)-\log w(s)|}\leq1+K\e_0$ by Lemma \ref{lemAShift}. Finally $\S(t,z,z)\leq\S(1,iC_t,iC_t)t$ on $\pa Q(s,C_t/t)$ since $\S(1,iy,iy)$ increases in $y$.

{\it The $L^2(\mu)$-bound.} Since $\S(t,z,\cdot)\in\PW_t$ and $K$ reproduces on $\PW_t$,
$$\begin{gathered}\left\|K(t,z,\cdot)-\frac{\S(t,z,\cdot)}{w(s)}\right\|^2_\mu
=\\ = K(t,z,z)-\frac{2\,\S(t,z,z)}{w(s)}+\frac1{w^2(s)}\int_\R|\S(t,z,x)|^2\,d\mu(x).\end{gathered}$$
We claim that
\begin{equation}
\int_\R|\S(t,z,\cdot)|^2\,d\mu\ \leq\ \left(1+K\sqrt{\e_0}\right)w(s)\,\S(t,z,z) .
\label{eqAsharp}
\end{equation}
Write $\phi=|\S(t,z,\cdot)|^2/\S(t,z,z)$, so $\int\phi\,dm=1$ and, by Lemma \ref{lemAPS} applied at the base point of $z$ (with $h=\max(t\Im z,1)$), $\phi\leq8P_{z'}$ for a point $z'$ lying in the truncated cone of $s$ or of $\s_\pm$. Decompose $d\mu=w(s)dm+(w-w(s))dm+d\mu_{\rm sing}$. The first part contributes exactly $w(s)$ to $\int\phi\,d\mu$. For the second, by Cauchy--Schwarz and $|w-w(s)|\leq|G-G(s)|\left(|G|+|G(s)|\right)$,
$$\begin{gathered}\int\phi\,|w-w(s)|\,dm\leq8\left(\int P_{z'}|G-G(s)|^2dm\right)^{1/2}\times\\
\times\left(\int P_{z'}\left(|G|+|G(s)|\right)^2dm\right)^{1/2}
\leq\ K\sqrt{\e_0\,w(s)}\cdot\sqrt{w(s)}\,,\end{gathered}$$
since $\int P_{z'}|G|^2dm\leq P\mu(z')\leq K w(s)$ and $\MM^2$-quantities at the base points are $\leq K\e_0w(s)$ by Lemma \ref{lemAShift}. For the singular part,
$$\int\phi\,d\mu_{\rm sing}\leq8P\mu_{\rm sing}(z')=8\left(P\mu(z')-P[w\,dm](z')\right)\leq K\left(\e_0+\sqrt{\e_0}\right)w(s),$$
using (M3) transported by Lemma \ref{lemAShift} together with the bound on $\int P_{z'}|w-w(s)|\,dm$ just obtained. Summing the three parts and multiplying back by $\S(t,z,z)$ gives \eqref{eqAsharp}. Inserting \eqref{eqAsharp} and the norm bound into the expansion,
$$\begin{gathered}\left\|K-\frac{\S}{w(s)}\right\|^2_\mu\leq\left[K(t,z,z)-\frac{\S(t,z,z)}{w(s)}\right]+\frac{K\sqrt{\e_0}}{w(s)}\S(t,z,z)
\leq\\ \leq\frac{K(\e+\sqrt{\e_0})}{w(s)}\,\S(1,iC_t,iC_t)\,t .\end{gathered}\qedhere$$
\end{proof}

\begin{proof}[Proof of Theorem \ref{thmAppMain}]
Let $\e_1\in(0,\tfrac14]$ be an auxiliary accuracy, to be chosen at the end, and assume (M1)--(M3) with $\e_0\leq\e_1^4/(K_{18}(1+C))$; note $\sqrt{\e_0}\leq\e_1$. For $\l\in\C$ put
$$\DD_\l(z)=K(t,\l,z)-\frac1{w(s)}\S(t,\l,z)\ \in\ \PW_t .$$

{\it Step 1: uniform bound on the box $Q(s,C_t/t)$ for boundary $\l$.} Let $\l\in\pa Q(s,C_t/t)$. The function $F_\l=e^{itz}G\,\DD_\l$ lies in the Smirnov class of $\C_+$ with
$$\int_\R|F_\l|^2dm=\int_\R w\,|\DD_\l|^2dm\leq\|\DD_\l\|^2_\mu\leq\frac{K\e_1}{w(s)}\,\S(1,iC_t,iC_t)\,t$$
by Lemma \ref{lemAL2}, hence $F_\l\in H^2(\C_+)$ with that norm bound. On the line $\Im z=C_t/t$, $|F_\l(z)|^2\leq\frac{t}{\pi C_t}\|F_\l\|^2_{H^2}$, $|e^{itz}|=e^{-C_t}$, and on the top edge of $Q(s,C_t/t)$ the outer function satisfies $|G|^2\geq(1-K\e_1)w(s)$ (by (M1) at the cone point, as in Lemma \ref{lemAUp}). Hence, on the top edge,
$$|\DD_\l(z)|^2\ \leq\ \frac{K\e_1\,e^{2C_t}}{w^2(s)}\cdot\frac{\S(1,iC_t,iC_t)}{C_t}\,t^2 .$$
Let $\DD_\l'$ be the reflection of $\DD_\l$ provided by Lemma \ref{lemAMon} 2). Since $|\DD_\l'|=|\DD_\l|$ on $\R$, the function $e^{itz}G\DD_\l'$ is again in $H^2$ with the same norm, so the top-edge bound holds for $\DD_\l'$ as well; by Lemma \ref{lemAMon} 1), $e^{ty'}|\DD_\l'(x+iy')|$ is non-decreasing in $y'$, so for every $z$ in the closed upper half of $Q(s,C_t/t)$,
$$|\DD_\l(z)|\leq|\DD_\l'(z)|\leq e^{C_t}\max_{\text{top edge}}|\DD_\l'|\ \leq\ \frac{K\sqrt{\e_1}\,e^{2C_t}}{w(s)}\sqrt{\frac{\S(1,iC_t,iC_t)}{C_t}}\ t .$$
The function $\DD_\l^\#(z)=\overline{\DD_\l(\bar z)}$ has the same modulus on $\R$ and the argument applied to it bounds $|\DD_\l|$ on the lower half of the box. Since $\S(1,iC_t,iC_t)\leq e^{2C_t}$ and $C_t\leq48C$,
\begin{equation}
\sup_{z\in Q(s,C_t/t)}|\DD_\l(z)|\ \leq\ \frac{\sqrt{\e_1}\ e^{K_{19}C}}{w(s)}\ t,\qquad K_{19}\ \text{absolute}.
\label{eqAunif}
\end{equation}

{\it Step 2: interior $\l$.} Let now $\l\in Q(s,C'/t)$ and $z\in\pa Q(s,C_t/t)$. By the Hermitian symmetry of the kernels, $|\DD_\l(z)|=|\DD_z(\l)|$, and \eqref{eqAunif} applies to $\DD_z$ at the interior point $\l$. Thus \eqref{eqAunif} holds for $|\DD_\l|$ on $\pa Q(s,C_t/t)$, and since $\DD_\l$ is entire in $z$, the Cauchy integral over $\pa Q(s,C_t/t)$ (contour length $\leq KC/t$, distance to $Q(s,C'/t)$ at least $C'/t$) extends it, with an absolute constant, to all $z\in Q(s,C'/t)$.

{\it Step 3: conclusion.} For $\l,z\in Q(s,C'/t)=Q(s,16C/t)$, $|\bar\l-z|\leq64C/t$, so
$$\left|{\mathcal D}(t,\l,z)-\frac{\sin[t(\bar\l-z)]}{w(s)}\right|=\pi|\bar\l-z|\,|\DD_\l(z)|\leq\frac{K\,C\sqrt{\e_1}\,e^{K_{19}C}}{w(s)}\leq\frac{\sqrt{\e_1}\,e^{K_{20}C}}{w(s)} .$$
Given the target accuracy $\e$ of (U2), choose $\e_1=\e^2w^2(s)e^{-2K_{20}C}$ (and at most $\tfrac14$); then the right-hand side is at most $\e$, and the hypothesis used above, $\e_0\leq\e_1^4/(K_{18}(1+C))$, is provided by \eqref{eqAthr} with $K_8$ large enough (the powers of $w(s)$ are absorbed by the factor $\min(w,\ti w,1)^{8}$ in \eqref{eqAthr}; we do not optimize the exponents). This proves (U2), and the same argument applies to the Dirichlet family, completing the proof.
\end{proof}


\newpage


\begin{thebibliography}{9}

    \bibitem{dBr}  {\sc De Branges, L.} {\it Hilbert spaces of entire functions.} Prentice-Hall,
Englewood Cliffs, NJ, 1968

\bibitem{G} {\sc Garnett, J.} {\it Bounded analytic functions.} Academic Press, New York, 1981

\bibitem{C} {\sc L. Carleson,} {\it On convergence and growth of partial sums of Fourier series,} Acta Math. 116 (1966), 135--157.

\bibitem{CK} {\sc M. Christ and A. Kiselev,} {\it Maximal functions associated to filtrations,} J. Funct. Anal. 179 (2001), no. 2, 409--425.

\bibitem{CK1} {\sc M. Christ and A. Kiselev,} {\it WKB asymptotic behavior of almost all generalized eigenfunctions of one-dimensional Schr\"odinger operators with slowly decaying potentials,} J. Funct. Anal. 179 (2001), 426--447.

\bibitem{Den} {\sc S. Denisov}, {\it Continuous analogs of polynomials orthogonal on the unit circle and Krein systems,}
International Mathematics Research Surveys, Volume 2006, 2006, 54517

\bibitem{Den2} {\sc S. Denisov}, {\it Pointwise behavior of the SU(1,1) non-linear Fourier transform,} preprint (2026)

\bibitem{MIF1} {\sc  Makarov, N.,  Poltoratski, A.} {\it Meromorphic inner functions, Toeplitz kernels, and the uncertainty principle,} in {\it Perspectives in Analysis}, Springer Verlag, Berlin, 2005, 185--252

\bibitem{MTT} {\sc C. Muscalu, T. Tao, and C. Thiele,} {\it A Carleson theorem for a Cantor group model of the scattering transform,} Nonlinearity 16 (2003), no. 1, 219--246.

\bibitem{Scatter}{\sc  Poltoratski, A.} {\it Pointwise convergence of the non-linear Fourier transform,} preprint, arXiv:2103.13349

\bibitem{R} {\sc C. Remling,} {\it
Spectral Theory of Canonical Systems}, De Gruyter Studies in Mathematics, 70, (2018).

\bibitem{Ro} {\sc R. Romanov,} {\it Canonical systems and de Branges spaces,} Lecture notes, (2014), arXiv:1408.6022

\bibitem{TT} {\sc T. Tao and C. Thiele,} {\it Nonlinear Fourier Analysis.} IAS/Park City Graduate Summer School. Unpublished
lecture notes (2003, 2012), available at arXiv:1201.5129.

\end{thebibliography}
\end{document}